\numberwithin{equation}{section}
\newtheorem{thm}{Theorem}[section]
\newtheorem{cor}[thm]{Corollary}
\newtheorem{lem}[thm]{Lemma}
\newtheorem{prop}[thm]{Proposition}
\def\Z{\mathbb{Z}}
\def\Q{\mathbb{Q}}
\def\R{\mathbb{R}}
\def\P{\mathbb{P}}
\def\E{\mathbb{E}}
\def\LL{\mathcal{L}}
\def\Spin{\mathsf S}
\renewcommand{\phi}{\varphi}
\renewcommand{\epsilon}{\varepsilon}
\newcommand{\1}{{\text{\Large $\mathfrak 1$}}}
\newcommand{\cov}{\operatorname{cov}}
\renewcommand{\liminf}{\varliminf}
\def\I{\textbf{I}}
\def\II{\textbf{I\!I}}
\def\dd{\bm d}
\def\ent{\operatorname{\mathsf{Ent}}}
\newcommand{\III}[1]{{\left\vert\kern-0.5ex\left\vert\kern-0.5ex\left\vert #1 
    \right\vert\kern-0.5ex\right\vert\kern-0.5ex\right\vert}}
\def\Z{\mathbb{Z}}
\def\Q{\mathbb{Q}}
\def\R{\mathbb{R}}
\def\P{\mathbb{P}}
\def\E{\mathbb{E}}
\def\LL{\mathcal{L}}
\newcommand{\He}{\mathbb{H}}
\def\@email#1#2{%
 \endgroup
 \patchcmd{\titleblock@produce}
  {\frontmatter@RRAPformat}
  {\frontmatter@RRAPformat{\produce@RRAP{*#1\href{mailto:#2}{#2}}}\frontmatter@RRAPformat}
  {}{}
}%
\begin{document}

\preprint{AIP/123-QED}

\title[The log-Sobolev  inequality for spin systems of higher order interactions.]{The log-Sobolev  inequality for spin systems of higher order interactions.}
\author{Takis Konstantopoulos}
\email{T.Konstantopoulos@liverpool.ac.uk}
\affiliation{ 
  Department of Mathematical Sciences,   University of Liverpool, Mathematical Sciences Building, Liverpool, L69 7ZL, United Kingdom..
}
\author{Ioannis Papageorgiou}
 \email{ papyannis@yahoo.com,  i.papageorgiou@ufabc.edu.br}
 \affiliation{     Centro de Matemática, Computação e Cognição (CMCC), Universidade Federal do ABC (UFABC), Avenida dos Estados, 5001 - Santo Andre - Sao Paulo, Brasil.
}%
\date{\today}

\begin{abstract}
We study the infinite-dimensional log-Sobolev inequality for spin systems
on  $\mathbb{Z}^d$ with interactions of power higher than quadratic.
We assume that the one site measure without a boundary $e^{-\phi(x)}dx/Z$    satisfies a log-Sobolev inequality and  we determine conditions  so that  the infinite-dimensional Gibbs measure also satisfies  the inequality.
As a concrete application, we prove that a
certain class of nontrivial Gibbs measures with non-quadratic
interaction potentials on an infinite product of Heisenberg groups satisfy the
log-Sobolev inequality.
\end{abstract}

\maketitle

\section{Introduction}

Coercive inequalities, like the logarithmic Sobolev, play an important role in the study of ergodic properties of stochastic systems.  The inequality is  associated with strong properties about the type and speed of convergence of Markov semigroups to invariant measures. In particular, in the field  of infinite dimensional  interacting spin systems, they provide a powerful tool in the examination of  the infinite volume Gibbs measures. In the current paper we give a first explicit description of spin systems with interactions that are higher than quadratic that satisfy the log-sobolev inequality,  and thus provide a first example in the bibliography of spin systems  with high order interactions that converge exponentially fast to equilibrium.

Our focus is on the  typical logarithmic Sobolev (abbreviated as log-Sobolev or LS) 
inequality  for probability measures governing systems of unbounded spins on  
the $d$-dimensional lattice $\mathbb{Z}^d$
with nearest neighbour interactions of order higher than 2.  
The aim of this paper is to investigate conditions on the local specification function
so that the inequality can be extended from the single-site interaction free  measure
to the infinite-dimensional Gibbs measure, assuming that the latter exists.
One crucial assumption  is that the single-site without interactions (consisting only of the phase) measure satisfies a  log-Sobolev inequality.  In addition, we assume that the power of the interaction is 
dominated by that of the phase. 
As an application, we show that the log-Sobolev inequality holds for the infinite Gibbs measure on spin systems 
with values in  the  Heisenberg group $\He_1$.

The single-site space will be denoted by $\Spin$ 
(colloquially, ``spins take values in $\Spin$'')
and $\Omega:={\Spin}^{\Z^d}$.
For $\Lambda$ a finite subset of $\Z^d$,
denote by $\P^{\Lambda,\omega}$ a probability measure on $\Spin^\Lambda$
that depends on the boundary conditions $\omega \in \Spin^{\partial \Lambda}$.
These probability measures (known as local specifications)
satisfy the usual spatial Markov property which imposes
sever restrictions on them, namely, they must, under natural
assumptions, be of  Gibbs type with a Hamiltonian that
can be split into two parts: the phases (depending on single
sites) and the interaction (depending on neighboring sites).
Denote by $\E^{\Lambda,\omega}$ integration with respect 
to $\P^{\Lambda,\omega}$;
and use the  convention that the former symbol be used in place of the
latter; see, e.g., Guionnet and Zegarlinski \cite{G-Z}.
Critiria for a measure (also a   local specification with quadratic interactions) to
satisfy a log-Sobolev inequality uniformly has been investigated by
Zegarlinski \cite{Z2},
Bakry and Emery \cite{B-E}, 
Yoshida \cite{Y}, An\'e {\em et al.} \cite{A-B-C},  
Bodineau and Helfer \cite{B-H}, Ledoux \cite{Led}, Helfer \cite{H} and  Baudoin and Bonnefont \cite{BAUDOIN20122646}.
Furthermore, in Gentil and Roberto \cite{G-R} the spectral gap inequality 
is proved, while     Gentil,  Guillin and  Miclo,  in 
 \cite{Ge-Gu-M-05} and   \cite{Ge-Gu-M-07},    Gozlan, Roberto and  Samson in  \cite{Go-Ro-Sa13}, Barki, Bobkov, Dagher and Roberto in \cite{B-B-D-R24} and Papageorgiou in \cite{Pa5}  studied  the modified log-sobolev inequality.  p-logarithmic Sololev inequalities have been studied by
	  Balogh,    Engulatov,  Hunziker and   Maasalo  in \cite{Ba-En-Hu-Ma12}, Balogh,   Don and  Kristaly in \cite{Ba-Do-Kr24} and Balogh,    Kristaly and   Tripaldi in \cite{BALOGH2024110217}. Lyapunov methods  have been used to prove log-Sobolev inequalities by Feng and Li \cite{Feng-Li23a}, \cite{Feng-Li23b}, while the relation of the inequality with the  logarithmic Schrödinger equation was studied by Read, Zegarlinski and Zhang \cite{Re-Ze-Zh22}. 
 
For the single-site measure  on the real line
with or without  boundary conditions 
necessary and sufficient  conditions in order that
the log-Sobolev inequality be satisfied uniformly over the boundary
conditions $\omega$ are presented in  Bobkov and G\"otze \cite{B-G}, 
Bobkov and Zegarlinski \cite{B-Z} and Roberto and Zegarlinski \cite{R-Z}.

The log-Sobolev inequality for the infinite-dimensional Gibbs measure on the
lattice is examined in Guionnet and Zegarlinski \cite{G-Z}, 
and Zegarlinski \cite{Z1}, \cite{Z2}. 
The problem of passing from single-site to infinite-dimensional measure,
in presence of quadratic interactions,
is addressed by Marton \cite{M1}, Inglis and Papageorgiou \cite{I-P}, 
Otto and Reznikoff \cite{O-R} and Papageorgiou \cite{Pa3}.

Working beyond the case of quadratic interactions is the scope of
this paper.
Non-quadratic interactions have been considered in \cite{Pa2}, but for the
case of the one-dimensional lattice and the stronger log-Sobolev $q$-inequality.
In that paper, the inequality for the infinite-dimensional Gibbs measure 
was related to the inequality for the
finite projection of the Gibbs measure. In \cite{I-P1} conditions have been investigated so that the infinite dimensional Gibbs measure satisfies the inequality under the main assumption that the single-site measure satisfies a log-Sobolev inequality uniformly on the boundary conditions. Under the same framework, concentration properties have been studied in \cite{Pa4}. Recentely, Limmer, Kratsios, Yang, Saqur and Horvath in \cite{LKYSH24} proved convergence properties under the assumption of log-sobolev inequalities.

The scope of the current paper is to  prove the log-Sobolev inequality for the Gibbs measure without setting conditions  neither on the local specification $\{\E^{\Lambda,\omega}\}$ nor on the one site measure $\mathbb{E}^{\{i\},\omega}$. What we actually show is that under appropriate conditions on the interactions,  the Gibbs measure  satisfies a log-Sobolev inequality whenever the boundary free one site measure $\mu(dx)=e^{-\phi(x)}dx /(\int e^{-\phi(x)} dx)$  satisfies a log-Sobolev inequality.
 In that way we improve the previous results since the log-Sobolev inequality is determined alone by the phase  $\phi$ of the simple without interactions measure $\mu$ on $M$, for which a plethora of criteria and examples of good measure that satisfy the inequality exist.

To explain the applicability of our general infinite-dimensional framework the
specific case of the Heisenberg group is presented. This will serve as a
specific example (see Theorem \ref{thmExample}) derived from the more general
result of Theorem \ref{thmGENERAL}.

\subsection{General framework}
Consider the $d$-dimensional integer lattice $\Z^d$ equipped with 
the standard neighborhood
structure: two lattice points (sites) $i, j \in \Z^d$ are neighbors (write $i \sim j$)
if $\sum_{1\le k \le d} |i_k-j_k|=1$. We shall be working with the configuration
space $\Omega = \Spin^{\Z^d}$ where $\Spin$ is an appropriate ``spin space''. We consider the spin space $\Spin$ to be a group, and we denote $\cdot$ the group operation and  $x^{-1}$ the inverse of $x\in \Spin$ in respect to the group operation.
The coordinate $\omega_i$ of a configuration $\omega \in \Omega$ is referred to
as the spin at site $i$; $\omega_i$ takes values in $\Spin^i \equiv \Spin$.
When $\Lambda \subset \Z^d$ we identify $\Spin^\Lambda$
with the Cartesian product of the $\Spin^i$ when $i$ ranges over $\Lambda$.
We assume that $\Spin$ comes with a natural measure; for example,
when $\Spin$ is a group then the measure is one which is invariant under
the group operation; we write $dx_i$ for this measure on the copy
$\Spin^i$ of $\Spin$ corresponding to site $i \in \Z^d$; 
and we use the symbol $dx_\Lambda$ for a product measure, that is,
the product of the $dx_i$, $i \in \Lambda$.    
It is assumed that $\E^{\{i\},\omega}$ is absolutely continuous
with respect to $dx_i$. The Markov property implies then that,
for finite subsets $\Lambda$ of $\Z^d$, the probability measures
$\E^{\Lambda, \omega}$ should be of a very special form (see \cite{Pr}):
\[
\E^{\Lambda, \omega} (dx_\Lambda) = \frac{1}{Z^{\Lambda,\omega}}\,
e^{-H^{\Lambda,\omega}(x_\Lambda)}\, dx_\Lambda,
\]
where $Z^{\Lambda,\omega}$ is a normalization constant
and where the function $H^{\Lambda, \omega}$ (the Hamiltonian) is of 
the form
\[
H^{\Lambda,\omega}(x_\Lambda) 
:= \sum_{i \in \Lambda} \phi(x_i) 
+ \sum_{i,j \in \Lambda,\, j \sim i} J_{ij} V(x_i, x_j)
+ \sum_{i\in \Lambda,j \in \partial \Lambda,\, j \sim i} J_{ij} V(x_i, \omega_j),
\]
the sum of the phase and the interactions.

It is implicitly assumed that the normalization constants are finite.
Several conventions are tacitly used in this business. 
When $f$ is a function from $\Spin^{\Z^d}$ into $\R$,
we let $\E^{\Lambda, \omega} f$ for the function
on $\Spin^{\Z^d}$ obtained by integrating $f$ with respect
to $dx_\Lambda$ and by substituting $x_{\partial \Lambda}$ by $\omega$,
while leaving all other coordinates the same.
When we simply write $\E^\Lambda f$ we shall understand this as 
above with $\omega = x_{\partial \Lambda}$. 
Thus, $\E^\Lambda$ can be thought of as a linear operator  that
takes functions on the whole of $\Spin^{\Z^d}$ to functions
that do not depend on the variables $x_i, i \in \Lambda$.
Similarly, we will write $H^\Lambda$ for the Hamiltonian  $H^{\Lambda,\omega}$.  If $\Lambda$ is an  infinite subset of $\Z^d$ with the property
that any two points in $\Lambda$ are at lattice distance strictly
greater than $1$ from one another then $\E^{\Lambda, \omega}$
is the product of $\E^{\{i\}, \omega_{\partial\{i\}}}$.
Using these conventions, the spatial Markov property can then 
be expressed as
\[
\E^\Lambda \E^K = \E^\Lambda, \quad K \subset \Lambda.
\]
The Markov property written in this way, following the conventions above,
carries a lot of weight: in particular, it entails  that the law
of $x_K$ given $x_{\partial \Lambda}$ is the
law of $x_K$ given $x_{\partial K}$ integrated over
$x_{\partial K}$ when the later has the law obtained from $\P^{\Lambda}$.
This Markov property can, naturally,   be seen to be 
equivalent to the usual Markov property for Markov
processes indexed by the one-dimensional lattice $\Z$ (which
is often interpreted as ``time'' in view of the natural
total order of $\Z$.)

We say that the probability measure $\nu$ on $\Omega = \Spin^{\Z^d}$
is an infinite volume Gibbs measure for  the local
specifications $\{\E^{\Lambda,\omega}\}$ if
the Dobrushin-Lanford-Ruelle equations are satisfied:
\[
\nu \mathbb{E}^{\Lambda,\bullet}=\nu, \quad \Lambda \Subset \Z^d,
\]
that is, if $\nu$ is an invariant measure for the Markov
random field.
We  refer to Preston \cite{Pr}, Dobrushin \cite{D} and  Bellisard and   Hoegn-Krohn \cite{B-HK} for details.
Throughout the paper we shall assume that we are in the case
where $\nu$ exists and is unique (although uniqueness can be
deduced from our main results).

We next make some assumptions about the nature of 
the spin space $\Spin$. 

We shall assume that $\Spin$ is a nilpotent  Lie group on
$\R^d$ with H\"ormander system $X^1, \ldots, X^n$, $n \le d$,
satisfying the following relation:
if $X^k = \sum_{j=1}^d a_{kj} \frac{\partial}{\partial x_j}$,
$k=1,\ldots, n$, then $a_{kj}$ is a function of $x \in \R^d$
not depending on the $j$-th coordinate $x_j$; that is,
if $x, y \in \R^d$ have $x_j=y_j$ then $a_{kj}(x)=a_{kj}(y)$.
The gradient $\nabla$ with respect to 
this system is the vector operator $\nabla f = (X^1 f, \ldots, X^n f)$,
whereas $\Delta = (X^1)^2 + \cdots + (X^n)^2$ is the
sublaplacian, where $(X^k)^2 f = X^k(X^k f)$. 
We let $\|\nabla f\|^2 := (X^1 f)^2 + \cdots + (X^n f)^2$ (for general hypocoercive-typeoperators type generators see Kontis, Ottobre   and  Zegarlinski  \cite{KONTIS1750015} and  \cite{KONTIS20163173}, as well as Lugiewicz and Zegarlinski \cite{Lugiewicz2007CoerciveIF}).
When these operators act
on functions on the spin space $\Spin^i$ at site $i \in \Z^d$ 
they will be denoted by $\nabla_i$ and $\Delta_i$, respectively. 
If $\Lambda$ is a finite subset of  $\Z^d$ we shall let 
$\nabla_\Lambda := (\nabla_i, i \in \Lambda)$ and 
$\|\nabla_\Lambda f\|^2 := \sum_{i \in \Lambda} \|\nabla_i f\|^2$.
We shall assume that $\Spin$ comes equipped with a metric-like
function $\dd(x,y)$, $x,y \in \Spin$.
For example, if $\Spin$ is a Euclidean space then $\dd$ is the Euclidean
metric. If $\Spin$ is the Heisenberg group, 
then $\dd$ is the Carnot-Carath\'eodory metric. 
More generally, the role of $\dd$ only appears through
the assumptions we make.

In each and every case, the notation $\dd(x)$, for $x \in \Spin$, stands
for $\dd(x, 0)$, where $0$ is a special point of $\Spin$, for
example the origin if $\Spin$ is $\R^m$ or the identity element
if $\Spin$ is a Lie group.

The main assumption of the paper is that  the single site measure without interactions  (consisting only of the phase) 
$$\mu(dx)=\frac{e^{-\phi(x)}dx}{\int e^{-\phi(x)} dx}$$  
satisfies the log-Sobolev inequality, that is,
that there exists $c>0$ such that
\[
\mu\bigg( f^2 \log \frac{f^2}{\mu f^2}
\bigg) \le c\, \mu \|\nabla f\|^2
\]
for any smooth function $f: \Spin \mathsf \to \R$ such  that
 both sides make sense.

When the last inequality holds for $\E^{\Lambda,\omega}$ in the place of $\mu$ for the constant $c$ uniformly on the boundary conditions $\omega$,  we say  that the log-Sobolev inequality holds for
$\E^{\Lambda,\omega}$ {\em uniformly} (in $\omega$.)

We point out that when two measures satisfy the log-Sobolev inequality then their product also satisfies the inequality.   Similar thing is also true for 
spectral gap inequalities (a measure $\mu$ satisfies spectral gap inequality
with constant $C$ if
$\mu |f-\mu f|^2 \le C\, \mu |\nabla f|^2$).

Proofs of these assertions can be found in  
Gross \cite{G}, Guionnet and Zegarlinski \cite{G-Z} and 
Bobkov and Zegarlinski \cite{B-Z}. 
In that way, if for every $i\in \Lambda$, $\E^{\{ i \},\omega}$ satisfies the log-Sobolev (similarly the Spectral gap) inequality uniformly and  $\Lambda$ is a subset (finite or infinite) of $\Z^d$ such that
any two points of $\Lambda$ are at lattice distance strictly greater than one from
one another, then the log-Sobolev (similarly spectral gap) inequality holds for
$\E^{\Lambda,\omega}$, with the same constant $c$, uniformly
in $\omega \in \partial \Lambda$.

\subsection{The Heisenberg group}\label{introHeis}
The Heisenberg group $\He_1$ can be identified with 
$\mathbb{R}^3$ equipped with the group operation
$$
x\cdot\tilde{x} = 
\big(x_1 + \tilde{x}_1, x_2 + \tilde{x}_2, x_3 + \tilde{x}_3 
+ \frac{1}{2}(x_1\tilde{x}_2 - x_2\tilde{x}_1)\big).
$$
It is a Lie group with Lie algebra 
which can be identified with the space of left-invariant vector 
fields on $\He_1$ in the standard way.  
See, e.g., \cite{B-L-U}.
By direct computation, the
vector fields
\begin{eqnarray}
X_1 &=& \partial_{x_1} - \frac{1}{2}x_2\partial_{x_3} \nonumber \\
X_2 &=& \partial_{x_2} + \frac{1}{2}x_1\partial_{x_3} \nonumber \\
X_3 &=& \partial_{x_3} = [X_1, X_2] \nonumber,
\end{eqnarray}
where $\partial_{x_i}$ denoted derivation with   respect to $x_i$, form a Jacobian basis.
From this it is clear that $X_1, X_2$ satisfy the H\"ormander  condition 
(i.e., $X_1, X_2$ and their commutator $[X_1, X_2]$ span the tangent space 
 at  every point of $\He_1$).   It is also 
easy to check that the left-invariant Haar measure 
(being also right-invariant measure owing to the fact that 
the group is nilpotent) 
is the Lebesgue measure on $\R^3$.

The gradient  is given by
$
\nabla := (X_1,X_2),
$
and  the \textit{sub-Laplacian}  by 
$
\Delta := X_1^2 + X_2^2.
$ 
A probability measure $\mu$ on $\He_1$
satisfies a log-Sobolev inequality if there exists
a positive constant $c$ such that

\begin{equation*}
\mu\left(
f^{2}\log\frac{f^{2}}{\mu f^{2}}\right) \leq c\, \mu \|\nabla f\|^2= c \mu \left((X_1f)^2+(X_2 f)^2\right),
\end{equation*}
for all smooth  functions $f: \He_1 \to \R$. Here, $\mu(g)$, or,
simply, $\mu g$
stands for $\int_{\He_1} g \, d\mu$.
The quantity on the left-hand  side is the $\mu$-entropy
of the function $f^2$ or, equivalently, the Kullback-Leibler
divergence between the measure $f^2 d\mu$ and $\mu$.
For example, the family of measures
\begin{align}
\label{introUb2}
\mu_p (dx) 
:= \frac{e^{-\beta \dd(x,e)^p}}{\int_{\He_1} e^{-\beta \dd(x,e)^p}dx}\,dx,
\end{align}
where $p\geq2$, $\beta >0$,
and $\dd(x,e)$ is the \textit{Carnot-Carath\'eodory distance}
of the point $x \in \He_1$ from the identity element $e$ of $\He_1$,
all satisfy a log-Sobolev inequality; this was shown by Hebisch and Zegarlinski in \cite{H-Z}.  For the inequality on Lie groups see  also Feng and Li \cite{Fe-Li1}, Gordina and Luo \cite{Gor-Luo2022},  Chatzakou,  Kassymov and  Ruzhansky  \cite{Ch-Ka-Ru15652}, Suguro \cite{Sug24} and Bonnefont,    Chafaï and  Herry \cite{Bo-Ch-He2020}. Entropy dissipation  in the Heisenberg group has been studied by  Feng and   Li in \cite{Fe-Li}.

We briefly recall the   notion of the Carnot-Carath\'eodory metric on $\He$.

A Lipschitz curve
$\gamma:[0,1]\to\He$ is said to be \textit{admissible} if $\gamma'(s) =
a_1(s)X_1(\gamma(s)) + a_2(s)X_2(\gamma(s))$, a.e., for 
given measurable functions $a_1(s)$, $a_2(s)$,
and has length
$
l(\gamma) = \int_0^1\left(a_1^2(s) + a_2^2(s)\right)^{1/2}ds.
$
The Carnot-Carath\'eodory metric is then defined by
$$
\dd(x,y) := \inf\{l(\gamma):\gamma\ \textrm{is an admissible path joining}\ x\
\textrm{and}\ y\}.
$$

We also have that $x=(x_1, x_2, x_3)\mapsto \dd(x,e)$ is smooth 
for $(x_1, x_2) \neq 0$, but has singularities at points of the form
$(0,0, x_3)$. 
 Thus, the unit ball in the metric above has singularities on the $x_3$-axis.
In our analysis, we will  use the following result about the
Carnot-Carath\'eodory distance (see, for example, \cite{H-Z}, \cite{Mo}).

\begin{prop}
\label{eik}
Let $\nabla$ be the  gradient and $\Delta$ be the sub-Laplacian on $\He_1$.
Then $ \|\nabla \dd(x,e) \|=1$ for all $x=(x_1, x_2, x_3)\in\He$ such that
$(x_1, x_2)\neq0$. Also there exists a positive constant $K$ such  that $\Delta
\dd(x,e)<K/\dd(x,e)$ in the sense of distributions.
\end{prop}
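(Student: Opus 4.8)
The plan is to treat the two assertions separately, beginning with the eikonal identity $\|\nabla\dd(x,e)\|=1$ on the smooth locus $\{(x_1,x_2)\neq0\}$. For the upper bound I would use that $x\mapsto\dd(x,e)$ is itself $1$-Lipschitz for the Carnot--Carath\'eodory metric, by the triangle inequality $|\dd(x,e)-\dd(y,e)|\le\dd(x,y)$. Since that metric is a length metric whose lengths are measured along horizontal curves, a smooth function is $1$-Lipschitz precisely when its horizontal gradient has norm at most $1$ everywhere (test against integral curves of horizontal unit vectors), so $\|\nabla\dd(x,e)\|\le1$ wherever $\dd(\cdot,e)$ is smooth. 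For the reverse inequality I would fix $x$ with $(x_1,x_2)\neq0$ and take the minimizing horizontal geodesic $\gamma\colon[0,\dd(x,e)]\to\He_1$ from $e$ to $x$, parametrized by arc length, so that $\gamma'(s)$ is horizontal of unit length and $\dd(\gamma(s),e)=s$; differentiating this identity and applying Cauchy--Schwarz (in the horizontal inner product) gives $1=\frac{d}{ds}\dd(\gamma(s),e)=\nabla\dd(\gamma(s))\cdot\gamma'(s)\le\|\nabla\dd(\gamma(s))\|$, and evaluating at $s=\dd(x,e)$ yields $\|\nabla\dd(x,e)\|\ge1$.

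For the bound on $\Delta\dd$ I would first exploit the anisotropic dilations $\delta_\lambda(x_1,x_2,x_3)=(\lambda x_1,\lambda x_2,\lambda^2x_3)$. A direct computation gives $X_i(g\circ\delta_\lambda)=\lambda\,(X_ig)\circ\delta_\lambda$ for $i=1,2$, hence $\Delta(g\circ\delta_\lambda)=\lambda^2(\Delta g)\circ\delta_\lambda$; since $\dd(\cdot,e)$ is homogeneous of degree $1$, it follows that $\Delta\dd(\cdot,e)$ is homogeneous of degree $-1$, so that the product $\dd(\cdot,e)\,\Delta\dd(\cdot,e)$ is homogeneous of degree $0$ and thus determined by its restriction to the unit sphere $S=\{\dd(\cdot,e)=1\}$. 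On $S$ away from its two poles on the $x_3$-axis this product is smooth, and the task reduces to bounding it from above near those poles; here the explicit description of the Heisenberg geodesics (cf.\ \cite{H-Z}, \cite{Mo}) is used, the singularity of $\dd(\cdot,e)$ across the axis contributing to $\Delta\dd(\cdot,e)$ with the favorable negative sign. Taking $K:=\sup_S\dd(\cdot,e)\,\Delta\dd(\cdot,e)$, which is then finite, gives the pointwise inequality $\Delta\dd(x,e)\le K/\dd(x,e)$ on $\{(x_1,x_2)\neq0\}$.

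Finally I would promote this to a distributional statement, the point being to exclude a positive singular contribution on the $x_3$-axis. One checks that $X_1$ and $X_2$ are divergence free for Lebesgue (equivalently Haar) measure, so $\Delta$ is self-adjoint and the assertion becomes $\int\dd(\cdot,e)\,\Delta\psi\,dx\le K\int\psi/\dd(\cdot,e)\,dx$ for every nonnegative $\psi\in C_c^\infty(\He_1)$. I would obtain this by excising the $\epsilon$-tube $\{x_1^2+x_2^2\le\epsilon^2\}$ about the axis and integrating by parts twice on its complement: the interior term converges to $\int(\Delta\dd)\,\psi\,dx\le K\int\psi/\dd(\cdot,e)\,dx$ by the previous paragraph, while the boundary terms over the tube are handled using the continuity of $\dd(\cdot,e)$ and the sign of its transverse horizontal derivative. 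The main obstacle is exactly this last point---showing that the boundary contribution is nonpositive as $\epsilon\to0$, i.e.\ that the cone-type singularity of $\dd(\cdot,e)$ across the axis generates no positive distributional mass---and it is here that one genuinely needs the fine asymptotics of the distance function near the $x_3$-axis.
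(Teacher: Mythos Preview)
The paper does not prove Proposition~\ref{eik}; it simply quotes it as a known fact with references to \cite{H-Z} and \cite{Mo}. So there is no ``paper's own proof'' to compare against, and your task was really to supply what the paper outsources.

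Your outline is the standard one and is sound in its architecture. The eikonal part is essentially complete: the $1$-Lipschitz upper bound plus the geodesic lower bound is exactly the right argument, and nothing is missing there. For the sub-Laplacian bound, the homogeneity reduction to the unit Carnot--Carath\'eodory sphere is also the correct move and is how \cite{H-Z} and \cite{Mo} proceed. What you have written, however, is honest about its own incompleteness: you defer the analysis near the poles of $S$ to ``the explicit description of the Heisenberg geodesics,'' and you flag the boundary-term control in the distributional upgrade as ``the main obstacle.'' Those are precisely the two places where real work is required, and you have not done that work---you have located it and pointed to the same references the paper cites.

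So the proposal is not wrong, but it is a roadmap rather than a proof. If you want to close it you must actually carry out the local expansion of $\dd(\cdot,e)$ near the $x_3$-axis (where, in suitable coordinates, $\dd$ behaves like $\sqrt{4\pi|x_3|}$ plus smooth corrections) and verify both that $\dd\,\Delta\dd$ stays bounded above and that the excised-tube boundary terms have the right sign in the limit. Short of that, what you have matches the paper's stance: state the result and cite \cite{H-Z}, \cite{Mo}.
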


\section{Assumptions and main results }

In this section we present the  hypothesis and the statement of the main result.  Without loss of generality, assume the single-site space to be the origin $0 \in \Z^d$. Let $\Spin$ be the corresponding spin space. To ease the notation,
we denote the Hamiltonian by
\[
H(x) = \phi(x) + \sum_{j=-d}^{d}J_{j} V_j(x), \quad x\in \Spin,
\]
where $e_j \in \Z^d$ is the vector with components
$e_{j,i} = \1_{i=j}$ and $V_{\pm j}(x) := V(x, \omega_{\pm e_j})$,
$j=1,\ldots,d$. In other words, we freeze the boundary conditions
$\omega_{-e_d}, \ldots, \omega_{e_d}$ at the $2d$ neighbors 
$\pm e_1, \ldots, \pm e_d$ of the origin.
Of course, we need to assume that the functions $\phi$ and $V_j$
are such that $\int_\Spin \exp(-H(x)) dx < \infty$ so that
the measure with density $\exp(-H(x))$ be normalizable to
a probability measure which (again suppressing the $\omega$) we
simply denote as $\E$:
\[
\E(dx) = Z^{-1} e^{-H(x)} dx.
\]
Before stating the main results, we introduce a number of natural
hypotheses. 

~

\textbf{The main assumption}$~$

 The single site measure without interactions  (consisting only of the phase) 
$$\mu(dx)=\frac{e^{-\phi(x)}dx}{\int e^{-\phi(x)} dx}$$  
 satisfies
the log-Sobolev inequality with a constant $c$.

~

\textbf{Assumptions on the  phase and the interaction potential}$~$

We also assume that   $J_j>0$ and  that $\phi$ and the $V_j$ are non negative twice  continuously
differentiable satisfying the following ``geometric'' conditions:
there exists a nonnegative 
$C^2$ function $\phi_1$ 
such that
\begin{equation}
\label{H2.1}
\nabla \phi = \phi_1 \nabla \dd .
\end{equation}
Similarly, for each $V_j$:
\begin{equation}
\label{H2.2}
\nabla V_j=  U_j \nabla \dd,\end{equation}
where $U_j$ are nonnegative $C^2$ functions.
 The gradient vector $\nabla \dd$ is uniformly bounded in magnitude from above and below:
there exist constants $\tau$ and $\xi$ such that, for all
$x \in \Spin$,
\begin{equation}
\label{H2.3}
\xi \le \|\nabla \dd\| \le \tau .
\end{equation}
Instead of speaking of a metric $\dd$, we shall, for the purposes of
this section, speak of positive functions $\dd$,
such that there exists a constant $\theta$ with
\begin{equation}
\label{H2.4}
|\Delta \dd| \le \frac{\theta}{\dd},
\end{equation}
for all $j$ and all $x$.
Moreover, we require that there exists $k_0 >0$ and 
$ p \ge 2$ 
such that
\begin{equation}
\label{H2.5}
  k_0 \phi\le d\phi_1 \ \text{and} \ d^p\leq \phi  
\end{equation}
and
\begin{equation}
\label{H2.6}
k_0V_j\leq \dd U_{j},
\end{equation}
for all $j$ and $x$.
 Furthermore,  we assume 
 \begin{equation}
\label{Hinfty}
 V_j \rightarrow +\infty \text{ \ as  \ }  \dd (\omega_{ e_j})\rightarrow +\infty
\end{equation}
and that  
 $\exists ~ s \le p$  and $k>0$ such that
\begin{equation}
\label{H2.8}
\| \nabla V_j \|^2 \le k + k \dd^s + k \dd^s(\omega_{ e_j}),
\end{equation}
\begin{equation}
\label{H2.9}
 V_j \le k + k \dd^s + k \dd^s(\omega_{ e_j}).
\end{equation}
Three last  assumptions follow.  These, as shown in section \ref{sectExample1},  are natural assumptions that are easily verified for Hamiltonians that are given as functions of $\dd$.  For any $x,y\in \Spin$ we assume that there exists a $\lambda>1$ such that
\begin{equation}
\label{lowerH}
H(x \cdot y)  \le  \lambda H(x)+\lambda H (y)
\end{equation}
where $\cdot$ the group operation,  while for $x^{-1}$ the inverse of $x$ in respect to the group operation,
\begin{equation}
\label{Hinverse}
H(x^{-1})= H (x)
.\end{equation}
If we consider $\gamma:[0,t]\rightarrow \Spin $ a geodesic from $0$ to $x \in \Spin$ then
\begin{equation}
\label{Hgeo}
H(\gamma (s))\leq H (x)
\end{equation} for every $s\in [0,t]$.

We can now state the main theorem related to the general framework.

\begin{thm}
\label{thmGENERAL} 
Let $f\colon \mathbb{\Spin}^{\mathbb{Z}^d} \to \mathbb{R}$. 
If assumptions (\ref{H2.1})-(\ref{Hgeo}) hold,
if the single-site measure $\mu$ satisfies
a log-Sobolev inequality, 
 then the Gibbs measure
$\nu$ satisfies a log-Sobolev inequality:
\[
\nu f^{2}\log\frac{f^{2}}{\nu
f^{2}}\leq \mathfrak{C} \ \nu \left\| \nabla f
\right\|^2,
\]
for some positive constant $\mathfrak{C}$. 
\end{thm}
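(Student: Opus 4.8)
The plan is to follow the two-stage strategy that is standard for passing from a single-site to an infinite-volume log-Sobolev inequality (as in Zegarlinski \cite{Z2} and Guionnet--Zegarlinski \cite{G-Z}): first establish a log-Sobolev inequality for the finite-volume single-site measure $\E=\E^{\{0\},\omega}$ \emph{uniformly} in the boundary $\omega$, and then feed this into a sweeping-out (telescoping) argument to reach $\nu$. The genuinely new difficulty throughout is that the interaction is non-quadratic, so its Hessian is unbounded and none of the bounded-perturbation or Bakry--Emery \cite{B-E} shortcuts apply; the hypotheses (\ref{H2.1})--(\ref{Hgeo}) are precisely what is needed to replace those shortcuts by $U$-bound estimates in the spirit of Hebisch--Zegarlinski \cite{H-Z}.

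\textbf{Step 1: single-site $U$-bound.} Writing $\E(dx)=Z^{-1}e^{-H(x)}\,dx$ with $H=\phi+\sum_j J_jV_j$, assumptions (\ref{H2.1})--(\ref{H2.2}) give $\nabla H=(\phi_1+\sum_jJ_jU_j)\nabla\dd$, so that by (\ref{H2.3}) one has $\|\nabla H\|^2\ge\xi^2(\phi_1+\sum_jJ_jU_j)^2$, while (\ref{H2.4}) controls $\Delta\dd$ and hence $\Delta H$. Integrating by parts against $e^{-H}$ (the H\"ormander fields being divergence-free) yields the identity
\[
\E\!\left(f^2\|\nabla H\|^2\right)=\E\!\left(f^2\Delta H\right)+2\,\E\!\left(f\,\nabla f\cdot\nabla H\right),
\]
and a Young inequality absorbs the last term to produce $\E(f^2(\|\nabla H\|^2-2\Delta H))\le 4\,\E\|\nabla f\|^2$. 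Using (\ref{H2.5}) ($\dd^p\le\phi$ and $k_0\phi\le\dd\phi_1$, whence $\phi_1\gtrsim\dd^{\,p-1}$) together with (\ref{H2.6}), (\ref{H2.8})--(\ref{H2.9}) and $s\le p$ to check that $\|\nabla H\|^2-2\Delta H\ge\delta\,\dd^{\,2(p-1)}-M$ with $\delta,M$ independent of $\omega$, this gives the uniform $U$-bound
\[
\E\!\left(f^2\,\dd^{\,2(p-1)}\right)\le C\,\E\|\nabla f\|^2+C\,\E(f^2).
\]

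\textbf{Step 2: single-site log-Sobolev, uniformly in $\omega$.} On any ball the interaction $\sum_jJ_jV_j$ is bounded, so the Holley--Stroock perturbation of the assumed log-Sobolev inequality for $\mu$ gives a \emph{local} log-Sobolev inequality for $\E$ with $\omega$-independent constant. The Hebisch--Zegarlinski mechanism then upgrades the local inequality plus the $U$-bound of Step 1 into a global log-Sobolev inequality for $\E^{\{0\},\omega}$, uniformly in $\omega$. It is exactly here that the group hypotheses (\ref{lowerH})--(\ref{Hgeo}) are used: subadditivity of $H$ under the group law, the symmetry $H(x^{-1})=H(x)$, and monotonicity of $H$ along Carnot--Carath\'eodory geodesics are what make the argument go through on $\He_1$ despite the singularity of $\dd$ on the $x_3$-axis. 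Since sites in a sublattice $L$ whose points are pairwise at lattice distance $>1$ decouple, $\E^{L,\omega}$ then satisfies log-Sobolev with the same constant, uniformly in $\omega$.

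\textbf{Step 3: sweeping out to $\nu$, and the main obstacle.} Colour $\Z^d$ with finitely many classes $L_1,\dots,L_m$, each with non-adjacent sites, and use the DLR relations $\nu\E^{L_r}=\nu$ to write $\ent_\nu(f^2)$ as a telescoping sum of conditional entropies, each bounded by the uniform single-site inequality of Step 2. The residual coupling between colours is governed by terms of the form $\|\nabla_j\E^{\{i\},\omega}f\|$ with $i\sim j$, and controlling these is the crux of the proof. The covariance identity
\[
\nabla_j\,\E^{\{i\}}f=\E^{\{i\}}\nabla_jf-J_j\,\cov_{\E^{\{i\}}}\!\left(f,\nabla_jV\right)
\]
reduces the matter to a covariance estimate; applying the single-site log-Sobolev (hence spectral gap) inequality together with the $U$-bound to absorb the unbounded factor $\nabla_jV$ (admissible because (\ref{H2.8}) gives $\|\nabla V_j\|^2\lesssim\dd^{\,s}$ with $s\le p$, controlled by the $U$-bound weight $\dd^{\,2(p-1)}$ since $s\le p\le 2(p-1)$ for $p\ge2$) produces a contraction estimate whose coefficient involves the spectral gap and the $U$-bound constants. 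I expect this step to be the main obstacle: in the quadratic case the analogous bound is immediate from a Hessian estimate, whereas here one must route the covariance bound through the $U$-bound and exploit the domination $s\le 2(p-1)$ coming from (\ref{H2.5})--(\ref{H2.9}) to keep the cross-terms from dominating, and then verify that the resulting series over the sweeping-out iteration is summable. Summing that series yields a finite constant $\mathfrak{C}$ and the claimed log-Sobolev inequality for $\nu$.
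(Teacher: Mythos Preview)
Your Step 2 contains a genuine gap. You assert that on any ball the interaction $\sum_j J_jV_j$ is bounded, so Holley--Stroock yields an $\omega$-independent local log-Sobolev constant. But the interaction depends on the boundary: in the model example $V(x,\omega_j)=(\dd(x)+\dd(\omega_j))^r$, the oscillation of $V$ over $\{\dd(x)\le R\}$ is of order $R\,\dd(\omega_j)^{r-1}$, which is unbounded in $\omega$. Holley--Stroock therefore produces a local log-Sobolev constant of size $\exp(c\,\dd(\omega_j)^{r-1})$, and the Hebisch--Zegarlinski upgrade inherits this blow-up. This is not a technicality: the introduction states explicitly that the point of the paper is to reach $\nu$ \emph{without} establishing---and the paper does not establish---a uniform-in-$\omega$ log-Sobolev inequality for $\E^{\{i\},\omega}$. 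You have also misplaced the role of (\ref{lowerH})--(\ref{Hgeo}): they are used to obtain a uniform \emph{Poincar\'e} inequality for $\E^{\{i\},\omega}$ (via a local Poincar\'e on the sublevel sets $\{H^i\le R\}$, whose volume ratios remain bounded thanks to (\ref{Hinfty})), not to upgrade to log-Sobolev.

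The paper's substitute for your Step 2 is weaker but sufficient. One plugs $f\mapsto e^{-W^i/2}f$, with $W^i=\sum_{j\sim i}J_{ij}V(x_i,\omega_j)$, into the assumed log-Sobolev for $\mu$ and obtains a single-site entropy bound with error terms $\E^i(f^2\|\nabla_iW^i\|^2)$ and $\E^i(f^2W^i)$. These errors are \emph{not} controllable uniformly in $\omega$; however, after centering $f\mapsto f-\E^if$ and integrating against $\nu$, they are handled by the $U$-bound applied at site $i$ \emph{and at the neighbouring sites} $j$ (the analogue of your covariance/$U$-bound manoeuvre in Step 3, here promoted to a lemma fed back into the entropy estimate). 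The output is only the $\nu$-integrated inequality
\[
\nu\,\E^{i}\Big(f^2\log\frac{f^2}{\E^{i}f^2}\Big)\le c_1\,\nu\|\nabla_if\|^2+c_2\sum_{j\sim i}\nu\|\nabla_jf\|^2,\qquad c_2<1,
\]
with cross-terms at neighbouring sites. It is this weak form, not a uniform single-site log-Sobolev, that is fed into the telescoping of your Step~3; the smallness of $c_2$ (for $J$ small) is what makes the resulting series summable. Your Step~3 is along the right lines, but as written it presupposes the uniform single-site inequality that Step~2 does not deliver.
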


Assumptions (\ref{H2.3})-(\ref{H2.4}) refer to the distance, while assumptions (\ref{H2.1})-(\ref{H2.2}) on how the phase and the interactions are formed from it. Conditions (\ref{H2.5})-(\ref{H2.9}) consist of the main assumptions about the phase and the interaction and how these relate with each other. (\ref{H2.5}) and (\ref{Hinfty}) imply that both phase and interactions are unbounded functions, with the first one increasing faster than a quadratic. 
 The main
assumption about the the relation between interactions and phase, (\ref{H2.5}) and (\ref{H2.8}), 
is that the phase $\phi(x) $ dominates over
the interactions, in the sense that
$$\left\| \nabla V(x_{i},\omega_{j})\right\|^{2}\leq k+k(
d^s(x_{i})+d^s(\omega_j))\leq k+k( \phi(x_i
)+ \phi(\omega_j ))$$
for $s\leq p$.  This allows to consider interactions of higher order than the quadratic in \cite{Y} and \cite{B-H}.  
Furthermore, the main   assumption about the   phase $\phi$ is  that the  single site
measure $\mu$ satisfies the log-Sobolev
inequality. This last differentiates the current result from the one in    \cite{Pa1} where higher order interactions where also considered, but a stronger assumption   was adopted, namely that the one site measure $\E^{i,\omega}$ satisfies a log-Sobolev inequality uniformly on the boundary conditions $\omega$.  
The fact that the main assumption about the log-Sobolev inequality on a site is now relaxed and does not involve interactions is very important since that allows to consider perturbations of a convex $\phi$ analogue  to the ones considered in \cite{H-Z} and \cite{Pa5}, as long as the phase still dominates on the interactions as required from (\ref{H2.5}), (\ref{H2.8}) and (\ref{H2.9}). For instance, one can consider $$\phi(x)=d^{p}(x)+d^{p-1}(x)cos(d(x))$$
as presented for the case of the Heisenberg group in example (\ref{HGexample}). Accordingly, the conditions on the current paper are weaker than the ones in \cite{Y} and \cite{B-H} where a phase convex at infinity was considers, while the interactions had to be at most of quadratic order.

We briefly mention some consequences of this result. 
\begin{cor}
Let $\nu$ be as in Theorem \ref{thmGENERAL}. Then $\nu$ satisfies the spectral gap
inequality
$$
\nu\left ( f-\nu f\right )^2 \leq \mathfrak{C} \nu \left\| \nabla f
\right\|^2
$$
where $\mathfrak{C} $ is as in Theorem \ref{thmGENERAL}.
\end{cor}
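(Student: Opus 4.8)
The plan is to derive the spectral gap inequality directly from the log-Sobolev inequality of Theorem~\ref{thmGENERAL} by the classical \emph{linearization} argument. This implication, yielding a spectral gap constant no larger than the log-Sobolev constant, is standard and is recorded for instance in Gross~\cite{G}, Guionnet and Zegarlinski~\cite{G-Z} and Bobkov and Zegarlinski~\cite{B-Z}. Since the right-hand side $\mathfrak{C}\,\nu\|\nabla f\|^2$ of the asserted inequality already carries the log-Sobolev constant, it suffices to produce \emph{any} spectral gap with constant at most $\mathfrak{C}$.

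First I would fix a bounded smooth function $g$ on $\Omega$ and, replacing $g$ by $g - \nu g$ (which leaves $\nabla g$ unchanged), assume $\nu g = 0$. For $\epsilon$ small enough the function $f = 1 + \epsilon g$ is strictly positive, so it is a legitimate test function. Applying Theorem~\ref{thmGENERAL} to it gives
\[
\ent_\nu\big((1+\epsilon g)^2\big) \le \mathfrak{C}\,\epsilon^2\, \nu\|\nabla g\|^2,
\]
where I used $\nabla f = \epsilon\,\nabla g$ on the right.

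Next I would expand the left-hand side to second order in $\epsilon$. Writing $\psi(t) = t\log t$ and using $\psi(1+x) = x + \tfrac12 x^2 + O(x^3)$ with $x = 2\epsilon g + \epsilon^2 g^2$, together with $\nu g = 0$, a direct computation gives $\nu\big(\psi((1+\epsilon g)^2)\big) = 3\epsilon^2 \nu(g^2) + O(\epsilon^3)$ and $\psi\big(\nu (1+\epsilon g)^2\big) = \epsilon^2 \nu(g^2) + O(\epsilon^4)$, whence
\[
\ent_\nu\big((1+\epsilon g)^2\big) = 2\epsilon^2\, \var_\nu(g) + O(\epsilon^3).
\]
Dividing the displayed log-Sobolev inequality by $\epsilon^2$ and letting $\epsilon \to 0$ yields $2\,\var_\nu(g) \le \mathfrak{C}\,\nu\|\nabla g\|^2$, that is, the spectral gap holds with the even sharper constant $\mathfrak{C}/2$, and a fortiori with $\mathfrak{C}$.

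Finally I would remove the restriction to bounded smooth $g$ by a routine approximation, replacing a general $f$ by truncations and mollifications for which both sides of the spectral gap inequality converge. The only points requiring (standard) care are the positivity of $1+\epsilon g$ for small $\epsilon$, which justifies its use as a test function, and the interchange of the limit $\epsilon\to 0$ with the $\nu$-integration in the expansion of the entropy; for bounded $g$ both follow immediately by dominated convergence, so I do not expect any genuine obstacle here.
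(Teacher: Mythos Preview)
Your argument is correct; the linearization $f=1+\epsilon g$ is the standard way to extract the Poincar\'e inequality from the log-Sobolev inequality, and it even yields the sharper constant $\mathfrak C/2$. The paper itself gives no proof of this corollary, treating the implication as well known (with references to \cite{G}, \cite{G-Z}, \cite{B-Z} for background), so your proof is exactly the classical justification the authors are tacitly invoking.
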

The proofs of the  next two can be found in [B-Z].
\begin{cor}
Let $\nu$ be as in Theorem \ref{thmGENERAL} and suppose $f:\Omega \to \R$ is such
that $ \| \nabla f \| ^2 \infty < 1$. Then
$$
\nu\left( e^{\lambda f}\right) \leq \exp\left\{\lambda \nu(f) + \mathfrak{C}
\lambda^2\right\}
$$
for all $\lambda>0$ where $\mathfrak{C} $ is as in Theorem \ref{thmGENERAL}.
Moreover, the following 'decay of tails' estimate holds true
$$
\nu\left\{\left \vert f - \int fd\nu\right \vert \geq h\right\} \leq
2\exp\left\{-\frac{1}{\mathfrak{C} }h^2\right\}
$$
for all $h>0$.\end{cor}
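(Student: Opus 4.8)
The plan is to derive both estimates from the log-Sobolev inequality for $\nu$ supplied by Theorem \ref{thmGENERAL}, by means of the classical Herbst argument. Write $\Lambda(\lambda):=\nu(e^{\lambda f})$ for the Laplace transform of $f$. I would first apply the log-Sobolev inequality to the test function $g=e^{\lambda f/2}$, so that $g^2=e^{\lambda f}$. The entropy term then unfolds as $\nu(g^2\log g^2)-\nu(g^2)\log\nu(g^2)=\lambda\Lambda'(\lambda)-\Lambda(\lambda)\log\Lambda(\lambda)$, using $\Lambda'(\lambda)=\nu(f e^{\lambda f})$. The Dirichlet term is controlled through $\nabla g=(\lambda/2)\,e^{\lambda f/2}\,\nabla f$ together with the hypothesis $\|\nabla f\|_\infty^2<1$, which gives $\nu\|\nabla g\|^2=(\lambda^2/4)\,\nu(e^{\lambda f}\|\nabla f\|^2)\le(\lambda^2/4)\,\Lambda(\lambda)$. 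Combining these yields the differential inequality
\[
\lambda\Lambda'(\lambda)-\Lambda(\lambda)\log\Lambda(\lambda)\le \frac{\mathfrak{C}}{4}\,\lambda^2\,\Lambda(\lambda).
\]

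Next I would linearise this inequality. Setting $K(\lambda):=\lambda^{-1}\log\Lambda(\lambda)$, a direct computation shows that dividing the left-hand side above by $\lambda^2\Lambda(\lambda)$ produces exactly $K'(\lambda)$, so that $K'(\lambda)\le\mathfrak{C}/4$ for all $\lambda>0$. Since $\log\Lambda(\lambda)=\lambda\,\nu(f)+O(\lambda^2)$ as $\lambda\to0^+$, we have $K(0^+)=\nu(f)$, and integrating the bound on $K'$ from $0$ to $\lambda$ gives $K(\lambda)\le\nu(f)+\tfrac{\mathfrak{C}}{4}\lambda$, that is, $\Lambda(\lambda)\le\exp\!\big(\lambda\,\nu(f)+\tfrac{\mathfrak{C}}{4}\lambda^2\big)$. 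This establishes the exponential-moment bound for every $\lambda>0$ (with constant $\mathfrak{C}/4\le\mathfrak{C}$, hence \emph{a fortiori} in the stated form), and running the same argument for $-f$ covers $\lambda<0$.

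Finally, for the decay-of-tails estimate I would invoke the Chernoff bound: for $\lambda,h>0$, Markov's inequality after centring gives $\nu\{f-\nu(f)\ge h\}\le e^{-\lambda h}\,\nu(e^{\lambda(f-\nu(f))})\le\exp\!\big(-\lambda h+\tfrac{\mathfrak{C}}{4}\lambda^2\big)$. Optimising over $\lambda$ (the minimiser is $\lambda=2h/\mathfrak{C}$) produces $\nu\{f-\nu(f)\ge h\}\le e^{-h^2/\mathfrak{C}}$; applying this to $-f$ and taking a union bound yields the two-sided estimate $\nu\{|f-\nu(f)|\ge h\}\le 2e^{-h^2/\mathfrak{C}}$. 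The main point requiring care is not the algebra but the \emph{a priori} justification that $\Lambda(\lambda)$ is finite and differentiable with $\Lambda'(\lambda)=\nu(f e^{\lambda f})$, and that differentiation under the integral is legitimate. I would therefore first prove everything for bounded Lipschitz $f$, where these facts follow immediately from dominated convergence, and then remove the boundedness by a truncation of $f$, passing to the limit via monotone and dominated convergence while checking that the entropy and Dirichlet quantities converge. This approximation step is the only genuine obstacle; once it is secured, the remainder is the routine integration of a first-order differential inequality followed by the standard exponential Chebyshev optimisation.
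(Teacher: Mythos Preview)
Your proposal is correct: this is precisely the classical Herbst argument, and it is also what the paper has in mind, since the paper does not give its own proof but simply refers to \cite{B-Z} for this corollary. Your derivation in fact yields the sharper constant $\mathfrak{C}/4$ in the exponential-moment bound, from which the statement with $\mathfrak{C}$ follows \emph{a fortiori}, and your treatment of the approximation/differentiability issues is the appropriate level of care.
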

\begin{cor}
Suppose that our configuration space is actually finite dimensional, so that we
replace $\Z^d$ by some finite graph $G$, and $\Omega = (\Spin)^G$. Then
Theorem \ref{thmGENERAL} still holds, and implies that if $\mathcal{L}$ is a
Dirichlet operator satisfying
$$
\nu\left(f\mathcal{L} f\right) = -\nu\left( \vert \nabla f\vert ^2\right),
$$
then the associated semigroup $P_t = e^{t\mathcal {L}}$ is ultracontractive.
\end{cor}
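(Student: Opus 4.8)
The plan is to treat the two claims in turn: that Theorem~\ref{thmGENERAL} survives the passage to a finite graph, and that the log-Sobolev inequality it provides forces the semigroup $P_t$ to be ultracontractive.

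\emph{Finite version.} I would first observe that replacing $\Z^d$ by a finite graph $G$ only simplifies matters. On $\Omega=\Spin^G$ the Gibbs measure is literally $\nu(dx)=Z^{-1}e^{-H(x)}\,dx$, where $H$ is the full Hamiltonian over $G$ --- the finite sum of the single-site phases and of the interactions over neighbouring pairs --- so no infinite-volume limit is taken and existence and uniqueness are automatic. Every ingredient of the proof of Theorem~\ref{thmGENERAL} is either a pointwise inequality (the geometric relations \eqref{H2.1}--\eqref{H2.4}, the domination \eqref{H2.5}--\eqref{H2.9}, and the group bounds \eqref{lowerH}--\eqref{Hgeo}) or the single-site log-Sobolev input on $\mu$, while the sweeping-out recursion over sites now involves only finitely many terms. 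The argument therefore applies verbatim and yields $\nu\big(f^2\log(f^2/\nu f^2)\big)\le\mathfrak C\,\nu\|\nabla f\|^2$ with the same constant $\mathfrak C$.

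\emph{From log-Sobolev to ultracontractivity.} The operator $\mathcal L$ is the self-adjoint generator of the Dirichlet form $\mathcal E(f)=\nu\|\nabla f\|^2$, and $P_t=e^{t\mathcal L}$ is the associated symmetric Markov semigroup on $L^2(\nu)$. By Gross's theorem the log-Sobolev inequality with constant $\mathfrak C$ is equivalent to hypercontractivity, $\|P_tf\|_{q(t)}\le\|f\|_2$ along the curve $q(t)-1=e^{4t/\mathfrak C}$. To pass to ultracontractivity, i.e. to a bound $\|P_t\|_{L^1(\nu)\to L^\infty(\nu)}<\infty$ for each $t>0$, I would follow Bobkov and Zegarlinski \cite{B-Z} and exploit the super-quadratic growth of the phase: the lower bound $\dd^p\le\phi$ with $p\ge2$ in \eqref{H2.5} forces tails of order $e^{-c\,\dd^p}$, and for such measures one obtains, beyond the tight inequality, the whole Davies--Simon family
\[
\ent_\nu(f^2)\le\varepsilon\,\nu\|\nabla f\|^2+\beta(\varepsilon)\,\nu(f^2),\qquad \varepsilon>0 ,
\]
with $\beta(\varepsilon)$ quantitatively controlled as $\varepsilon\downarrow0$. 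Feeding this scale into the differential inequality for $t\mapsto\|P_tf\|_{q(t)}$ and integrating produces a finite $L^1\to L^\infty$ bound, which is precisely the equivalence recorded in \cite{B-Z}.

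\emph{Main obstacle.} The real difficulty is this last upgrade. A bare log-Sobolev inequality is equivalent only to hypercontractivity and does not by itself give ultracontractivity --- the Gaussian/Ornstein--Uhlenbeck case $p=2$ is exactly the borderline, hyperbounded but not ultrabounded. What makes ultracontractivity available here is the strictly faster-than-Gaussian decay $e^{-c\,\dd^p}$ furnished by \eqref{H2.5}, which must be converted, on the finite-dimensional product $\Spin^G$, into the family above with the correct $\varepsilon$-dependence of $\beta(\varepsilon)$. Producing that quantitative entropy--energy scale (equivalently, the $U$-bounds of \cite{B-Z}) and integrating the resulting differential inequality to a finite supremum-norm bound is the crux.
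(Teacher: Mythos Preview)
The paper does not actually prove this corollary: immediately before it, the authors write ``The proofs of the next two can be found in \cite{B-Z}'' and leave it at that. So there is nothing to compare against except the Bobkov--Zegarlinski reference itself, and your sketch is a faithful unpacking of what that reference does. The finite-graph reduction is indeed immediate, and your route to ultracontractivity --- hypercontractivity from log-Sobolev via Gross, then the upgrade through a defective family $\ent_\nu(f^2)\le\varepsilon\,\nu\|\nabla f\|^2+\beta(\varepsilon)\,\nu(f^2)$ obtained from the $U$-bound/coercive inequality of Lemma~\ref{coers} --- is precisely the mechanism in \cite{B-Z}.

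Your identification of the main obstacle is also correct, and in fact sharper than the paper's statement: a tight log-Sobolev inequality alone yields only hypercontractivity, and the Ornstein--Uhlenbeck case shows this is optimal. The upgrade to ultracontractivity genuinely requires the super-quadratic growth of the phase, i.e.\ $p>2$ in \eqref{H2.5}, so that $\beta(\varepsilon)$ has the right blow-up rate as $\varepsilon\downarrow 0$. The paper's hypotheses allow $p\ge 2$, so at the borderline $p=2$ the corollary as stated is not quite right; your caveat about the Gaussian case is therefore not just the crux of the argument but a necessary qualification of the claim itself.
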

For further properties of Markov semigroups on the lattice see Wong \cite{Won2021inf}.
Next, we present an example of a measure that satisfies the
hypothesis of  Theorem \ref{thmGENERAL}.

\subsection{The Case of Heisenberg Group}\label{sectExample}

 As an  example of a measure  $\E^{i,\omega}$ that satisfies
 the conditions of Theorem   \ref{thmGENERAL}
 one can consider the following  measure on the Heisenberg group 
$$\mathbb{E}^{\Lambda,\omega}(dX_\Lambda) =
\frac{e^{-H^{\Lambda,\omega}}dX_\Lambda} {Z^{\Lambda,\omega}}$$ 
  where for any $\Lambda\Subset \Z^d,\omega \in 
\Omega$ the  Hamiltonian is defined as 
\begin{equation}
H^{\Lambda,\omega}(x_\Lambda) =\\ \sum_{i\in \Lambda}\left(\dd^{p}(x_i) +c\dd^{p-1}(x_i)cos(\dd(x_i))\right)+
\delta\sum_{i\in \Lambda ,j\sim i}(\dd(x_i)+\dd(\omega_j))^{r}
\label{HGexample}\end{equation}
for $c\in \mathbb{R}$, $ \delta>0$ and $p,r\in \mathbb{N}$ s.t. $\frac{p+2}{2}\geq r>2$, where $\dd$ the
Carnot-Carath\'eodory  distance. It should be noted that the main assumption that the single site measure without interactions  (consisting only of the phase) satisfies the log-Sobolev inequality follows from \cite{H-Z}.

Then the main result related to the infinite
volume Gibbs measure associated with this local specification follows:
 
\begin{thm}\label{thmExample}Consider $ \He$ the Heisenberg group and let
$f\colon \mathbb{ \He}^{\mathbb{Z}^d} \to \mathbb{R}$. If
$\{\mathbb{E}^{\Lambda,\omega}\}_{\Lambda\Subset \Z^d,\omega \in
\Omega}$ as in (\ref{HGexample}). Then the infinite-dimensional Gibbs measure
$\nu$ for the local specification
$\{\mathbb{E}^{\Lambda,\omega}\}_{\Lambda\Subset \Z^d,\omega \in
\Omega}$ satisfies the log-Sobolev  inequality
$$\nu f^{2}\log\frac{f^{2}}{\nu f^{2}}\leq \mathfrak{C} \ \nu \left\vert \nabla
f
\right\vert^2$$                              
for some positive constant $\mathfrak{C}$. \end{thm}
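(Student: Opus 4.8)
The plan is to derive Theorem \ref{thmExample} as a direct corollary of Theorem \ref{thmGENERAL}, by checking that the specific Hamiltonian (\ref{HGexample}) satisfies every structural hypothesis (\ref{H2.1})--(\ref{Hgeo}) together with the main assumption on the single-site measure. Reading off (\ref{HGexample}), the phase is $\phi(x)=\dd^p(x)$ and, after freezing the boundary as in the general framework, the interaction terms are $V_j(x)=(\dd(x)+\dd(\omega_{e_j}))^r$ with coupling $J_j=\delta$. First I would dispose of the main assumption: the boundary-free single-site measure is $\mu(dx)=e^{-\dd^p(x)}dx/Z$, which is precisely the member $\mu_p$ of the family (\ref{introUb2}) with $\beta=1$; since $p\ge 2$, Hebisch and Zegarlinski \cite{H-Z} guarantee that $\mu$ satisfies a log-Sobolev inequality. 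This is the only analytic input required; everything else is a computation with powers of the Carnot-Carath\'eodory distance.

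Next I would verify the geometric conditions. Differentiating gives $\nabla\phi=p\,\dd^{p-1}\nabla\dd$ and $\nabla V_j=r(\dd+\dd(\omega_{e_j}))^{r-1}\nabla\dd$, so (\ref{H2.1}) and (\ref{H2.2}) hold with the nonnegative $C^2$ functions $\phi_1=p\,\dd^{p-1}$ and $U_j=r(\dd+\dd(\omega_{e_j}))^{r-1}$. Proposition \ref{eik} yields $\|\nabla\dd\|=1$ away from the singular set, so (\ref{H2.3}) holds with $\xi=\tau=1$, and the same proposition supplies $|\Delta\dd|\le K/\dd$, which is (\ref{H2.4}) with $\theta=K$. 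For the domination conditions (\ref{H2.5})--(\ref{H2.6}) one computes $\dd\,\phi_1=p\,\dd^p=p\,\phi$ and $\dd\,U_j=r\dd(\dd+\dd(\omega_{e_j}))^{r-1}$, so they reduce to elementary comparisons between powers of $\dd$ (for $\dd$ large); in particular $d^p\le\phi$ holds with equality. The condition (\ref{Hinfty}) is immediate since $V_j=(\dd+\dd(\omega_{e_j}))^r\to+\infty$ as $\dd(\omega_{e_j})\to\infty$.

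The crux of the verification is (\ref{H2.8})--(\ref{H2.9}), and this is exactly where the hypothesis $\tfrac{p+2}{2}\ge r>2$ is consumed. Using $\|\nabla\dd\|=1$,
\[
\|\nabla V_j\|^2=r^2\,(\dd+\dd(\omega_{e_j}))^{2r-2}\le r^2\,2^{2r-2}\big(\dd^{2r-2}+\dd^{2r-2}(\omega_{e_j})\big),
\]
so (\ref{H2.8}) holds with the exponent $s=2r-2$; the bound $(a+b)^m\le 2^{m-1}(a^m+b^m)$ likewise gives (\ref{H2.9}) with the same $s$, since $r\le 2r-2$ for $r\ge 2$. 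The requirement $s\le p$ then reads $2r-2\le p$, i.e.\ $r\le\tfrac{p+2}{2}$, which is precisely the standing assumption. I expect this power-counting step to be the main (and essentially only) obstacle, since it is what pins down the admissible range of interaction exponents and shows that the non-quadratic interaction $r>2$ is genuinely covered while remaining dominated by the phase.

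Finally I would check the three group-theoretic hypotheses. Left-invariance of the Carnot-Carath\'eodory metric yields the subadditivity $\dd(x\cdot y)\le\dd(x)+\dd(y)$; substituting this into $H$ and applying $(a+b)^m\le 2^{m-1}(a^m+b^m)$ to the phase and to each interaction term gives (\ref{lowerH}) with $\lambda=2^{\max(p,r)-1}$. Since inversion $x\mapsto x^{-1}$ is a $\dd$-isometry fixing the identity, $\dd(x^{-1})=\dd(x)$ and hence $H(x^{-1})=H(x)$, which is (\ref{Hinverse}). Parametrizing a geodesic from $0$ to $x$ by arc length makes $s\mapsto\dd(\gamma(s))$ nondecreasing, and since $H$ is an increasing function of $\dd$, (\ref{Hgeo}) follows. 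With all hypotheses of Theorem \ref{thmGENERAL} verified, the stated log-Sobolev inequality for $\nu$ is immediate.
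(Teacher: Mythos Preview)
Your approach is exactly the paper's: deduce Theorem \ref{thmExample} from Theorem \ref{thmGENERAL} by checking (\ref{H2.1})--(\ref{Hgeo}) and the main assumption, invoking \cite{H-Z} for the log-Sobolev inequality of $\mu_p$ and Proposition \ref{eik} for (\ref{H2.3})--(\ref{H2.4}); your power-counting $s=2r-2\le p$ is precisely what the paper records.

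There is, however, one small slip in your verification of (\ref{H2.6}). With your choice $V_j(x)=(\dd(x)+\dd(\omega_{e_j}))^r$ and $U_j=r(\dd+\dd(\omega_{e_j}))^{r-1}$, the inequality $k_0V_j\le \dd\,U_j$ would force $k_0(\dd+\dd(\omega_{e_j}))\le r\,\dd$, which fails whenever $\dd(x)$ is small and $\dd(\omega_{e_j})$ is large. The remedy is to subtract the $x$-independent term $\dd(\omega_{e_j})^r$ from $V_j$ (this does not change the measure $\E^{i,\omega}$, only the normalising constant), which is exactly what the paper does by writing the interaction in the binomially expanded form (\ref{eqform}),
\[
V(x,\omega)=\delta\,\dd^r(x)+\sum_{k=1}^{r-1}\binom{r}{k}\dd^{r-k}(x)\,\dd^k(\omega),
\]
with the $k=r$ term omitted. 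For this modified $V_j$ one has $\dd\,U_j=\sum_{k=0}^{r-1}\binom{r}{k}(r-k)\,\dd^{r-k}(x)\,\dd^k(\omega)\ge V_j$, so (\ref{H2.6}) holds with $k_0=1$. After this adjustment your verification goes through unchanged (the gradient $\nabla V_j$, and hence $U_j$ and (\ref{H2.8}), are unaffected), and the rest of your argument matches the paper's.
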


A few words about the structure of the paper. In section \ref{section3} we show a coercive inequality as well as the Poincare inequality for the one site measure $\E^i$. In the next section we present the first  sweeping out inequalities and show convergence to equilibrium, while in section \ref{secLS} a weak logarithmic Sobolev inequality is obtained for the  one site measure $\E^i$.  Further sweeping out inequalities are obtained in section \ref{section6} together with a log-Sobolev inequality for the product measure. In the next section \ref{PROOF} we gather all the previous bits together to prove  the main result of Theorem \ref{thmGENERAL}. Finally, in section \ref{sectExample1} we present the proof of Theorem \ref{thmExample}.

\section{A coercive inequality for the single-site space} \label{section3}
In this section we present a single-site coercive inequality that will provide the main tool in order to control the higher interactions. This  coercive inequality is on the line of the U-bound inequalities presented in \cite{H-Z} in order to prove log-Sobolev inequalities on a typical analytic framework (see also  \cite{INGLIS201176}, \cite{Dag2022}  and  \cite{Da-Ze2021}).  Furthermore, as we show in Lemma \ref{spectralgap1d} this coercive inequality  will imply  the spectral gap inequality for $\E^{i,\omega}$ uniformly on $\omega$. In a recent work, weak U-bound inequalities have been used to prove Spectral Gap inequalities (see \cite{Da-Qi-Ze22}) and Super-Poincare inequalities (see \cite{Qiu24}).
 \begin{lem}\label{coers}
Under   assumptions (\ref{H2.1})-(\ref{Hgeo}), there exists $C_{0}>0$ such that,
for all $r \le p$,
\[
\E \dd^r f^2 \le C_{0} \E|\nabla f|^2 + C_{0} \E f^2,
\]
and
\[\E Hf^{2}\leq C_{0} \E  |\nabla f|^{2}+C_{0} \E f^{2}\]
for any smooth function $f$ with compact support.
\end{lem}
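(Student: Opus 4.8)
The plan is to prove the two bounds together by establishing a single "U-bound" type estimate that controls the potential $\dd^p$ (hence, by the domination hypotheses, both $\dd^r$ for $r\le p$ and the full Hamiltonian $H$) in terms of the Dirichlet form $\E|\nabla f|^2$ plus a lower-order $\E f^2$ term. The starting point will be an integration-by-parts identity against the measure $\E(dx)=Z^{-1}e^{-H}dx$. Writing the divergence of the vector field $g\,\nabla\dd$ with $g=g^2$ built from the test function, one gets a term involving $\nabla H\cdot\nabla\dd$ and a term involving $\Delta\dd$. By assumptions \eqref{H2.1} and \eqref{H2.2}, $\nabla H=\nabla\phi+\sum_j J_j\nabla V_j=(\phi_1+\sum_j J_jU_j)\nabla\dd$, so $\nabla H\cdot\nabla\dd=(\phi_1+\sum_j J_jU_j)\|\nabla\dd\|^2$, and \eqref{H2.3} makes $\|\nabla\dd\|^2$ comparable to a constant. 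The key algebraic input is then \eqref{H2.5} and \eqref{H2.6}: $k_0\phi\le d\phi_1$ (i.e. $\dd\,\phi_1\gtrsim\phi$) and $k_0V_j\le\dd U_j$, which together with $\dd^p\le\phi$ let me bound $\dd\cdot\nabla H\cdot\nabla\dd$ from below by a positive multiple of $H$ (and of $\phi\ge\dd^p\ge\dd^r$).

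Concretely, first I would run the computation with test function $f^2$ against $\E$: integrating $\nabla\cdot(\dd\,\nabla\dd\, f^2e^{-H})$, the boundary term vanishes for compactly supported $f$, yielding an identity of the form
\[
\E\big[\dd\,(\nabla H\cdot\nabla\dd)\,f^2\big]
=\E\big[\|\nabla\dd\|^2 f^2\big]+\E\big[\dd\,\Delta\dd\, f^2\big]+2\,\E\big[\dd\,(\nabla\dd\cdot\nabla f)\,f\big].
\]
The left side is $\gtrsim \E[\dd(\phi_1+\sum_jJ_jU_j)f^2]\gtrsim \E[(\phi+\sum_jJ_jV_j)f^2]=\E[Hf^2]$ by the domination inequalities and \eqref{H2.3}. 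On the right, the first two terms are controlled because $\|\nabla\dd\|^2\le\tau^2$ and, by \eqref{H2.4}, $\dd\,|\Delta\dd|\le\theta$, so both are bounded by a constant times $\E f^2$. The cross term $2\E[\dd(\nabla\dd\cdot\nabla f)f]$ is the one requiring care: by Cauchy–Schwarz and Young's inequality it is dominated by $\epsilon\,\E[\dd^2\|\nabla\dd\|^2 f^2]+\epsilon^{-1}\E|\nabla f|^2$. The factor $\dd^2$ here is the crux—it reintroduces a power of $\dd$ that is smaller than $\dd^p$ (since $p\ge2$), so I expect to absorb $\E[\dd^2 f^2]$ into the leading $\E[Hf^2]\ge\E[\dd^p f^2]$ term for small $\epsilon$, possibly at the cost of splitting the region $\{\dd\le R\}$ from $\{\dd>R\}$ and bounding the former by a constant multiple of $\E f^2$.

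Once $\E[Hf^2]\le C_0\E|\nabla f|^2+C_0\E f^2$ is in hand, the first inequality follows immediately: since $\dd^r\le\dd^p\le\phi\le H$ for every $r\le p$ (using that $\dd\ge 1$ on the relevant region and handling $\dd\le 1$ by the trivial bound $\dd^r\le1$), we get $\E[\dd^r f^2]\le\E[Hf^2]+C\E f^2$, which is of the desired form. I anticipate the \textbf{main obstacle} to be the absorption of the cross term: making precise that the power $\dd^2$ produced by Young's inequality is genuinely subcritical relative to $\dd^p$, and carefully tracking the constants $\tau,\xi,\theta,k_0,p$ through the splitting into small- and large-$\dd$ regions so that the absorbed coefficient is strictly less than one. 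A secondary technical point is ensuring the integration by parts is legitimate in the sub-Riemannian setting, where $\Delta\dd$ is only controlled in the sense of distributions by \eqref{H2.4}; this is handled by the standard approximation/cutoff argument that is implicit in the U-bound method of \cite{H-Z}.
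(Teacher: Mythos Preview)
Your proposal is correct and follows essentially the same U-bound argument as the paper: the integration-by-parts identity you write down (with the vector field $\dd\,\nabla\dd$ against $f^2 e^{-H}$) is precisely what the paper derives, and the subsequent use of \eqref{H2.1}--\eqref{H2.6} to lower-bound the left side by $\E[Hf^2]$, together with the Young/splitting absorption of the subcritical $\dd^2$ cross term into $\dd^p\le H$, mirrors the paper's proof exactly. The only cosmetic difference is that the paper first performs the integration by parts with $f$ and then substitutes $f\mapsto f^2$, whereas you work with $f^2$ from the outset; the resulting estimates are identical.
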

\begin{proof}
It is clear that it suffices to prove the inequality for $r=2(p-1)$.
Indeed, if $\E  \dd^{2(p-1)} f^2 \le C \E |\nabla f|^2 + C \E f^2$
holds then for all $r \le 2(p-1)$ we have
$\E \dd^r f^2 = \E [\dd^r f^2; d \le 1] + \E [\dd^r f^2; d > 1]
\le \E f^2 + \E \dd^{2(p-1)} f^2 \le C \E |\nabla f|^2 + (C+1) \E f^2$.
By homogeneity, in all calculations, we will forget the normalizing constant
$Z$ and think of $\E(dx)$ as being equal to $e^{-H(x)} dx$.
In other words, we may, without loss of generality, assume that $Z=1$.
Let $f$ be a smooth function with compact support and write
\[
\E |\nabla f|^2 = \int |\nabla f|^2 e^{-H} dx.
\]
Since 
\[
\nabla (f e^{-H}) = (\nabla f) e^{-H} 
-  (\nabla H) e^{-H} f,
\]
upon taking the inner product   with $\dd\nabla \dd$ on both sides we get
\[
 \dd\langle \nabla \dd,\nabla H\rangle e^{-H } f=\dd\langle\nabla \dd,\nabla f\rangle e^{-H}-\dd\langle \nabla \dd, \nabla (f e^{-H})\rangle .
\]
Hence,
\begin{align*} \underbrace{
\E \dd\langle \nabla\dd,\nabla H\rangle f}_{\I_{1}}=&\E\dd\langle\nabla\dd,\nabla f\rangle-\int \dd\langle \nabla\dd, \nabla (f e^{-H})\rangle dx  \\ \leq&
  \E\dd|\nabla\dd||\nabla f|-\int \dd\langle \nabla\dd, \nabla (f e^{-H})\rangle dx
 \\ \leq& \tau
  \E\dd |\nabla f|- \underbrace{\int \dd\langle \nabla\dd, \nabla (f e^{-H})\rangle dx}_{\I_2}
\end{align*}
where above we used  (\ref{H2.3}). Let $X$ be any of the H\"ormander generators of $\Spin$. 
Then, by the structural assumption,
we have the integration-by-parts
formula
\[
\int F (XG) dx = - \int (XF) G dx
\]
for smooth functions $F$ and $G$ with  compact support. As a consequence,
the integration-by-parts formula
\[
\int f \langle \nabla \Phi, \nabla \Psi \rangle dx
= - \int \langle \nabla \Phi, \nabla f \rangle \Psi dx
- \int (\Delta\Phi) \Psi f dx,
\]
holds, and so
\[
\I_2 
=\int \dd\langle \nabla\dd, \nabla (f e^{-H})\rangle dx=-\int \dd|\nabla\dd| f e^{-H} dx-\int \dd(\Delta\dd)f e^{-H} dx
\geq- \tau\E\dd f-\theta \E f
\]
because of  (\ref{H2.3}) and (\ref{H2.4}). Since $H = \phi+\sum_j J_{j}V_j$, the  first term is
\begin{align*}
\I_1 
& = \E\dd\langle \nabla\dd,\nabla H\rangle f=\E\dd\langle \nabla\dd,\nabla\phi\rangle f+\sum_jJ_{j}\E\dd\langle \nabla\dd,\nabla V_j\rangle f= 
\\
&= \E \dd\phi_1 \vert \nabla\dd\vert f+\sum_jJ_{j}\E\dd U_{j}\vert \nabla\dd\vert f \geq \xi\ k_{0}\E\phi f+\xi k_{0}\sum_jJ_{j}\E V_{j}  f
\end{align*}
where above we used at first (\ref{H2.1})-(\ref{H2.2}) and in the last inequality (\ref{H2.3}), (\ref{H2.5}) and (\ref{H2.6}). Combining all that  we arrive at
\[
\E\phi f+\sum_jJ_{j}\E  V_{j}  f\leq\ \frac{1}{\xi\ k_{0}}
  (\tau \E \dd |\nabla f|+\tau \E\dd f+\theta \E f) . \]
 If we replace $f$ by $f^2$  and use   Cauchy-Schwarz  inequality we obtain
\begin{align*}
\E Hf^{2}\leq\  & 
  \frac{1}{ \xi\ k_{0}}(2\tau\E\dd f |\nabla f|+ \tau \E\dd f^{2}+\theta \E f^{2} )
  \leq\frac{1}{\xi\ k_{0}}(\tau\E  |\nabla f|^{2}+\tau\E\dd^{2}f ^{2}+\tau \E\dd f^{2}+\theta \E f^{2})  \\  = & \frac{1}{ \xi\ k_{0}}\{\tau\E  |\nabla f|^{2}+\tau\E(\mathrm{I}_{\{\frac{4\tau}{\xi\ k_{0}}\leq\dd^{p-2}\}} +\mathrm{I}_{_{\{\frac{4\tau}{\xi\ k_{0}}>\dd^{p-2}\}}})\dd^{2}f ^{2}+\\  & +\tau\E(\mathrm{I}_{\{\frac{4\tau}{\xi\ k_{0}}\leq \dd^{p-1}\}}+\mathrm{I}_{\{\frac{4\tau}{\xi\ k_{0}}>\dd^{p-1}\}})\dd f^{2}+ \theta \E f^{2} \} \\ 
  \leq &\frac{1}{\xi\ k_{0}}\left\{\tau \E  |\nabla f|^{2}+\frac{\xi\ k_{0}}{2} \E \dd^{p}f ^{2}+ \left(4\tau(\frac{4\tau}{\xi\ k_{0}})^\frac{2}{p-2}+4\tau(\frac{4\tau}{\xi\ k_{0}})^\frac{1}{p-1}+ \theta\right) \E f^{2}  \right\} \\  
  \leq &\frac{1}{2} \E Hf ^{2}+
  \frac{1}{ \xi\ k_{0}}\left\{\tau\E  |\nabla f|^{2}+ \left(4\tau(\frac{4\tau}{\xi\ k_{0}})^\frac{2}{p-2}+4\tau(\frac{4\tau}{\xi\ k_{0}})^\frac{1}{p-1}+ \theta\right) \E f^{2}\right\}
\end{align*}
since  $d^p\leq \phi \leq H$, because of
   (\ref{H2.5}) and the non negativity of $\phi$ and $V_j$. Again, for the same reason we obtain
\begin{align*}
\E\dd^{p}  f^{2}\leq\E Hf^{2}\leq\frac{2}{ \xi\ k_{0}}\left\{\tau\E  |\nabla f|^{2}+ \left(4\tau(\frac{4\tau}{\xi\ k_{0}})^\frac{2}{p-2}+4\tau(\frac{4\tau}{\xi\ k_{0}})^\frac{1}{p-1}+ \theta\right) \E f^{2}\right\}
\end{align*}
which proves  the  inequality.
\end{proof}
 We will now prove the  Poincare inequality for the one site measure $\E^i$ for a constant uniformly on the boundary conditions. For the Poincare inequality on Carnot Group see Dagher \cite{Dag24},  Chatzakou, Federico and Zegarlinski (\cite{Ch-Fe-Ze1} and \cite{Ch-Fe-Ze2}), 
 Dragoni,  Kontis and Zegarliński \cite{Dr-Ko-Ze12},  Dagher, Qiu, Zegarlinski and Mengchun Zhang  \cite{Da-Qi-Ze22}. The proof follows closely  the proof of   the local Poincar\'e inequalities from Saloff-Coste  \cite{SC} and   Varopoulos and  Coulhon \cite{V-SC-C}.
\begin{lem}\label{spectralgap1d}Under   assumptions (\ref{H2.1})-(\ref{Hgeo}),  $\E^{i,\omega}$ satisfies the spectral gap inequality
\[\E^{i}(f-\E^if)^2\leq c_p\E^i \|\nabla_if\|^2\]
for some constant $c_p>0$ uniformly on the boundary conditions.
\end{lem}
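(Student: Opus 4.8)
The plan is to combine a local Poincar\'e inequality on a fixed Carnot--Carath\'eodory ball centred at the identity with the coercive inequality of Lemma \ref{coers}, the latter providing control of everything outside that ball \emph{uniformly} in $\omega$. Write $\E=\E^{i,\omega}$ and fix the ball $B_R:=\{x\in\Spin:\dd(x)\le R\}$. Since $\E(f-\E f)^2\le \E(f-f(0))^2$ for the constant $f(0)$, it suffices to bound $\E(f-f(0))^2$, which I split as $\E[\Ind_{B_R}(f-f(0))^2]+\E[\Ind_{B_R^c}(f-f(0))^2]$.

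For the outer piece, on $B_R^c$ one has $\dd^r\ge R^r$ for any $r\le p$, so applying Lemma \ref{coers} to $g:=f-f(0)$ (note $\nabla g=\nabla f$) gives
\[
\E[\Ind_{B_R^c}(f-f(0))^2]\le \frac{1}{R^r}\,\E\dd^r g^2\le \frac{C_0}{R^r}\,\E\|\nabla f\|^2+\frac{C_0}{R^r}\,\E(f-f(0))^2 .
\]
Because the constant $C_0$ of Lemma \ref{coers} depends only on $\tau,\xi,k_0,\theta,p$ and not on the boundary condition, this estimate is uniform in $\omega$; choosing $R$ once and for all so that $C_0/R^r\le \tfrac12$ lets me absorb the last term into the left-hand side at the very end.

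For the inner piece the idea is to integrate along geodesics from the identity and to exploit the monotonicity assumption (\ref{Hgeo}). For $x\in B_R$ let $\gamma_x$ be a unit-speed geodesic from $0$ to $x$; then $f(x)-f(0)=\int_0^{\dd(x)}\langle\nabla f(\gamma_x(s)),\dot\gamma_x(s)\rangle\,ds$, and Cauchy--Schwarz gives $(f(x)-f(0))^2\le \dd(x)\int_0^{\dd(x)}\|\nabla f(\gamma_x(s))\|^2\,ds$. The crucial step is that (\ref{Hgeo}) yields $H(\gamma_x(s))\le H(x)$, hence $e^{-H(x)}\le e^{-H(\gamma_x(s))}$, so after multiplying by the density and using $\dd(x)\le R$,
\[
\E[\Ind_{B_R}(f-f(0))^2]\le \frac{R}{Z}\int_{B_R}\int_0^{\dd(x)}\|\nabla f(\gamma_x(s))\|^2\,e^{-H(\gamma_x(s))}\,ds\,dx .
\]
A geometric segment-covering (change of variables) estimate in the sub-Riemannian setting, which is the ingredient borrowed from \cite{SC} and \cite{V-SC-C}, then bounds the right-hand side by $C_R\,\E\|\nabla f\|^2$, with $C_R$ depending only on $R$ and on the H\"ormander system.

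Combining the two pieces and absorbing the $\tfrac{C_0}{R^r}\E(f-f(0))^2$ term yields $\E(f-f(0))^2\le c_p\,\E\|\nabla f\|^2$ with $c_p$ independent of $\omega$, which is the asserted spectral gap. The main obstacle is exactly the uniformity in $\omega$ of the inner estimate: a naive transfer of the Lebesgue local Poincar\'e inequality to $\E$ would cost a factor $\exp(\operatorname{osc}_{B_R}H)$, and this oscillation is \emph{not} bounded uniformly in $\omega$, since the interaction $V_j(\cdot,\omega_j)$ grows with $\dd(\omega_j)$. The device that rescues uniformity is the radial monotonicity (\ref{Hgeo}), which allows one to replace the endpoint density $e^{-H(x)}$ by the larger interior density $e^{-H(\gamma_x(s))}$ and thereby eliminate any oscillation constant, leaving only the $\omega$-free geometric covering bound. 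Verifying that this covering step holds with a controlled constant in the Carnot--Carath\'eodory geometry (where geodesics from $0$ may branch on the singular $x_3$-axis) is the technical heart of the argument; the remaining assumptions (\ref{lowerH}) and (\ref{Hinverse}) are not needed for this lemma.
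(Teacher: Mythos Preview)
Your outer-piece argument via Lemma~\ref{coers} is correct and is exactly what the paper does. The inner piece, however, contains a genuine gap: centring at the \emph{single point} $f(0)$ and integrating only along geodesics emanating from $0$ cannot give a bound of the form $\E[\Ind_{B_R}(f-f(0))^2]\le C_R\,\E\|\nabla f\|^2$ in any space of homogeneous dimension $Q\ge 3$ (and $Q=4$ for $\He_1$). The obstruction is capacity: a smooth cut-off $f_\epsilon$ with $f_\epsilon(0)=0$, $f_\epsilon\equiv 1$ outside $B_\epsilon$, and $\|\nabla f_\epsilon\|\le C/\epsilon$ satisfies $\E[\Ind_{B_R}(f_\epsilon-f_\epsilon(0))^2]\to \E[\Ind_{B_R}]>0$ while $\E\|\nabla f_\epsilon\|^2\le C\epsilon^{-2}\E[\Ind_{B_\epsilon}]\approx \epsilon^{Q-2}\to 0$, so no finite $C_R$ can exist. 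Tracing through your displayed estimate, the implicit ``change of variables'' $(x,s)\mapsto y=\gamma_x(s)$ produces a multiplicity behaving like $\dd(y)^{1-Q}$ near the origin, which is not dominated by the Haar density. The local Poincar\'e inequalities in \cite{SC} and \cite{V-SC-C} are stated for the \emph{average} centring, never for a point; their segment-covering arguments join arbitrary \emph{pairs} $x,\,xz$ via left translation, and it is precisely this spreading over all translates that makes the covering uniform.

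That is the route the paper takes: it centres at the average $a(f)=|V(R)|^{-1}\int_{V(R)}f$ over the Hamiltonian sublevel set $V(R)=\{H^i\le R\}$ (not a metric ball), writes $(f(x)-a(f))^2$ as an average of $(f(x)-f(xz))^2$, and integrates along a geodesic from $0$ to $z$. Locating $z$ and $x\gamma(s)$ in enlarged sublevel sets then requires all three of \eqref{lowerH}, \eqref{Hinverse}, \eqref{Hgeo}, together with \eqref{Hinfty} to bound the volume ratio $|V(2\lambda R)|/|V(R)|$ uniformly in $\omega$. So your concluding remark that \eqref{lowerH} and \eqref{Hinverse} are unnecessary is also incorrect: they are exactly what makes the two-point (average) version go through with $\omega$-independent constants. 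Your device of invoking \eqref{Hgeo} to replace $e^{-H(x)}$ by $e^{-H(\gamma_x(s))}$ is a pleasant idea, but it does not touch the capacity obstruction above.
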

\begin{proof}
We denote  set $V(R)=\{x_i:H ^i\leq R\}$. Then,  if we  define  $a(f)=\frac{1}{\left\vert V_{R} \right\vert}\int_{V_{R}}f(x_i)dx_i$, where $\left\vert V_R\right\vert= \int_{V_{R}}dx_i$,  we can  compute  
\[\E^i(f-a(f))^2=\underbrace{\E^i(f-a(f))^2\mathrm{I}_{V(R)}}_{\II_1}+\underbrace{\E^i(f-a(f))^2\mathrm{I}_{V(R)^c}}_{\II_2}\]
where $V(R)^c$ the complement of $V(R)$. Since $\phi, V$ and $J_{ij}$ are all no negative, $H^i\geq0$ and so the first term is 
\[\II_1\leq \frac{1}{Z^i}\int_{V(R)}( f(x_i)-a(f))^2dx_{i}.\]
If we now  use the invariance  of the  measure $dx_i$ with  respect to the group operation we can write $a(f)=\frac{1}{\left\vert V(R) \right\vert}\int f(x_iz_i)\mathrm{I}_{V(R)}(x_{i}z_{i})dz_i$. If we substitute this expression on the last inequality and use Cauchy-Schwarz inequality we obtain 

\[\II_1\leq \frac{1}{Z^i}\frac{1}{\vert V(R) \vert}\int( f(x_i)-f(x_iz_i))^2\mathrm{I}_{V(R)}(x_{i}z_{i})\mathrm{I}_{V(R)}(x_{i} )dz_{i}dx_{i}\]
where above we also considered $R$ large enough so that $\vert V(R)\vert>1$, i.e. $\frac{1}{\vert V(R) \vert^2}\leq\frac{1}{\vert V(R) \vert}$. 
Consider  a geodesic  $\gamma:[0,t]\rightarrow H $  from $0$ to $z_i$, such that $\vert \dot \gamma(t) \vert \leq 1$. Then      we can write 
\begin{align*}( f(x_{i})- f(x_iz_{i}))^2 =&\left(\int_{0}^{\dd(z_i)} \frac{d}{ds} f(x_{i}\gamma(s))ds\right)^2=\left(\int_{0}^{\dd(z_i)} \nabla_{i}\ f(x_{i}\gamma(s))\cdot \dot\gamma(s)ds\right)^2  \\ \leq &\dd(z_i)\int_{0}^{\dd(z_i)}\| \nabla_{i}\ f(x_{i}\gamma(s))\|^2 ds . \end{align*}
From  the last inequality, we can  bound
\[\II_1\leq\frac{1}{Z^i}\frac{1}{\vert V(R)\vert}\int \dd(z_i)\int_{0}^{\dd(z_i)}\| \nabla_{i}\ f(x_{i}\gamma(s))\|^2 ds\mathrm{I}_{V(R)}(x_{i}z_{i})\mathrm{I}_{V(R)}(x_{i} )dz_{i}dx_{i}.\]
We observe  that for   any $x_i  \in V(R)$ and $ x_i z_i\in V(R)$ we obtain 
\[H^{i}(z_i)=H^{i}(x_i^{-1}x_iz_i)\leq  \lambda H^{i}(x_i^{-1} )+\lambda H^{i}( x_iz_i) \leq \lambda +\lambda H^{i}(x_i  )+H^{i}( x_iz_i)\leq  \underbrace{2R\lambda }_{:=r_1}\] 
because of (\ref{lowerH}) and (\ref{Hinverse}). Furthermore, using (\ref{lowerH}) and (\ref{Hgeo})   we can calculate
\[H^{i}( x_i \gamma(s))\leq   \lambda H^{i}(x_{i})+\lambda H^{i}(\gamma(s))\leq   \lambda H^{i}(x_{i})+\lambda H^{i}(z_i)< \underbrace{(2\lambda^2 +\lambda )R}_{:=r_2}.\]
From  (\ref{H2.5}), since $\dd(z_i)\leq \phi(z_i)^\frac{1}{p}$   we can also bound
\[\dd(z_i)\leq  \phi(z_i)^\frac{1}{p}\leq H^i(z_i)^\frac{1}{p}\leq r_1^{1/p}.\]
So, we get
\begin{align*} \II_1&\leq\  \frac{ r_1^{1/p}}{Z^i\vert V(R) \vert} \int \int \int_{0}^{\dd(z_i)}\| \nabla_{i}\ f(x_{i}\gamma(s))\|^2  \mathbb{\mathcal{I}}_{V(r_2)}( x_i \gamma(s))\mathbb{\mathcal{I}}_{V(r_1)}(z_i )dsdz_idx_{i}.
\end{align*}
Using again the  invariance   of  the measure we can write
\begin{align*} \II_1&\leq  \frac{ r_1^{1/p}}{Z^i\vert V(R)\vert} \int \int \int_{0}^{\dd(z_i)}\| \nabla_{i}\ f(x_{i})\|^2   \mathbb{\mathcal{I}}_{V(r_2)(x_{i})}\mathbb{\mathcal{I}}_{V(r_1)}(z_i )dsdx_{i}dz_i\\  &  =\frac{ r_1^{1/p}}{Z^i\vert V(R)\vert} \int \int  \dd(z_{i})\| \nabla_{i}\ f(x_{i})\|^2  \mathbb{\mathcal{I}}_{V(r_2)}(x_{i})\mathbb{\mathcal{I}}_{V(r_1)}(z_i )dx_{i}dz_i .\end{align*}
Notice that for    $z_i  \in V(r_1)$  one can bound as before  $  \dd(z_i)\leq  \phi(z_i)^\frac{1}{p}\leq H^i(z_i)^\frac{1}{p}\leq r_1^{1/p}$, and so
\begin{align*}\II_1&\leq  \frac{ r_1^{2/p}}{Z^i\vert V(R)\vert} \int \int   \left(\| \nabla_{i}\ f(x_{i})\|^2  \mathbb{\mathcal{I}}_{V(r_2)}(x_{i})\right)dx_{i}\mathbb{\mathcal{I}}_{V(r_1)}(z_i )dz_i \\
 & \leq \frac{ r_1^{2/p} \vert V(r_1)\vert}{ \vert V(R)\vert Z^{i,\omega}} \int \| \nabla_{i}\ f(x_{i})\|^2   \mathbb{\mathcal{I}}_{V(r_2)}(x_{i})dx_{i} .
\end{align*}
Since, for $x_i \in V(r_2)$, we    have $e^{-H^{i,\omega}}\geq e^{-r_2 } $, the last quantity can be   bounded by 
\[ \II_1 \leq  \frac{ e^{r_2}r_1^{2/p} \vert V(r_1)\vert}{ \vert V(R)\vert}\mathbb{E}^{i}\| \nabla_{i}\ f\|^2   .
\]
If now we take under account that $\frac{ \vert V(r_1)\vert}{ \vert V(R)\vert }=\frac{ \vert V(2R\lambda)\vert}{ \vert V(R)\vert }\geq 1$, as well as that because of (\ref{Hinfty}), the limit $\frac{ \vert V(2\lambda R)\vert}{ \vert V(R)\vert }\rightarrow1$ as $\sum_{j\sim i}\dd(\omega_j)\rightarrow \infty$,
we then  observe that  $\frac{ \vert  V( r_1)\vert }{ \vert V(R)\vert }$ is bounded from above uniformly on $\omega$ from a constant. Thus, we finally obtain that
\begin{align*} \II_1&\leq  C(R) \mathbb{E}^{i}\| \nabla_{i}\ f(x_{i})\|^2   \end{align*}
for some positive  constant $C(R)$.

We will now compute $\II_2$. We have 
\[\II_2\leq\E^i(f-a(f))^2\frac{H^{i}}{R}\leq\frac{C_0}{R}\E^i |\nabla f|^{2}+\frac{C_0}{R} \E ^{i}(f-a(f))^2\]
where above we used  Lemma \ref{coers}.
 Combining  all the above we obtain
 \[\E ^{i}(f-a(f))^2\leq(C(R)+\frac{C_0}{R})\E^i \| \nabla f\|^{2}+\frac{C_0}{R} \E ^{i}(f-a(f))^2\]
For $R$ large   enough so that $\frac{C_0}{R}<1$ we get
\[\E ^{i}(f-a(f))^2\leq\frac{C(R)+\frac{C_0}{R}}{1-\frac{C_0}{R} }\E^i \| \nabla f\|^{2}.\]
Since $\E ^{i}(f-\E^i(f))^2\leq4\E ^{i}(f-k)^2 $ for any real number k, the result follows.
\end{proof}

\section{Sweeping out inequalities and  convergence to the Gibbs measure} \label{section4}

In this section we prove sweeping out inequalities that interchange the positions between the gradient and the measure. Sweeping out inequalities are a weak version of the sweeping relations  introduced by Zegarlinski to prove log-Sobolev inequalities for continuous spin systems on the lattice (see for instance \cite{Zega1990} and \cite{G-Z}).

Recall  the  definition of the  operator $\E^\Lambda$
and the definition of $\nabla_j f$ as being the gradient of a 
function $f:\Spin^{\Z^d} \to \R$ with respect to the coordinate
$\omega_j \in \Spin^{\{j\}}$.
Also, recall the assumption that there is a unique Gibbs measure $\nu$.
By our notational conventions, for $i \in \Z^d$, the quantity $\E^i f$
is a function on $\Z^d$ that depends only on the $2d$ variables
$x_j$, with $j\in \Z^d$ ranging over the neighbors of $i$ and the $x_j$'s that comprise the input of   $f$ excluding $x_i$.
Fixing a neighbor $j$, the 
gradient $\nabla_j \E^i f$ is then gradient with respect
to $x_j$. Denoting by $X_j^1, \ldots, X_j^n$ the H\"ormander
system for $\Spin^{\{j\}}$, we have
$\nabla_j ( f) = (X_j^1 f, \ldots, X_j^n f)$,
so that $\|\nabla_j (\E^i f)\|^2 = \sum_{\alpha=1}^n (X^\alpha_j \E^i)^2 $.
We have

\begin{lem}\label{sweep1}
Suppose that (\ref{H2.1})-(\ref{Hgeo}) hold.  Let $i,j \in \Z^d$ be neighbors. 
Then, for $J$ sufficiently small, there are constants $D_1>0$ and $0<D_2< 1$ such that
\[
\nu \| \nabla_j (\E^i f)\|^2 \le D_1 \nu \| \nabla_j f\|^2
+ D_2 \nu \| \nabla_i f\|^2 .
\]
\end{lem}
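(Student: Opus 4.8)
The plan is to start from the exact differentiation identity for the normalized single-site average and peel off a ``diagonal'' gradient term from a covariance term. Writing $\E^i f = \int f\, e^{-H^i}dx_i \big/ \int e^{-H^i}dx_i$ and differentiating in $x_j$ (only the interaction term $J_{ij}V(x_i,x_j)$ of $H^i$ depends on $x_j$), the quotient rule gives
\[
\nabla_j (\E^i f) = \E^i(\nabla_j f) - J_{ij}\,\cov^i\big(f,\ \nabla_j V(x_i,x_j)\big),
\]
where $\cov^i$ denotes covariance under $\E^{i,\omega}$ taken in the variable $x_i$. Squaring and using $\|a-b\|^2 \le (1+\epsilon)\|a\|^2 + (1+\epsilon^{-1})\|b\|^2$ separates the estimate into two pieces treated independently. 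For the first piece, Jensen's inequality gives $\|\E^i(\nabla_j f)\|^2 \le \E^i\|\nabla_j f\|^2$; integrating against $\nu$ and using the invariance $\nu\E^i = \nu$ contributes exactly the $D_1\,\nu\|\nabla_j f\|^2$ term.

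For the covariance piece I would first invoke the structural assumption (\ref{H2.2}) (in the $x_j$-variable) to write $\nabla_j V = \tilde U\,\nabla_j\dd$ with $\tilde U\ge 0$, and pull the factor $\nabla_j\dd$ out of the $x_i$-covariance; since $\|\nabla_j\dd\|\le\tau$ by (\ref{H2.3}), this reduces everything to the scalar covariance $\cov^i(f,\tilde U)$. Applying Cauchy--Schwarz and then the single-site spectral gap inequality of Lemma \ref{spectralgap1d} yields
\[
|\cov^i(f,\tilde U)|^2 \le c_p\,\big(\E^i\|\nabla_i f\|^2\big)\,\big(\E^i \tilde U^2\big).
\]
It then remains to control the coefficient $\E^i\tilde U^2$. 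From (\ref{H2.2})--(\ref{H2.3}) one has $\tilde U \le \xi^{-1}\|\nabla_j V\|$, so (\ref{H2.8}) gives the pointwise bound $\tilde U^2 \le \xi^{-2}k\,(1 + \dd^s(x_i) + \dd^s(x_j))$ with $s\le p$. Taking $\E^i$ and applying the coercive inequality of Lemma \ref{coers} (with $f\equiv1$ and $r=s\le p$) bounds $\E^i\dd^s(x_i)$ by a constant uniformly in the boundary data, leaving $\E^i\tilde U^2 \le \mathrm{const}\cdot(1+\dd^s(x_j))$.

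Collecting these bounds, integrating against $\nu$, and using that $\dd^s(x_j)$ is $\E^i$-measurable together with $\nu\E^i=\nu$, the covariance piece produces, besides a clean $\nu\|\nabla_i f\|^2$ contribution, a residual distance-weighted term of the shape $\mathrm{const}\cdot J_{ij}^2\,\nu\big[\dd^s(x_j)\,\|\nabla_i f\|^2\big]$. This residual term is the main obstacle and is precisely the signature of the higher-than-quadratic interaction: unlike the quadratic case, the mixed sensitivity $\nabla_j V$ is unbounded and grows like $\dd^s(x_j)$, so one cannot simply extract a constant. To dispose of it I would exploit the domination hypothesis $s\le p$, rewriting $\dd^s(x_j)\le 1+\dd^p(x_j)\le 1+H^j$ (using (\ref{H2.5}) and nonnegativity of $\phi,V_j$), and then apply the second coercive estimate of Lemma \ref{coers} in the $x_j$-variable to $\|\nabla_i f\|$, the point being that the Hörmander fields at the distinct sites $i$ and $j$ commute. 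The delicate accounting here is to ensure that the factor $J_{ij}^2$, combined with the coercive constants supplied by the domination $s\le p$, keeps the coefficient of $\nu\|\nabla_i f\|^2$ strictly below $1$ (so that $D_2<1$ in the regime where the coupling is sufficiently weak relative to the single-site gap), while all remaining contributions are absorbed into $D_1$; establishing this strict inequality, rather than merely a finite bound, is the crux of the argument.
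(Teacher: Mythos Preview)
Your opening decomposition and the first (diagonal) piece are fine and match the paper. The gap is in the treatment of the residual weighted term $J_{ij}^2\,\nu\big[\dd^s(x_j)\,\|\nabla_i f\|^2\big]$. Applying Lemma~\ref{coers} in the $x_j$-variable to the function $g=\|\nabla_i f\|$ yields
\[
\E^j\big[\dd^s(x_j)\,\|\nabla_i f\|^2\big]\ \le\ C_0\,\E^j\big\|\nabla_j\|\nabla_i f\|\big\|^2 + C_0\,\E^j\|\nabla_i f\|^2,
\]
and the first term on the right involves the mixed second derivatives $X_j^\beta X_i^\alpha f$. Commutation of the H\"ormander fields at the distinct sites $i$ and $j$ only tells you that $X_j^\beta X_i^\alpha f=X_i^\alpha X_j^\beta f$; it does \emph{not} bound this second-order object by $\|\nabla_i f\|^2$ or $\|\nabla_j f\|^2$. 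So as written the argument does not close at first order, and you cannot arrive at an inequality of the claimed shape with constants depending only on $f$ through $\nu\|\nabla_i f\|^2$ and $\nu\|\nabla_j f\|^2$.

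The paper's route differs at the precise step where you apply Cauchy--Schwarz followed by the spectral gap. Instead of factorizing the covariance into $\mathrm{Var}^i(f)\cdot\E^i\tilde U^2$, the paper uses Jensen in the form $\big(\E^i[(f-\E^i f)\,X_j^\alpha H^i]\big)^2\le \E^i\big[(f-\E^i f)^2\,(X_j^\alpha H^i)^2\big]$, so that the unbounded weight $\dd^s(x_j)$ remains multiplied by $(f-\E^i f)^2$ rather than by $\|\nabla_i f\|^2$. The coercive estimate in the $x_j$-variable is then applied to the test function $(f-\E^i f)$, producing $\|\nabla_j(f-\E^i f)\|^2\le 2\|\nabla_j f\|^2+2\|\nabla_j(\E^i f)\|^2$; the last, self-referential, term is the very quantity being estimated and is absorbed into the left-hand side (a bootstrap), after which the spectral gap controls the remaining variance. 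In short: postpone the spectral gap, keep $(f-\E^i f)^2$ intact next to the weight, and let the coercive inequality feed back into $\nu\|\nabla_j(\E^i f)\|^2$ rather than into second derivatives. Your scheme can be repaired along exactly these lines by dropping the early spectral-gap step.
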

\begin{proof}
Fix $i \in \Z^d$ and let $j$ be one of its neighbors.
We compute $(X^\alpha_j (\E^i f))^2$. Letting $\rho_i$ be the 
density of $\E^i$ with respect to $dx_i$, we have,
using Leibniz' rule and $(a+b)^2 \le 2 a^2+2 b^2$,
\begin{align}
(X^\alpha_j (\E^i f))^2 
&=\bigg( 
\int \rho_i (X^\alpha_j f) dx_i
+ \int (X^\alpha_j \rho_i) f dx_i  
\bigg)^2
\le 
2 \E^i (X^\alpha_j f)^2
+ 2\bigg( \int (X^\alpha_j \rho_i) f dx_i\bigg)^2,
\label{E4.1}
\end{align}
where we used Jensen's inequality to pass in the square inside the
expectation in the first term.
If we sum over $\alpha$ and integrate over $\nu$, the first term
on the right becomes $\nu \|\nabla_j f\|^2$, which is what we need.
For the second term, we need to take into account the specific form of
the density $\rho_i = e^{-H^i}/Z^i$. Note that $H^i$ depends on $x_i$
and the variables $x_\ell$, where $\ell$ ranges over the neighbors
of $i$, including $j$, but $Z^i$ does not depend on $x_i$. 
Taking this into account and using Leibniz' rule again, 
we easily arrive at
\begin{equation}
\label{E4.2}
\int (X^\alpha_j \rho_i) f dx_i
= - \E^i[ f(X^\alpha_j H^i - \E^i (X^\alpha_j H^i))]
= - \E^i[(f-\E^i f)\, (X^\alpha_j H^i)].
\end{equation}

The computation for the last one is as follows:
$X_j \rho_i = (X_j e^{-H^i})/Z^i - e^{-H^i}(X_j Z^i)/ (Z^i)^2$.
But $X_j e^{-H^i} = - e^{-H^i} (X_j H^i)$, and
$$X_j Z^i = X_j \int e^{-H^i} dx_i = - \int e^{-H^i}(X_j H^i) dx_i.$$
So $X_j \rho_i = -(e^{-H^i}/Z^i) (X_j H^i)
+ (e^{-H^i}/Z^i) \int (e^{-H^i}/Z^i) (X_j H^i) dx_i 
= -\rho_i (X_j H^i) + \rho_i \int \rho_i (X_j H^i) dx_i$

At this point, we use Jensen's inequality again,
\begin{equation}
\label{E4.3}
\left( \int (X^\alpha_j \rho_i) f dx_i\right)^2
\le \E^i[(f-\E^i f)^2\, (X^\alpha_j H^i)^2],
\end{equation}
and then take into account the specific form of $H^i$.
Since the differential operator $X_j^\alpha$ acts on $x_j$,
only the one of the interactions terms survives, giving
\begin{equation}
\label{E4.4}
X^\alpha_j H^i = J_{ij} X^\alpha_j V(x_i,x_j).
\end{equation}
Therefore, using (\ref{H2.8})
\begin{align*}
\sum_{\alpha=1}^n \left( \int (X^\alpha_j \rho_i) f dx_i\right)^2
&\le J_{ij}^2 \, \E^i \big[ (f-\E^i f)^2 \|\nabla_j V(x_i,x_j)\|^2\big]
\\
&\le k J_{ij}^2 \E^i (f-\E^if)^2 
+  k J_{ij}^2 \E^i (f-\E^if)^2 \dd(x_i)^s
+ k J_{ij}^2 \E^i (f-\E^if)^2 \dd(x_j)^s.
\end{align*}
Summing up the first display of this proof over $\alpha$
and integrating over $\nu$ we obtain 
\begin{multline}
\label{E4.5}
\nu \|\nabla_j(\E^i f)\|^2 \le 2 \nu \|\nabla_j f\|^2
+ 2k J_{ij}^2 \nu[ (f-\E^if)^2 ]
\\
+  2k J_{ij}^2 \nu[ (f-\E^if)^2 \dd(x_i)^s]
+ 2k J_{ij}^2 \nu[ (f-\E^if)^2 \dd(x_j)^s].
\end{multline}
From the single-site  coercive inequality of Lemma \ref{coers},
\begin{multline}
\nu[ (f-\E^if)^2 \dd(x_i)^s] 
= \nu \E^i [ (f-\E^if)^2 \dd(x_i)^s]
\le C_0\, \nu \|\nabla_i f\|^2 + C_0\, \nu [(f-\E^if)^2]
\label{E4.6}
\end{multline}
and
\begin{multline}
\nu[ (f-\E^if)^2 \dd(x_j)^s] 
= \nu \E^j [ (f-\E^if)^2 \dd(x_j)^s]
\le C_0\, \nu \|\nabla_j (f-\E^i f)\|^2 + C_0\, \nu [(f-\E^if)^2],
\\ 
\le 2C_0\, \nu \|\nabla_j f\|^2
+ 2C_0\, \nu \|\nabla_j(\E^i f)\|^2
+ C_0\, \nu [(f-\E^if)^2].
\label{E4.7}
\end{multline}
Substituting these last two into \eqref{E4.5} gives
\begin{multline*} 
\nu \|\nabla_j(\E^i f)\|^2  
\le (2+4kJ_{ij}^2 C_0)\nu\|\nabla_j f\|^2+  2kJ_{ij}^2 (1+2C_0) \nu [(f-\E^if)^2] 
+2kJ_{ij}^2 C_0\, \nu\|\nabla_i f\|^2
\\
+ 4kJ_{ij}^2 C_0\, \nu \|\nabla_j(\E^i f)\|^2.
\end{multline*}
We can now use the Poincare inequality from Lemma \ref{spectralgap1d} to bound the variance
\begin{multline*} 
\nu \|\nabla_j(\E^i f)\|^2  
\le (2+4kJ_{ij}^2 C_0)\nu\|\nabla_j f\|^2+  2kJ_{ij}^2 (c_p+C_0+2c_pC_0) \nu \|\nabla_i f\|^2+
\\
+ 4kJ_{ij}^2 C_0\, \nu \|\nabla_j(\E^i f)\|^2.
\end{multline*}
Equivalently, we can write
\begin{multline*} 
(1-4kC_0 J_{ij}^2) \, \nu \|\nabla_j(\E^i f)\|^2  
\le (2+4kJ_{ij}^2 C_0)\nu\|\nabla_j f\|^2+  2kJ_{ij}^2 (c_p+C_0+2c_pC_0) \nu \|\nabla_i f\|^2.
\end{multline*}
We now need to make sure that $1-4kC_0 J^2 > 0$, i.e., 
that $J < (4kC_0)^{-1/2}$ and that $2kJ_{ij}^2 (c+C_0+2cC_0)  /(1-4kC_0 J^2)<1$,
that is, $2kJ_{ij}^2 (c_p+C_0+2c_pC_0) +4kC_0 J^2<1$, or $J<(2kc_p+4kc_pC_0+6kC_0 )^{-1/2}$. 
But the latter inequality implies the former. So it is only the
latter that we need.
Therefore the inequality holds with
$D_1 := (2+4kC_0 J^2)/(1-4kC_0 J^2)$
and
$D_2 := 2kJ_{ij}^2 (c_p+C_0+2c_pC_0)$,
provided that $J<(2kc_p+4kc_pC_0+6kC_0 )^{-1/2}$.
\end{proof}

\begin{cor}\label{coers2}
Assume (\ref{H2.1})-(\ref{Hgeo}). For some $D_3 > 0$, if $i, j$ are neighbors in $\Z^d$, then
\[
\nu[(f-\E^i f)^2 \dd(x_j)^s] \le  D_3 \nu \|\nabla_j f\|^2
+ D_3 \nu\|\nabla_i f\|^2
\]
and
\[
\nu[(f-\E^i f)^2 \dd(x_i)^s] \le  D_3 \nu\|\nabla_i f\|^2.
\]\end{cor}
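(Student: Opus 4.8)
The plan is to recognize that this corollary is essentially a repackaging of estimates already established inside the proof of Lemma \ref{sweep1}, glued together with the Poincar\'e inequality of Lemma \ref{spectralgap1d}. The only genuinely new ingredient needed is to globalize the single-site Poincar\'e bound: integrating $\E^i(f-\E^if)^2 \le c_p \E^i\|\nabla_i f\|^2$ against $\nu$ and using the invariance $\nu\E^i = \nu$ gives the global variance estimate $\nu[(f-\E^if)^2] \le c_p\, \nu\|\nabla_i f\|^2$. With this in hand, both displayed inequalities fall out by absorbing the variance terms that appear in the intermediate bounds.

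First I would prove the second (simpler) inequality. Estimate \eqref{E4.6} already reads $\nu[(f-\E^i f)^2 \dd(x_i)^s] \le C_0\, \nu\|\nabla_i f\|^2 + C_0\, \nu[(f-\E^if)^2]$, so substituting the globalized Poincar\'e bound for the variance term immediately yields $\nu[(f-\E^i f)^2 \dd(x_i)^s] \le C_0(1+c_p)\, \nu\|\nabla_i f\|^2$, which is of the required form with $D_3 = C_0(1+c_p)$. For the first inequality I would start from \eqref{E4.7}, namely
\[
\nu[(f-\E^i f)^2 \dd(x_j)^s] \le 2C_0\, \nu\|\nabla_j f\|^2 + 2C_0\, \nu\|\nabla_j(\E^i f)\|^2 + C_0\, \nu[(f-\E^if)^2].
\]
The sweeping term $\nu\|\nabla_j(\E^i f)\|^2$ is controlled by Lemma \ref{sweep1} by $D_1\,\nu\|\nabla_j f\|^2 + D_2\,\nu\|\nabla_i f\|^2$, and the variance term is again controlled by the Poincar\'e bound $c_p\,\nu\|\nabla_i f\|^2$. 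Collecting the coefficients of $\nu\|\nabla_j f\|^2$ and of $\nu\|\nabla_i f\|^2$ separately and taking $D_3$ to be the larger of the two (concretely $D_3 = \max\{2C_0(1+D_1),\, 2C_0 D_2 + C_0 c_p\}$) gives the claim.

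I do not expect any real obstacle here, which is exactly why the statement is phrased as a corollary: all the analytic work was done in Lemma \ref{coers}, Lemma \ref{spectralgap1d} and Lemma \ref{sweep1}, and what remains is bookkeeping of constants. The only point worth a moment's care is ensuring that the constants $D_1, D_2, c_p, C_0$ are all finite and that the coefficient rearrangement uses no hidden smallness condition on $J$; since Lemma \ref{sweep1} already presupposes $J < (2kc_p + 4kc_pC_0 + 6kC_0)^{-1/2}$, that restriction is inherited, but the final constant $D_3$ itself needs no further constraint and stays finite uniformly in the boundary conditions.
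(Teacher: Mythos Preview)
Your proposal is correct and follows exactly the same route as the paper: the paper also derives the second inequality from \eqref{E4.6} combined with the Poincar\'e bound of Lemma \ref{spectralgap1d}, and the first inequality from \eqref{E4.7} together with Lemma \ref{sweep1} and again the Poincar\'e bound. The only difference is in the explicit value of $D_3$ (the paper records $D_3 = 2C_0(4+c_p)$), which is immaterial since the statement only asserts existence of some $D_3>0$.
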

\begin{proof}
For the first  assertion,  replace $\nu\|\nabla_j(\E^i f)\|^2$ on the right-hand side of \eqref{E4.7} 
by its upper bound from the inequality  in the statement of Lemma \ref{sweep1},
and bound the last term 
from the spectral gap inequality from Lemma \ref{spectralgap1d}. Similarly, the second assertion of the corollary follows from (\ref{E4.6}) and Lemma \ref{spectralgap1d}, for a constant  $D_3:=2C_{0}(4+c_p)$.
 \end{proof}

Next, let, for $r =0,1, \ldots, d-1$, the set $\Gamma_r$
be defined by
\[
\Gamma_r := \{i \in \Z^d:\, i_1+\cdots+i_d \equiv r \mod d\}.
\]
Note that the sets $\Gamma_r$, $r =0,1, \ldots, d-1$,
form a partition of $\Z^d$ and
$\inf\{\max_{1\le k\le d} |i_k-j_k|:\, i \in \Gamma_r, j \in \Gamma_s\}=1$
if $r \neq s$.

From now on, we shall work with the case $d=2$, for simplicity of notation.
The general case is analogous.

\begin{lem}\label{sweepGam1}Assume (\ref{H2.1})-(\ref{Hgeo}).
There are constants $R_1>0$ and $0<R_2 < 1$ such that
 \[
\nu \|\nabla_{\Gamma_0} (\E^{\Gamma_1} f)\|^2 
 \le R_1 \nu \| \nabla_{\Gamma_0} f\|^2 + R_2 \nu \| \nabla_{\Gamma_1} f\|^2\]
and
\[
\nu \|\nabla_{\Gamma_1} (\E^{\Gamma_0} f)\|^2 
 \le R_1 \nu \| \nabla_{\Gamma_1} f\|^2 + R_2 \nu \| \nabla_{\Gamma_0} f\|^2.
\]
\end{lem}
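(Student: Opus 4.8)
The plan is to mimic the single-site argument of Lemma \ref{sweep1}, upgraded from a single site to the whole block $\E^{\Gamma_1}$ by exploiting that the sites of $\Gamma_1$ are pairwise non-adjacent. By the partition property recorded above, any two points of $\Gamma_1$ are at lattice distance strictly greater than $1$, so $\E^{\Gamma_1}=\prod_{i\in\Gamma_1}\E^i$ is a product of commuting single-site conditional expectations; moreover every neighbour of a site $j\in\Gamma_0$ lies in $\Gamma_1$. Writing $\nu\|\nabla_{\Gamma_0}(\E^{\Gamma_1}f)\|^2=\sum_{j\in\Gamma_0}\sum_\alpha \nu(X_j^\alpha \E^{\Gamma_1}f)^2$, I would fix $j\in\Gamma_0$ and differentiate under the product. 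Since $x_j$ enters only through $f$ and through the densities $\rho_i=e^{-H^i}/Z^i$ of those $i\in\Gamma_1$ neighbouring $j$, Leibniz' rule gives $X_j^\alpha \E^{\Gamma_1}f=\E^{\Gamma_1}(X_j^\alpha f)-\sum_{i\sim j}\E^{\Gamma_1}[f(X_j^\alpha H^i-\E^i X_j^\alpha H^i)]$, exactly as in \eqref{E4.2} but now carrying a sum over the at most $2d$ neighbours $i$ of $j$.

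Next I would apply $(a_0+a_1+\cdots+a_m)^2\le (m+1)\sum_k a_k^2$ together with Jensen to separate the leading term, which contributes at most $(2d+1)\,\E^{\Gamma_1}\|\nabla_j f\|^2$ and, after the invariance $\nu\E^{\Gamma_1}=\nu$, sums to a multiple of $\nu\|\nabla_{\Gamma_0}f\|^2$. Each cross term factors through a single $\E^i$: using $\E^i(X_j^\alpha H^i-\E^i X_j^\alpha H^i)=0$ one may replace $f$ by $f-\E^i f$, and two applications of Jensen bound $(\E^{\Gamma_1}[f(X_j^\alpha H^i-\E^i X_j^\alpha H^i)])^2$ by $\E^{\Gamma_1}[(f-\E^i f)^2(X_j^\alpha H^i)^2]$, just as in \eqref{E4.3}. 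Since $X_j^\alpha H^i=J_{ij}X_j^\alpha V(x_i,x_j)$, summing over $\alpha$ and invoking (\ref{H2.8}) produces precisely the three single-site quantities $\nu[(f-\E^i f)^2]$, $\nu[(f-\E^i f)^2\dd(x_i)^s]$ and $\nu[(f-\E^i f)^2\dd(x_j)^s]$, already controlled by Corollary \ref{coers2} and the spectral gap of Lemma \ref{spectralgap1d}. This yields, for each pair $i\sim j$, a bound of the form $kJ_{ij}^2[(c_p+2D_3)\,\nu\|\nabla_i f\|^2+D_3\,\nu\|\nabla_j f\|^2]$.

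Finally I would sum over $j\in\Gamma_0$ and over the neighbours $i\sim j$. Because each $j\in\Gamma_0$ has at most $2d$ neighbours in $\Gamma_1$, and symmetrically each $i\in\Gamma_1$ is a neighbour of at most $2d$ sites of $\Gamma_0$, the two double sums collapse into multiples of $\nu\|\nabla_{\Gamma_0}f\|^2$ and $\nu\|\nabla_{\Gamma_1}f\|^2$ respectively. Setting $J:=\max_{ij}J_{ij}$ this gives $R_1=(2d+1)+cJ^2D_3$ and $R_2=c'J^2(c_p+2D_3)$ for explicit combinatorial constants $c,c'$, and the requirement $R_2<1$ is secured by taking $J$ small enough. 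The second (symmetric) inequality follows by interchanging the roles of $\Gamma_0$ and $\Gamma_1$ verbatim.

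The main obstacle is bookkeeping rather than a new idea: one must check that the cross terms coming from distinct neighbours $i$ genuinely decouple, each depending only on its own $\E^i$ and hence being reducible to the single-site estimates, and then track the degree of the interaction graph so that $R_2$ can be forced below $1$ by shrinking $J$. The one conceptual point is that the non-adjacency of $\Gamma_1$ is exactly what turns $\E^{\Gamma_1}$ into a product and lets each cross term be handled by Corollary \ref{coers2}; without it the interactions among the integrated-out sites would not separate and the reduction to single-site quantities would fail.
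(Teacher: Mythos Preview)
Your argument is correct and takes a genuinely different route from the paper's. The paper does not differentiate the full product $\E^{\Gamma_1}$ in one stroke; instead, for each $i\in\Gamma_1$ it first uses Jensen and the product structure to reduce $\nu\|\nabla_i(\E^{\Gamma_0}f)\|^2$ to $\nu\|\nabla_i(\E^{\partial\{i\}}f)\|^2$, and then peels off the four neighbours $j_1,\ldots,j_4$ of $i$ one at a time by iterating the single-site estimate of Lemma~\ref{sweep1}, obtaining the explicit constants $R_1=D_1^4$ and $R_2=4D_1^3D_2$. Your approach, by contrast, applies Leibniz to $\E^{\Gamma_1}$ in a single step, uses the non-adjacency of $\Gamma_1$ so that each cross term involves only one $\E^i$, and feeds those terms directly into Corollary~\ref{coers2} and the spectral gap of Lemma~\ref{spectralgap1d}, bypassing Lemma~\ref{sweep1} altogether. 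The paper's route is more modular---once Lemma~\ref{sweep1} is in hand, the present lemma is pure iteration---but the iteration amplifies constants: the crucial coefficient $R_2$ picks up the factor $D_1^3$ and hence forces a smaller threshold on $J$. Your direct argument avoids this amplification and produces an $R_2$ that is simply $O(J^2)$ with combinatorial prefactors, yielding a somewhat larger admissible range for the coupling.
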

\begin{proof}
Fix $i \in \Gamma_1$. Denote by $\partial\{i\}$ the set $\{i\pm e_1, i\pm e_2\}$
of the $2d=4$ neighbors of $i$. Since $\partial \{i\} \subset \Gamma_0$,
we can write $\E^{\Gamma_0} f = \E^{\Gamma_0\setminus \partial\{i\}} \E^{\partial\{i\}}f$.
Hence if $X^\alpha_i$ is one of the H\"ormander generators
 of $\Spin^{\{i\}}$, we have
$X^\alpha_i \E^{\Gamma_0} f = \E^{\Gamma_0\setminus \partial\{i\}} X^\alpha_i(\E^{\partial\{i\}}f)$.
By Jensen's inequality,
$(X^\alpha_i \E^{\Gamma_0} f)^2 
\le \E^{\Gamma_0 \setminus \partial\{i\}}[ (X^\alpha_i(\E^{\partial\{i\}}f))^2]$.
Summing over all $\alpha$, we get
$\|\nabla_i \E^{\Gamma_0} f\|^2 
\le \E^{\Gamma_0 \setminus \partial\{i\}} \|\nabla_i(\E^{\partial\{i\}}f)\|^2$.
Integrating over $\nu$ and using $\nu \E^{\Gamma_0 \setminus \partial\{i\}}=\nu$, we get
$\nu \|\nabla_i \E^{\Gamma_0} f\|^2
\le \nu \|\nabla_i(\E^{\partial\{i\}}f)\|^2$.
Summing this over $i \in \Gamma_1$ we have
\[
\nu\|\nabla_{\Gamma_1} (\E^{\Gamma_0} f)\|^2
\le \sum_{i\in \Gamma_1} \nu \|\nabla_i(\E^{\partial\{i\}}f)\|^2.
\]
We estimate the term inside the sum using Lemma \ref{sweep1}  as follows.
First let $\partial\{i\} = \{i+e_1, i+e_2, i-e_1,i-e_2\}
=\{j_1, j_2, j_3, j_4\}$.
Then $\nabla_i (\E^{\partial\{i\}}f) 
= \nabla_i \E^{\{j_1\}} \E^{\{j_2,j_3,j_4\}} f$.
So
\[
\nu\|\nabla_i (\E^{\partial\{i\}}f)\|^2
= \nu \|\nabla_i \E^{\{j_1\}}  \E^{\{j_2,j_3,j_4\}} f\|^2
\le D_1 \nu\|\nabla_i \E^{\{j_2,j_3,j_4\}} f\|^2
+ D_2 \nu\|\nabla_{j_1} \E^{\{j_2,j_3,j_4\}} f\|^2.
\]
For the second term we have 
$\nabla_{j_1} \E^{\{j_2,j_3,j_4\}} f
= \E^{\{j_2,j_3,j_4\}} \nabla_{j_1} f$ and so, by Jensen's inequality,
\[
\nu\|\nabla_{j_1} \E^{\{j_2,j_3,j_4\}} f\|^2
\le \nu \E^{\{j_2,j_3,j_4\}} \|\nabla_{j_1} f\|^2
= \nu  \|\nabla_{j_1} f\|^2.
\]
The first term is estimated using Lemma \ref{sweep1} once more:
\[
\nu\|\nabla_i \E^{\{j_2,j_3,j_4\}} f\|^2
= \nu  \|\nabla_i \E^{j_2} \E^{\{j_3,j_4\}} f\|^2
\le D_1 \nu\|\nabla_i \E^{\{j_3,j_4\}} f\|^2
+ D_2 \nu \|\nabla_{j_2} \E^{\{j_3,j_4\}} f\|^2.
\]
Continuing in this manner, we obtain (observe that $D_1 > 1$)
\begin{multline*}
\nu\|\nabla_i (\E^{\partial\{i\}}f)\|^2
\\
\le D_1^4 \nu\|\nabla_i f\|^2
+ D_1^3 D_2 \nu \|\nabla_{j_4} f\|^2
+ D_1^2 D_2 \nu \|\nabla_{j_3} f\|^2
+ D_1 D_2 \nu \|\nabla_{j_2} f\|^2
+ D_2 \nu \|\nabla_{j_1} f\|^2
\\
\le D_1^4 \nu\|\nabla_i f\|^2 + D_1^3 D_2 \sum_{j \in \partial\{i\}}
\|\nabla_j f\|^2.
\end{multline*}
Summing up over all $i \in \Gamma_1$,
\[
\nu \|\nabla_{\Gamma_1}(\E^{\Gamma_0} f)\|^2
\le D_1^4 \nu\|\nabla_{\Gamma_0} f\|^2 
+ 4 D_1^3 D_2 \nu\|\nabla_{\Gamma_1} f\|^2.
\]
We need to make sure that $4D_1^3 D_2 < 1$. Substituting the
actual expressions for these constants we can see that this
inequality is satisfied for all sufficiently small positive $J$.
In particular, the inequality is true
for all $J < (80 k(c+2cC_0+2C_0))^{-1/2}$.
We have thus proved the second inequality with $R_1 :=D_1^4$
and $R_2 := 4 D_1^3 D_2$, provided that
$J < (192( k^2+k)(c+2cC_0+2C_0+C_0^3))^{-1/2}$.
\end{proof}
Define now the symbol $\Q^n$ to be $\Q^0 f=f$ and $\Q^n := \E^{\Gamma_0} \Q^{n-1}$ when $n$ is odd and  $\Q^n := \E^{\Gamma_1} \Q^{n-1}$ when $n$ is even, 
with the understanding that $\Q^{n}$ when $n$ is even takes a functional $g$ on $\Spin^{\Z^d}$,
integrates with respect to $\P^{\Gamma_1, x_{\partial \Gamma_0}}(dx_{\Gamma_1})
= \P^{\Gamma_1, x_{\Gamma_0}}(dx_{\Gamma_1})$
 so that $\Q^{n} g$ is a functional not depending on $x_{\Gamma_1}$. Analogously,   $\Q^{n} g$ for $n$ odd is a functional not depending on $x_{\Gamma_0}$. We used the fact that $\partial \Gamma_0 = \Gamma_1$ and
$\partial \Gamma_1 = \Gamma_0$.

\begin{lem}\label{conv}
Under hypotheses (\ref{H2.1})-(\ref{Hgeo}),  we  have
that $\lim_{n \to \infty} \Q^n f = \nu f$, $\nu$-a.e.
\end{lem}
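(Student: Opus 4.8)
The plan is to show that the Dirichlet form $\nu\|\nabla\Q^n f\|^2$ decays geometrically, and then to upgrade this to $L^2(\nu)$ and $\nu$-almost sure convergence of $\Q^n f$ to the constant $\nu f$. Throughout I take $f$ nice enough that $\nu\|\nabla f\|^2<\infty$ (e.g.\ a smooth cylinder function), extending to the general case by approximation at the end.

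First I would record the elementary but crucial observation that $\Q^n$ alternately annihilates one of the two gradient blocks: for $n$ even $\Q^n f$ does not depend on $x_{\Gamma_1}$, so $\nabla_{\Gamma_1}\Q^n f=0$, while for $n$ odd $\Q^n f$ does not depend on $x_{\Gamma_0}$, so $\nabla_{\Gamma_0}\Q^n f=0$. Writing $g_n:=\nu\|\nabla\Q^n f\|^2$, exactly one summand survives at each stage. Passing from $\Q^n$ to $\Q^{n+1}=\E^{\Gamma_0}\Q^n$ (when $n$ is even) or $\Q^{n+1}=\E^{\Gamma_1}\Q^n$ (when $n$ is odd) and applying the matching inequality of Lemma \ref{sweepGam1} to $\Q^n f$, the term carrying the factor $R_1$ multiplies a gradient block that vanishes, leaving $g_{n+1}\le R_2\,g_n$. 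Since $0<R_2<1$, this gives $g_n\le R_2^{\,n}\,\nu\|\nabla f\|^2\to 0$, which is the heart of the argument.

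Next I would turn the decay of $g_n$ into convergence of $\Q^n f$ itself. Each $\E^{\Gamma_r}$ is a product, over the mutually non-adjacent sites of $\Gamma_r$, of the single-site kernels $\E^{\{i\},\omega}$, each of which satisfies the spectral gap inequality with the uniform constant $c_p$ of Lemma \ref{spectralgap1d}; hence the product $\E^{\Gamma_r}$ obeys the spectral gap with the same constant. Using that $\E^{\Gamma_r}$ is the $\nu$-conditional expectation given the complementary coordinates, together with $\nu\,\E^{\Gamma_r}=\nu$, the conditional-variance identity $\nu[(\Q^n f-\E^{\Gamma_r}\Q^n f)^2]=\nu\,\E^{\Gamma_r}[(\Q^n f-\E^{\Gamma_r}\Q^n f)^2]$ and this spectral gap yield $\nu(\Q^{n+1}f-\Q^n f)^2\le c_p\,\nu\|\nabla_{\Gamma_r}\Q^n f\|^2\le c_p\,g_n\le c_p R_2^{\,n}\,\nu\|\nabla f\|^2$. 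The right-hand side is summable in $n$, so $\sum_n\|\Q^{n+1}f-\Q^n f\|_{L^2(\nu)}<\infty$; consequently $(\Q^n f)$ is Cauchy in $L^2(\nu)$, and summability of the $L^1(\nu)$ norms forces $\sum_n|\Q^{n+1}f-\Q^n f|<\infty$ $\nu$-a.e., so $\Q^n f$ converges $\nu$-a.e.\ (and in $L^2(\nu)$) to a limit $F$.

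Finally I would identify $F$ with $\nu f$. The Dobrushin--Lanford--Ruelle equations give $\nu\,\Q^n=\nu$, hence $\nu F=\nu f$. Since each $\E^{\Gamma_r}$ is an $L^2(\nu)$-contraction (by Jensen and $\nu$-invariance), passing to the limit in $\Q^{n+1}=\E^{\Gamma_r}\Q^n$ along the two parities gives $\E^{\Gamma_0}F=F$ and $\E^{\Gamma_1}F=F$; the first makes $F$ independent of $x_{\Gamma_0}$ and the second makes it independent of $x_{\Gamma_1}$. As $\Gamma_0$ and $\Gamma_1$ partition $\Z^d$ and $\nu$ is, on every finite volume, equivalent to the product reference measure (its local densities $e^{-H}/Z$ being strictly positive), a function measurable with respect to both $\sigma(x_{\Gamma_0})$ and $\sigma(x_{\Gamma_1})$ is $\nu$-a.e.\ constant, whence $F=\nu F=\nu f$. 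I expect the main obstacle to be this last identification step: the contraction $g_{n+1}\le R_2\,g_n$ is an almost immediate consequence of Lemma \ref{sweepGam1}, whereas showing that the gradient-free limit is genuinely the constant $\nu f$ (rather than some non-constant function harmonic for both sweeping operators) requires the measure-theoretic argument above and, implicitly, the finiteness of $\nu\|\nabla f\|^2$, which restricts the class of $f$ for which the bare estimate applies and is removed by the density argument.
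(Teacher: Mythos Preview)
Your proof is correct and follows essentially the same route as the paper: both combine the uniform spectral gap for $\E^{\Gamma_r}$ (from Lemma~\ref{spectralgap1d} and the product property) with the sweeping estimate of Lemma~\ref{sweepGam1} to obtain geometric decay of $\nu(\Q^{n+1}f-\Q^n f)^2$, then upgrade to $\nu$-a.e.\ convergence via Borel--Cantelli, and finally identify the limit as the constant $\nu f$ by noting that it is simultaneously $\sigma(x_{\Gamma_0})$- and $\sigma(x_{\Gamma_1})$-measurable. The only cosmetic differences are that your contraction $g_{n+1}\le R_2\,g_n$ literally holds only from $n\ge 1$ (since $\Q^0f=f$ retains both gradient blocks, so $g_1\le (R_1+R_2)\nu\|\nabla f\|^2$), and that you identify $\nu F=\nu f$ directly via $L^2$-convergence and DLR, whereas the paper argues first for bounded positive $f$ by dominated convergence and then extends by monotone convergence and decomposition into positive and negative parts.
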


\begin{proof}
We will estimate the $\LL^2(\nu)$ norm of the differences of $\Q^n f$.
From the spectral gap inequality for $\E^{\Gamma_k},k=0,1$ (which follows
from the product property of the spectral gap and the spectral gap for the one node from  Lemma \ref{spectralgap1d}) we have
\[\E^{\Gamma_k}(\Q^{n} f-  \Q^{n+1} f)^2=\E^{\Gamma_k}(\Q^{n} f-\E^{\Gamma_k} \Q^{n} f)^2 \le c_p\,\E^{\Gamma_0}\|\nabla_{\Gamma_k}\Q^{n} f\|^2.\]
Integrating with respect to $\nu$ we have
\[\nu(\Q^{n} f-\E^{\Gamma_k} \Q^{n} f)^2 \le c_p\,\nu\|\nabla_{\Gamma_k}\Q^{n} f\|^2.\]
The last term is estimated from Lemma \ref{sweepGam1}, for $n \ge 2$,
\[
\nu[(\Q^n f-\Q^{n+1} f)^2] \le c_p (R_1 +R_{2})R_2^{n-1} \nu \|\nabla_{ }   f\|^2 \le R^n,
\]
for some $R$ (depending on $f$), with $0<R<1$.
Let $\epsilon>0$ be so small so that $R(1+\epsilon)<1$.
Then
\[
\nu\{x\in \Omega:\, |\Q^n f-\Q^{n+1}f| > (R(1+\epsilon))^{n/2}\}
\le \nu[(\Q^n f-\Q^{n+1} f)^2]/(R(1+\epsilon))^n \le (1+\epsilon)^{-n}.
\]
Hence
\[
\nu\{x\in \Omega:\, |\Q^n f - \Q^{n+1} f| \le (R(1+\epsilon))^{n/2}
\text{ for almost all } n\} = 1.
\]
By the triangle inequality,
\[
\nu\{x\in \Omega:\, |\Q^n f - \Q^m f| \le (R(1+\epsilon))^{n/2}/
(R(1+\epsilon))^{1/2} \text{ for all large $n$ and $m$}\}=1.
\]
Hence $\Q^n f$ converges $\nu$-a.e. say to,  $  \xi (f)$.      At first we will show that $\xi(f)$  is a constant  that  does not depend on variables neither on  $\Gamma_0$ nor on $\Gamma_1$. We first observe that  $\Q^n(f)$ is a function on $\Gamma_1$ or $\Gamma_0$ when $n$  is odd or even respectively. As a consequence the limits of the subsequences   $\lim_{n \text{\ odd}, n\rightarrow \infty}\Q^nf$  and $\lim_{n \text{\ even}, n\rightarrow \infty}\Q^nf$ do not depend on  variables on $\Gamma_0$ and $\Gamma_1$ respectively.    However, since  the two  subsequences $\{\Q^nf\}_{n\text{\ even}}$ and $\{\Q^nf\}_{n\text{\ odd}}$  converge to the same limit $\xi(f)$ $\nu-$a.e.  we conclude that \[\lim_{n \text{\ odd}, n\rightarrow \infty}\Q^nf=\xi(f)=\lim_{n \text{\ even}, n\rightarrow \infty}\Q^nf\]
 from which we derive that $\xi(f)$ is a constant. Furthermore, this implies that 
\[\nu \left(\xi\ (f) \right)=\xi(f).\]
To finish the proof, it remains to show that  $\xi(f)=\nu (f)$. One notices that since the sequence $\{\Q^n f\}_{n \in \mathbb{N}}$ 
 converges $\nu-$a.e, the same holds for  the sequence $\{\Q^n f-\nu \Q^n f\}_{n \in \mathbb{N}}$. 

At first assume  positive bounded functions  $f$. In this case we have 
 \[\lim_{n\rightarrow \infty}(\Q^n f-\nu \Q^n f)=\xi (f)-\nu\left(\xi(f) \right) =\xi (f)-\xi (f)=0\]
 by the dominated     convergence theorem and the fact that $\xi(f)$ is constant. On the other hand, we also have 
 \[\lim_{n\rightarrow \infty}(\Q^n f-\nu \Q^nf)=\lim_{n\rightarrow \infty}(\Q^n f-\nu f)=\xi\ (f)-\nu (f)
\] 
by the definition of the Gibbs measure $\nu$. From the last two we obtain $\xi (f)=\nu (f)$
for bounded positive functions $f$. We will extend this to   unbounded positive functions $f$. For this we consider $f_{k}(x):=\max\{f(x),k\}$ for any $k\in \mathbb{N}$. Then  \[\xi(f_{k})=\lim_{n\rightarrow \infty}\Q^nf_{k}= \nu f_{k}  \] $\nu$ a.e, since $f_{k}(x)$ is bounded by $k$. 
 But since $f^k$ is increasing on $k$, by the monotone convergence theorem we   get 
\begin{align*}\xi(f)=\lim_{k\rightarrow \infty}\xi(f_{k})=\lim_{k\rightarrow \infty}\nu(f_{k})=\nu(\lim_{k\rightarrow \infty}f_{k})=\nu(f)  \  \   \nu \  \  a.e. \end{align*}
The assertions   can be  extended to non-positive functions $f$ just by writing $f=\max\{f,0\}-\min\{f,0\}$.
\end{proof}

 \section{log-Sobolev inequality for one site measure.} \label{secLS}

In this section we show a weak version of the  log-Sobolev type inequality for the one site measure $\mathbb{E}^{i,\omega}$.

\begin{prop}\label{prop5.1}  Assume (\ref{H2.1})-(\ref{Hgeo}) and  that the  measure $\mu $ satisfies the log-Sobolev inequality with a constant $c$. Then, for $J$ sufficiently small uniformly on $\omega$,   the one site measure  $\mathbb{E}^{i,\omega}$ satisfies the following weak version of  a log-Sobolev  inequality \[\nu\mathbb{E}^{i,\omega}\left(  f^2\log\frac{f^2}{\mathbb{E}^{i,\omega}f^2}\right)\leq c_{1}\nu\|\nabla_i f\|^2 +c_{2}\sum_{ j\sim i}  \nu \|\nabla_j f\|^2  \]
for some    positive constants $c_{1}$ and $c_2<1$.
\end{prop}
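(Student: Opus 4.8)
The plan is to derive a \emph{defective} logarithmic Sobolev inequality for $\E^{i,\omega}$ by a perturbation argument from the assumed inequality for $\mu$, to control the resulting defect through the coercive bounds of Section \ref{section3}, and finally to \emph{tighten} the inequality (removing the residual $L^2$ term) by combining Rothaus' lemma with the spectral gap of Lemma \ref{spectralgap1d}.

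First I would exploit the relation $\E^{i,\omega}(dx_i) = Z'^{-1} e^{-W}\,\mu(dx_i)$, where $W := \sum_{j\sim i} J_j V_j \ge 0$ and $Z' := \mu(e^{-W}) \le 1$. Applying the log-Sobolev inequality for $\mu$ to $g := f e^{-W/2}$ and expanding $\|\nabla_i g\|^2 \le 2 e^{-W}\|\nabla_i f\|^2 + \tfrac12 e^{-W} f^2 \|\nabla_i W\|^2$, a direct computation relating $\operatorname{Ent}_\mu(g^2)$ to $\operatorname{Ent}_{\E^{i,\omega}}(f^2)$ yields
\[
\operatorname{Ent}_{\E^{i,\omega}}(f^2) \le 2c\,\E^{i}\|\nabla_i f\|^2 + \tfrac{c}{2}\,\E^{i}\big(f^2\|\nabla_i W\|^2\big) + \E^{i}\big(f^2 W\big) + \E^i(f^2)\log Z'.
\]
The crucial sign observation is that, since $W \ge 0$ forces $Z'\le 1$, the term $\E^i(f^2)\log Z'$ is nonpositive and may simply be discarded.

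Next I would bound the two remaining defect integrals. Since $\nabla_i W = \sum_j J_j \nabla_i V_j$, assumptions (\ref{H2.8}) and (\ref{H2.9}) give $\|\nabla_i W\|^2 + W \le C J\big(k + k\,\dd(x_i)^s + k\sum_{j\sim i}\dd(\omega_j)^s\big)$ for a combinatorial constant $C$. Integrating over $\nu$ and using $\nu\E^i = \nu$, the terms carrying $\dd(x_i)^s$ are absorbed by the single-site coercive inequality of Lemma \ref{coers}, producing $\nu\|\nabla_i f\|^2$ and $\nu f^2$. For the neighbour terms $\nu\E^i(f^2 \dd(x_j)^s)$ I would split $f^2 \le 2(f-\E^i f)^2 + 2(\E^i f)^2$: the first piece is handled by Corollary \ref{coers2}, giving $\nu\|\nabla_j f\|^2 + \nu\|\nabla_i f\|^2$, while for the second I set $g = \E^i f$ and apply Lemma \ref{coers} at site $j$ followed by Lemma \ref{sweep1} to control $\nu\|\nabla_j \E^i f\|^2$ by $\nu\|\nabla_j f\|^2 + \nu\|\nabla_i f\|^2$. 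The net outcome is
\[
\nu\,\operatorname{Ent}_{\E^{i,\omega}}(f^2) \le a_1\,\nu\|\nabla_i f\|^2 + a_2\sum_{j\sim i}\nu\|\nabla_j f\|^2 + b\,\nu f^2,
\]
where $a_2$ and $b$ are of order $J^2$ and hence small.

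Finally, to reach the stated form with \emph{no} $\nu f^2$ term and with $c_2<1$, I would tighten by Rothaus' lemma. Writing $\bar f = f - \E^i f$ and applying the inequality above to $\bar f$ (for which $\nabla_i \bar f = \nabla_i f$ and $\nabla_j\bar f = \nabla_j f - \nabla_j\E^i f$, the latter controlled by Lemma \ref{sweep1}), the residual term becomes $b\,\nu\bar f^2 = b\,\nu\,\var_{\E^i}(f)$, which the spectral gap of Lemma \ref{spectralgap1d} bounds by $b c_p\,\nu\|\nabla_i f\|^2$. Rothaus' inequality $\operatorname{Ent}_{\E^i}(f^2) \le \operatorname{Ent}_{\E^i}(\bar f^2) + 2\,\E^i\bar f^2$ then converts the entropy of $\bar f$ back to that of $f$ at the cost of one more variance term, again absorbed by the spectral gap. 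Collecting constants and taking $J$ small enough that the coefficient of $\sum_{j\sim i}\nu\|\nabla_j f\|^2$ stays below $1$ gives the claim with suitable $c_1$ and $c_2<1$. The main obstacle I anticipate is the neighbour term $\nu\E^i(f^2\dd(x_j)^s)$: it is the only place where the interaction genuinely couples sites, so it requires the centering split together with the sweeping-out estimate of Lemma \ref{sweep1}, all while keeping the resulting coefficient $O(J^2)$ so that the final $c_2$ can be forced below $1$.
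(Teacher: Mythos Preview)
Your proof is correct and uses the same three ingredients as the paper---the perturbation argument transferring the LSI from $\mu$ to $\E^i$, Rothaus' lemma, and the coercive/spectral-gap bounds of Section \ref{section3} together with Corollary \ref{coers2}---but you apply them in a different order. The paper invokes Rothaus \emph{first}, replacing $f$ by $f-\E^i f$ in the defective inequality \emph{before} integrating over $\nu$ and bounding the defect; this way every $\dd^s$ term already carries the centered factor $(f-\E^i f)^2$ and is disposed of directly by Corollary \ref{coers2} and the spectral gap, so Lemma \ref{sweep1} is never needed in this proof. Your route---bounding the defect with $f^2$ first and only then tightening---forces you to confront the uncentered quantity $\nu(f^2\dd(x_j)^s)$, which costs you the splitting $f^2\le 2(f-\E^i f)^2+2(\E^i f)^2$ and an extra appeal to Lemma \ref{sweep1} (both here and again when you pass from $\nabla_j\bar f$ back to $\nabla_j f$). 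Both arguments arrive at the same place; the paper's ordering is simply a bit more economical. One minor correction: since the defect contains the linear term $W=\sum_j J_j V_j$ and not only $\|\nabla_i W\|^2$, your coefficients $a_2$ and $b$ are of order $J$ rather than $J^2$---but this has no effect on the argument, as $O(J)$ is still small for $J$ small.
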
          

\begin{proof} 
  We begin with the main assumption on  the measure $\mu(dx_i)=\frac{e^{\phi(x_i)}dx_i}{\int \phi(x_i)dx_i}$, that it  satisfies a  log-Sobolev inequality with a constant $c$   
\[\mu  (f^2\log\frac{f^2}{\mu f^2})\leq c\mu \| \nabla_i f \|^2\]
We will interpolate the phase $\phi$ by the interactions $W^i:=\sum_{j \sim i}J_{ij}V(x_i,\omega_j)$ in order to form the Hamiltonian     of the   one site measure $\mathbb{E}^{ i,\omega}$. To achieve  this,   replace $f$ by      $e^\frac{-W ^i}{2} f$, 
 \begin{align}\int e^{-H^i} &f^2\log\frac{e^{-W^i} f^2}{\int (e^{-H^i} f^2)dx_{i} /  \int e^{-\phi(x_i)}
 dx_{i}})dx_{i} 
  \label{E5.1}\leq c  \int e^{- \phi(x_i)} \| \nabla_{i} (e^\frac{-W^i}{2} f)
\|^2 dx_{i}.\end{align}
 \noindent We denote  by  $D_l$ and $D_r$  the  left and right hand side of~\eqref{E5.1} respectively.
Use the Leibnitz rule
for the gradient on $D_r$, to bound $\| \nabla_{i} (e^\frac{-W^i}{2} f) \|^2 \leq 2  e^{-W^i}\| \nabla_{i}  f \|^2+ \frac{1}{2}  e^{-W^i}f^2\| \nabla_{i}  W^i \|^2 $, so that  
 \begin{align}      D_r\leq   \label{E5.2} \left( \int e^{-H^i}dx_{i} \right) \left( 2c\mathbb{E}^{ i,\omega}\|\nabla_{i} f
\|^2+\frac{ c}{2}\mathbb{E}^{ i,\omega}(f^{2}  \|  \nabla_{i}{W^i}
\|^2)\right).\end{align}
On the left hand side of (\ref{E5.1}) we form the Hamiltonian $H^i=\phi(x_i)+W^i$ to  obtain the entropy for the measure $\mathbb{E}^{ i,\omega}$
\begin{align*}\nonumber D_l = &\int e^{-H^i} f^2log\frac{f^2}{\int e^{-H^i}f^2dx_{i} / \int e^{-H^i}
 dx_{i}} dx_{i}\nonumber +\int e^{-H^i} f^2\log\frac{\left(\int e^{-\phi(x_i)}
 dx_{i}\right)  e^{-W^i}}{ \int e^{-H^i}
 dx_{i}} dx_{i}\nonumber
 \\= &\nonumber \left( \int e^{-H^i}
 dx_{i} \right) \left(\mathbb{E}^{ i,\omega}(f^2log\frac{f^2}{ \mathbb{E}^{ i,\omega}f^2}) - \mathbb{E}^{i,\omega}( f^2W^i)\right)+\int e^{-H^i} f^2\log\frac{\int e^{-\phi(x_i)}
 dx_{i}   }{ \int e^{-H^i}
 dx_{i}} dx_{i}. \end{align*}
Since  $W^i$ is no negative,  the  last  gives 
 \begin{align}\label{E5.3} D_l \geq \left( \int e^{-H^i}
 dx_{i} \right)\left(\mathbb{E}^{ i,\omega}(f^2log\frac{f^2}{ \mathbb{E}^{ i,\omega}f^2}) - \mathbb{E}^{ i,\omega}( f^2W^i)\right). \end{align}
Combining~\eqref{E5.1} together    with~\eqref{E5.2}  and~\eqref{E5.3} we obtain
\begin{align}\label{E5.4}\mathbb{E}^{ i,\omega}(f^2log\frac{f^2}{\mathbb{E}^{ i,\omega}f^2}) \leq &2c \mathbb{E}^{ i,\omega}\| \nabla_{i} f
\|^2+\mathbb{E}^{ i,\omega}\left(f^2(\frac{ c}{2} \|  \nabla_{i}W^i
\|^2+W^i)\right) . \end{align}
 We now consider the following bound for the entropy, shown in   \cite{B-Z} and \cite{R}   
 \[\mathbb{E}^{ i,\omega}(f^{2}log\frac{f^{2}}{\mathbb{E}^{ i,\omega}f^{2}}) \leq   A\mathbb{E}^{ i,\omega}( f-\mathbb{E}^{ i,\omega}f )^{2} +
 \mathbb{E}^{ i,\omega}\left(( f-\mathbb{E}^{ i,\omega}f )^{2}log\frac{( f-\mathbb{E}^{ i,\omega}f )^{2}}{\mathbb{E}^{ i,\omega}( f-\mathbb{E}^{ i,\omega}f )^{2}}\right)\]
  for some positive constant $A$.  
 Use (\ref{E5.4}) to bound the entropy appearing on the second term on the right hand side,  
 \begin{align*} \nonumber\ \mathbb{E}^{ i,\omega}(f^{2}log\frac{f^{2}}{\mathbb{E}^{ i,\omega}f^{2}}) \leq &  A\mathbb{E}^{ i,\omega}( f-\mathbb{E}^{ i,\omega}f )^{2} +
 2c \mathbb{E}^{ i,\omega}\| \nabla_{i} f
\|^2+\\ & +\mathbb{E}^{ i,\omega}\left(( f-\mathbb{E}^{ i,\omega}f )^{2}(\frac{ c}{2} \|  \nabla_{i}W^i
\|^2+W^i)\right).\end{align*}
If we take expectations with respect to  the Gibbs measure we have 
\begin{align*} \nonumber\ \nu(f^{2}log\frac{f^{2}}{\mathbb{E}^{ i,\omega}f^{2}}) \leq &  A\nu( f-\mathbb{E}^{ i,\omega}f )^{2} +
 2c \nu\| \nabla_{i} f
\|^2+\\ & +J\sum_{j\sim i}\nu\left(( f-\mathbb{E}^{ i,\omega}f )^{2}\{2c \|  \nabla_{i}V(x_i,\omega_j)
\|^2+V(x_i,\omega_j)\}\right)
 \end{align*}where above we use that $J_{i,j}^2\leq J_{i,j}\leq J$.  And so, from the bounds (\ref{H2.8}) and  (\ref{H2.9}) 
 \begin{align*} \nonumber\nu(f^2&log \frac{f^2}{\mathbb{E}^{ i,\omega}f^2}) \leq  (A+4Jk(1+2c))\nu( f -\mathbb{E}^{ i,\omega}f)^2  +2c \nu\| \nabla_{i} f
\|^2+\\  &4(2c+1)kJ\nu(( f-\mathbb{E}^{i,\omega}f)^{2}  \dd^s(x_i) ) +(2c+1)kJ\sum_{ j\sim i} \nu(( f-\mathbb{E}^{i,\omega}f)^{2}  \dd^s(\omega_j) ).\end{align*}
We bound the variance in the first  term by the spectral gap of Lemma \ref{spectralgap1d} 
and the third and the  fourth term   by Corollary \ref{coers2}
 \begin{align*} \nonumber\nu(f^2\log \frac{f^2}{\mathbb{E}^{ i,\omega}f^2}) \leq & \left(  (A+4Jk(1+2c))c_{p}    +2c+(16c+8)kJ  D_3  \right)\nu\| \nabla_{i} f
\|^2+\\  &+(2c+1)kJ D_3\sum_{ j\sim i} \nu \|\nabla_j f\|^2
 \end{align*}
which finishes the proof of  the proposition for $c_1=   (A+4Jk(1+2c))c_{p}    +2c+(16c+8)kJ  D_3$ and $c_2=(2c+1)kJ 2C_{0}(4+c_p)<1$ for $J<((2c_0+1)k2C_{0}(4+c_p))^{-1}$.\qed\end{proof}

\section{Further sweeping-out inequalities}\label{section6}
In this section we prove the second set of sweeping-out inequalities.
\begin{lem}\label{sweep2}
Assume (\ref{H2.1})-(\ref{Hgeo}) and the log-Sobolev inequality for $\mu$. If $i \sim j$ then for some $G_1>0$ and $0<G_2<1$,
\[
\nu\| \nabla_i \sqrt{\E^j f^2}\|^2 
\le G_1 \nu \|\nabla_i f\|^2 + G_2 \nu\|\nabla_j f\|^2
\]
\end{lem}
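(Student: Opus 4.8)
The plan is to differentiate $\sqrt{\E^j f^2}$ directly and to separate the contribution of the derivative falling on $f$ from that of the derivative falling on the density $\rho_j=e^{-H^j}/Z^j$ of $\E^j$. Assume $f>0$, so that the square root is smooth (the general case follows by the usual regularization $\sqrt{\E^j f^2+\delta}$, $\delta\downarrow 0$). Writing $g:=\sqrt{\E^j f^2}$, we have $X^\alpha_i g=\big(X^\alpha_i \E^j f^2\big)/(2g)$, and, exactly as in the footnote to the proof of Lemma \ref{sweep1} (using that the only term of $H^j$ depending on $x_i$ is the interaction $J_{ij}V(x_i,x_j)$, so that $X^\alpha_i H^j=J_{ij}X^\alpha_i V$),
\[
X^\alpha_i \E^j f^2 = 2\,\E^j[f\,X^\alpha_i f]-J_{ij}\,\E^j\big[(f^2-\E^j f^2)\,X^\alpha_i V\big].
\]
Using $(a+b)^2\le 2a^2+2b^2$, squaring, summing over $\alpha$ and dividing by $g^2$ splits $\|\nabla_i g\|^2$ into a diffusive part and an interaction part.

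For the diffusive part, Cauchy--Schwarz in $\E^j$ gives $(\E^j[f X^\alpha_i f])^2\le \E^j f^2\cdot\E^j(X^\alpha_i f)^2=g^2\,\E^j(X^\alpha_i f)^2$, so after dividing by $g^2$, summing and integrating against $\nu$ (using $\nu\E^j=\nu$) this part contributes at most a constant multiple of $\nu\|\nabla_i f\|^2$ and nothing in the $\nabla_j$ direction. The interaction part is the crux. The key is the identity
\[
\E^j\big[(f^2-\E^j f^2)\,X^\alpha_i V\big]=\E^j\big[(f-\E^j f)(f+\E^j f)\,(X^\alpha_i V-\E^j X^\alpha_i V)\big],
\]
which holds because $\E^j(f^2-\E^j f^2)=0$ allows one to center $X^\alpha_i V$, and the discrepancy $g^2-(\E^j f)^2$ is absorbed by that centering. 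I would then apply Cauchy--Schwarz grouping $(f-\E^j f)(X^\alpha_i V-\E^j X^\alpha_i V)$ against the single factor $f+\E^j f$, and use $\E^j[(f+\E^j f)^2]=\E^j f^2+3(\E^j f)^2\le 4g^2$ to cancel the $1/g^2$. Summing over $\alpha$, this bounds the interaction part of $\|\nabla_i g\|^2$ by a constant times $J_{ij}^2\,\E^j\big[(f-\E^j f)^2\,(\|\nabla_i V\|^2+\E^j\|\nabla_i V\|^2)\big]$. Crucially the factor $(f-\E^j f)^2$ is now \emph{centered}; this is exactly what prevents an uncontrollable bare $\nu f^2$ term from surviving.

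It remains to integrate over $\nu$ and dispose of the interaction part using the single-site estimates. By (\ref{H2.8}), $\|\nabla_i V\|^2\le k+k\dd^s(x_i)+k\dd^s(x_j)$ with $s\le p$. The constant part gives $k\,\nu(f-\E^j f)^2$, controlled by the Poincar\'e inequality of Lemma \ref{spectralgap1d}; the $\dd^s(x_i)$ and $\dd^s(x_j)$ parts give centered, weighted variances that are converted into $\nu\|\nabla_i f\|^2$ and $\nu\|\nabla_j f\|^2$ by the two assertions of Corollary \ref{coers2} (applied with the roles of $i$ and $j$ interchanged). The only term needing a separate remark is the one carrying $\E^j\|\nabla_i V\|^2$: there the $\dd^s(x_j)$ contribution appears as $\E^j\dd^s(x_j)$, which is bounded \emph{uniformly} by $C_0$ by applying the coercive inequality of Lemma \ref{coers} to the constant function $h\equiv 1$ (by approximation), after which it too reduces to $\nu(f-\E^j f)^2$ and hence to $\nu\|\nabla_j f\|^2$. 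Collecting terms, every interaction contribution carries a factor $J_{ij}^2\le J^2$, so the coefficient of $\nu\|\nabla_j f\|^2$ is $G_2=O(J^2)$, made $<1$ by taking $J$ small, while the coefficient of $\nu\|\nabla_i f\|^2$ is $G_1=2+O(J^2)>0$. The main obstacle throughout is organizing the interaction term so that simultaneously (i) the $1/g^2$ cancels and (ii) only the centered quantity $(f-\E^j f)^2$, never a bare $f^2$, multiplies the potential weights; the identity above together with $\E^j[(f+\E^j f)^2]\le 4g^2$ is precisely what achieves both.
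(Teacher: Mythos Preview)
Your proposal is correct and follows essentially the same route as the paper's proof: split $X_i^\alpha\E^j f^2$ into the diffusive piece $2\E^j(fX_i^\alpha f)$ and the interaction piece $-J_{ij}\cov_{\E^j}(f^2,X_i^\alpha V)$, cancel the $1/g^2$ in each by Cauchy--Schwarz in a way that leaves only $(f-\E^j f)^2$ next to the potential weights, then finish with \eqref{H2.8}, Lemma~\ref{coers} (for $\E^j\dd^s(x_j)\le C_0$), Lemma~\ref{spectralgap1d}, and Corollary~\ref{coers2}. The one cosmetic difference is that the paper packages your covariance manipulation as a quoted lemma from \cite{Pa1}, namely $(\cov_\mu(f^2,g))^2\le 8(\mu f^2)\,\mu[(f-\mu f)^2(g^2+\mu g^2)]$, whereas you prove it inline via the factorisation $f^2-\E^j f^2=(f-\E^j f)(f+\E^j f)+[(\E^j f)^2-\E^j f^2]$ together with $\E^j(f+\E^j f)^2\le 4g^2$; this is in fact the standard proof of that lemma, so the arguments coincide.
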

\begin{proof}
Fix neighboring sites $i, j$.
Start with the left-hand side, 
$$\nu \| \nabla_i \sqrt{\E^j f^2}\|^2 = \sum_{\alpha=1}^n
(X_i^\alpha \sqrt{\E^j f^2})^2,$$ where 
\begin{equation}
\label{E6.1}
(X_i^\alpha \sqrt{\E^j f^2})^2 
= \frac{(X_i^\alpha \E^j f^2)^2}{4\E^j f^2},
\end{equation} 
estimate the numerator as in \eqref{E4.1}: 
\[
(X_i^\alpha \E^j f^2)^2
\le 2 (\E^j(X_i f^2))^2 + 2(\int(X_i^\alpha \rho_j) f^2 dx_j)^2.
\]
Use Leibnitz' rule, Cauchy-Schwarz and Jensen 
for the first summand and estimate
the second using \eqref{E4.2} and \eqref{E4.4}:
\[
(X_i^\alpha \E^j f^2)^2  \,
\le 4 (\E^j f^2) \E^j(X_i^\alpha f)^2
+ 2 J_{ji}^2 \cov_{\E^j}[f^2,\, X_i^\alpha V(x_j, x_i)] ^2,
\]
where $\cov_\mu(f, g) = \mu(fg)-\mu(f)\mu(g) = \mu(f(g-\mu g))$, 
for a probability measure $\mu$.
Substituting into \eqref{E6.1} and summing over $\alpha$, we get
\[
\|\nabla_i \sqrt{\E^j f^2}\|^2  \le \E^j\|\nabla_i f\|^2 + \frac{J^2}{2} 
\sum_\alpha \frac{\cov_{\E^j}[f^2,\, X_i^\alpha V(x_j, x_i)] ^2}{\E^j f^2}.
\]
Instead of using Jensen,  as we did in \eqref{E4.3}, we use
the following inequality (see Lemma 4.1 in \cite{Pa1}):
\begin{lem} For a probability measure $\mu$
\[
(\cov_\mu(f^2, g))^2 \le 8\, (\mu f^2) \mu [(f-\mu f)^2 (g^2 + \mu g^2)].
\] 
\end{lem}
We get
\[
\|\nabla_i \sqrt{\E^j f^2}\|^2  \le \E^j\|\nabla_i f\|^2 + 4J^2  
\E^j\big \{(f-\E^j f)^2 (\|\nabla_i V(x_j,x_i)\|^2 + \E^j \|\nabla_i V(x_j,x_i)\|^2) \big\}.
\]
If we now use  condition  (\ref{H2.8})  to bound the interactions, and then take   expectations with  respect to $\nu$  we obtain 
\begin{align}\nonumber
\nu \|\nabla_i \sqrt{\E^j f^2}\|^2  \le &\nu \|\nabla_i f\|^2 +8k J^2  
\nu[\E^j (f-\E^j f)^2 ]+ 4 k J^2  
\nu[\E^j[(f-\E^j f)^2 \E^j \dd(x_j)^s]]+ \\  & +8k J^2  
\nu [(f-\E^j f)^2\dd(x_i)^s] +  4 k J^2  
\nu[ (f-\E^j f)^2   \dd(x_j)^s ].\label{E6.2}
\end{align}
At first notice that from Lemma \ref{coers} we can bound $\E^j[\dd(x_j)^r] \le C_0$. So the sum of the second and third term can be bounded from the variance with respect to the one site measure $\E^i$. Then the variance can be bounded by the spectral gap inequality obtained in Lemma \ref{spectralgap1d}. 
\begin{align*}8\lambda J^2  
\nu[\E^j (f-\E^j f)^2 ]+ 4 k J^2  
\nu[\E^j[(f-\E^j f)^2 \E^j \dd(x_j)^s]] \leq & 4kJ^2  (2+C_0)\nu [\E^j (f-\E^j f)^2]\\ \leq & 4k J^2  (2+C_0) c_p\nu \|\nabla_j f\|^2.\end{align*}
For the  remaining two last  terms in the right hand side of (\ref{E6.2}), we can use  the two bounds presented in  Corollary \ref{coers2}. 
If we put all these bounds together we get
\begin{align*}\nonumber
\nu \|\nabla_i \sqrt{\E^j f^2}\|^2  \le &(1+8k J^2  D_3)\nu \|\nabla_i f\|^2 +4k J^2(3
 D_3+  (2+C_0) c_p)\nu \|\nabla_j f\|^2.
\end{align*}
This proves the lemma with constants
$G_1 = 1+8k J^2  D_3$
and $G_2 = 4k J^2(3
 D_3+  (2+C_0) c_p)<1$, provided that 
 $J < (4k(3
 D_3+  (2+C_0) c_p))^{-\frac{1}{2}} $.\qed
\end{proof}

\begin{lem}\label{sweepGam2}
Assume (\ref{H2.1})-(\ref{Hgeo}) and the log-Sobolev  inequality for $\E^{i,\omega}$. There are constants $C_1>0$ and $0<C_2 < 1$ such that
 \begin{align*}
\nu\|\nabla_{\Gamma_0} \sqrt{\E^{\Gamma_1} h^2}\|^2
&\le C_1\, \nu\|\nabla_{\Gamma_0} h\|^2 + C_2\, \nu\|\nabla_{\Gamma_1} h\|^2
\end{align*}
and
\begin{align*}
\nu\|\nabla_{\Gamma_1} \sqrt{\E^{\Gamma_0} h^2}\|^2
&\le C_1\, \nu\|\nabla_{\Gamma_1} h\|^2 + C_2\, \nu\|\nabla_{\Gamma_0} h\|^2.
\end{align*}
\end{lem}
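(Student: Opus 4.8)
The plan is to mirror the proof of Lemma \ref{sweepGam1}, replacing the single-site linear sweep of Lemma \ref{sweep1} by its square-root counterpart, Lemma \ref{sweep2}. I treat only the first inequality; the second follows by interchanging the roles of $\Gamma_0$ and $\Gamma_1$.

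First I would exploit that the sites of $\Gamma_1$ are pairwise non-adjacent, so that $\E^{\Gamma_1}$ is a product of the single-site specifications $\E^k$, $k \in \Gamma_1$, which commute. Fixing $i \in \Gamma_0$, whose neighbours $\partial\{i\}=\{j_1,j_2,j_3,j_4\}$ lie in $\Gamma_1$, I would factor $\E^{\Gamma_1} h^2 = \E^{\Gamma_1\setminus\partial\{i\}} g^2$ with $g^2 := \E^{\partial\{i\}} h^2$. Since no site of $\Gamma_1\setminus\partial\{i\}$ is a neighbour of $i$, the density of $\E^{\Gamma_1\setminus\partial\{i\}}$ does not involve $x_i$, so $X_i^\alpha$ commutes with that integration; combining this with the Cauchy--Schwarz bound $(\E^{\Gamma_1\setminus\partial\{i\}} 2g\,X_i^\alpha g)^2 \le 4(\E^{\Gamma_1\setminus\partial\{i\}}g^2)(\E^{\Gamma_1\setminus\partial\{i\}}(X_i^\alpha g)^2)$, I get the ``free sweep''
\[
\|\nabla_i \sqrt{\E^{\Gamma_1} h^2}\|^2 \le \E^{\Gamma_1\setminus\partial\{i\}}\|\nabla_i g\|^2 .
\]
Integrating against $\nu$ and using $\nu\E^{\Gamma_1\setminus\partial\{i\}}=\nu$ then reduces matters to bounding $\nu\|\nabla_i\sqrt{\E^{\partial\{i\}} h^2}\|^2$.

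Next I would peel the four neighbours off one at a time. Writing $\E^{\partial\{i\}}=\E^{j_1}\E^{j_2}\E^{j_3}\E^{j_4}$ and setting $g_m := \sqrt{\E^{j_{m+1}}\cdots\E^{j_4} h^2}$, each step $g_{m-1}=\sqrt{\E^{j_m} g_m^2}$ is handled by Lemma \ref{sweep2} applied to the adjacent pair $i\sim j_m$, producing $G_1\nu\|\nabla_i g_m\|^2 + G_2\nu\|\nabla_{j_m} g_m\|^2$. The cross terms $\nu\|\nabla_{j_m} g_m\|^2$ are controlled by repeated free sweeps, since $j_m$ is not adjacent to $j_{m+1},\dots,j_4$, giving $\nu\|\nabla_{j_m} g_m\|^2 \le \nu\|\nabla_{j_m} h\|^2$. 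Iterating and using $G_1>1$ to dominate the mixed powers, exactly as in Lemma \ref{sweepGam1}, yields
\[
\nu\|\nabla_i\sqrt{\E^{\partial\{i\}} h^2}\|^2 \le G_1^4\, \nu\|\nabla_i h\|^2 + G_1^3 G_2 \sum_{j\in\partial\{i\}} \nu\|\nabla_j h\|^2 .
\]

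Finally I would sum over $i\in\Gamma_0$. The first term assembles into $G_1^4\,\nu\|\nabla_{\Gamma_0} h\|^2$, while in the double sum each $j\in\Gamma_1$ occurs as a neighbour of at most $2d=4$ sites of $\Gamma_0$, so the neighbour contribution is at most $4 G_1^3 G_2\,\nu\|\nabla_{\Gamma_1} h\|^2$. This gives the claim with $C_1 := G_1^4$ and $C_2 := 4 G_1^3 G_2$. The only delicate point is the size of $C_2$: since $G_1=1+8kJ^2 D_3 \to 1$ and $G_2 = 4kJ^2(3D_3+(2+C_0)c_p)\to 0$ as $J\to 0$, the condition $C_2<1$ holds for all sufficiently small $J$. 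This smallness requirement is the main (and essentially only) genuine obstacle; the remainder is bookkeeping of the iteration and the neighbour counting, the analytic content having already been absorbed into Lemma \ref{sweep2}.
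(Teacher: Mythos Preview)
Your proposal is correct and follows essentially the same approach as the paper: both isolate the four neighbours of $i$ via the ``free sweep'' (the paper's inequality (QS)), then peel them off one at a time with Lemma~\ref{sweep2}, using further free sweeps for the cross terms. Your bookkeeping mirrors that of Lemma~\ref{sweepGam1} and yields $C_2=4G_1^3G_2$, whereas the paper's slightly looser accounting gives $C_2=4G_2(1+G_1+G_1^2+G_1^3)$; both tend to $0$ as $J\to 0$, so the conclusion is the same.
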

\begin{proof}
We will make frequent use of the following inequality. Let $A, B$
be subsets of $\Z^2$ at lattice distance at least $2$ 
and $i \in \Z^2$ such that 
$\partial\{i\} \cap A = \varnothing$.
Then
\[
\nu\|\nabla_i \sqrt{\E^{A\cup B} f^2}\|^2
\le \nu\|\nabla_i \sqrt{\E^{B} f^2}\|^2.
\]
To see this, let $\nabla_i = (X^\alpha_i, \alpha=1,\ldots,n)$
and write
\[
X^\alpha_i \sqrt{\E^{A\cup B} f} 
= \frac{X^\alpha_i \E^{A\cup B} f}{2\sqrt{\E^{A\cup B}f}}
= \frac{\E^A X^\alpha_i \E^B f}{2\sqrt{\E^{A\cup B}f}}
= \frac{2 \E^A [ \sqrt{\E^B f} X^\alpha_i \sqrt{\E^B f}]}{2\sqrt{\E^{A\cup B}f}},
\]
where the first and last inequalities are due to Leibnitz' rule,
while the middle one follows from the assumptions on $A$, $B$ and $i$.
By Cauchy-Schwarz, $(\E^A [ \sqrt{\E^B f} X^\alpha_i \sqrt{\E^B f}])^2
\le (\E^A \E^B f) \, \E^A(X^\alpha_i \sqrt{\E^B f})^2$. 
Squaring the last display and  replacing by this inequality we obtain
$(X^\alpha_i \sqrt{\E^{A\cup B} f})^2 \le \E^A(X^\alpha_i \sqrt{\E^B f})^2$.
Summing over $\alpha$ and integrating over $\nu$ proves the claim.

To save some space below, for $F : \Spin^{\Z^d} \to \R^n$
we shall write $\III{F}^2$ instead of $\int \|F(x)\|^2 d\nu(x)$.
We shall also write $\widehat E f$ instead of $\sqrt{\E f}$. Thus the inequality
we showed is written as 
\[
\III{\nabla_i \widehat\E^{A\cup B} f^2} ^2
\le \III{\nabla_i \widehat\E^{B} f^2}^2
\tag{QS}.
\]
Using this we upper bound $\nu\|\nabla_{\Gamma_1} \sqrt{\E^{\Gamma_0}f^2}\|^2$:
\begin{equation}
\label{E6.3}
\III{\nabla_{\Gamma_1} \widehat\E^{\Gamma_0} f^2}^2
= \sum_{i\in\Gamma_1} \III{\nabla_i \widehat\E^{\Gamma_0}f^2}^2
\le \sum_{i\in\Gamma_1} 
\underbrace{\III{\nabla_i \widehat\E^{\partial\{i\}}f^2}^2}_{:=T_1(i)}.
\end{equation}
Fix $i \in \Gamma_1$ and denote its neighbors by $i_1, i_2,i_3,i_4$. Let also $I := \{i_2,i_3,i_4\} = \partial\{i\}\setminus\{i_1\}$.
Using Lemma \ref{sweep2} we write
\begin{equation*}
T_1(i)
=\III{\nabla_i  \widehat\E^{i_1}\E^I f^2}^2
\le G_1 \underbrace{\III{\nabla_i \widehat\E^I f^2}^2}_{:=T_2(i)}
+ G_2 \III{\nabla_{i_1} \widehat\E^I f^2}^2.
\end{equation*}
Using (QS) three times in the second term, we obtain 
\[
\III{\nabla_{i_1} \widehat\E^I f^2}^2 \le \III{\nabla_{i_1} f}^2.
\]
And so,
\begin{align}  T_1(i)\leq G_1 T_2(i)+   G_2  \sum_{\ell \sim i}\III{\nabla_{\ell} f}^2  \label{E6.4}
.\end{align}  
   Now we sum  over $i \in \Gamma_1$. Note that 
$\sum_{i \in \Gamma_1} \sum_{\ell \sim i}\III{\nabla_{\ell} f}^2
=4 \sum_{j\in \Gamma_0} \III{\nabla_j f}^2$.
\[
 \sum_{i \in \Gamma_1} T_1(i) 
\le G_1 \sum_{i \in \Gamma_1} T_2(i)
+   4G_2\III{\nabla_{ \Gamma_0} f}^2.
\]
We proceed in the same manner to estimate $T_2(i)$.
Let $J=\{i_3,i_4\}$,
\begin{equation}
\label{E6.5}
 T_2(i) 
:= \III{\nabla_i \widehat\E^{i_2} \E^J f^2}^2
\le 
G_1 \underbrace{\III{\nabla_i \widehat\E^J f^2}^2}_{:=T_3(i)}
+ G_2 \III{\nabla_{i_2} \widehat\E^J f^2}^2.
\end{equation}
Use (QS) for the second term,
\[
\III{\nabla_{i_2} \widehat\E^J f^2}^2 
\le \III{\nabla_{i_2}f}^2.
\]
Substituting  into \eqref{E6.5} 
\begin{equation}
 T_2(i) 
\le 
G_1 T_3(i)+  G_2  \sum_{\ell \sim i}\III{\nabla_{\ell} f}^2\label{E6.6}
\end{equation}
and summing up over $i \in \Gamma_1$,
\[
 \sum_{i\in \Gamma_1} T_2(i)
\le G_1 \sum_{i\in \Gamma_1} T_3(i) 
+  4  G_2  \III{\nabla_{ \Gamma_0} f}^2 .
\]
The next term is similar:
\[
T_3(i) 
= \III{\nabla_i \widehat\E^{i_3} \E^{i_4} f^2}^2
\le G_1 \III{\nabla_i \widehat\E^{i_4}f^2}^2
+ G_2 \III{\nabla_{i_3} \widehat\E^{i_4}f^2}^2,
\]
with the  terms  estimated as
\begin{align*}
\III{\nabla_i \widehat\E^{i_4}f^2}^2 &\le
G_1 \III{\nabla_{i} f}^2
+ G_2 \III{\nabla_{i_4} f}^2
 \\
\III{\nabla_{i_3} \widehat\E^{i_4}f^2}^2 &\le \III{\nabla_{i_3} f}^2,
\end{align*}
so that
\begin{align}
T_3(i) 
\le  G^{2}_1 \III{\nabla_{i} f}^2
+ (1+G_{1} )G_2  \sum_{\ell \sim i}\III{\nabla_{\ell} f}^2 \label{E6.7}
\end{align}
and summing over $i\in\Gamma_1$ \[
 \sum_{i\in \Gamma_1} T_3(i)
\le G^{2}_1 \III{\nabla_{ \Gamma_1} f}^2
+ 4(1+G_{1} )G_2  \III{\nabla_{ \Gamma_0} f}^2 .
\]

Substituting the terms involving the sums to one another and then
back to \eqref{E6.3} yields the second inequality in
the statement with 
$C_1 =G_1^4 $ and  
$C_2 = 4G_2 (1+4G_1+G^{2}_{1}++G^{3}_{1}).$ 
Since $G_2 = 4k J^2(3 D_3+  (2+C_0) c_p)<1$ we can choose $J$ sufficiently small such that $G_2$ is small enough so that $C_2<1$.
\end{proof}

In the next proposition we prove a weak log-Sobolev inequality for the product measures $\E^{\Gamma_i}, i=0,1$, where the entropy of the measure $\E^{\Gamma_i}$ for $ i=0$ or $1$ is not bounded by the  relevant gradient, but from the gradients of both $\Gamma_0$ and  $\Gamma_1$. 
 
  \begin{prop}\label{LSGamma0}Assume (\ref{H2.1})-(\ref{Hgeo}) and the log-Sobolev inequality for $\mu$. Then the following log-Sobolev type inequality for the measure  $\mathbb{E}^{i,\omega}$ holds  
 \begin{align*}  \nu\mathbb{E}^{ \Gamma_{k}}(f^2\log\frac{f^2}{\mathbb{E}^{ \Gamma_{k}} f^2}) \leq\tilde C \nu\left\vert \nabla_{\Gamma_0}
f
\right\vert^2+\tilde C\nu\left\vert \nabla_{\Gamma_1} f
\right\vert^2\end{align*}for $k=0,1$, and  some positive constant $\tilde C$.\end{prop}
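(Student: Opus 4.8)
The plan is to exploit the product structure of $\E^{\Gamma_k}$ together with the subadditivity (tensorization) of entropy, thereby reducing the entropy of the product measure to a sum of single-site entropies, to each of which the weak inequality of Proposition \ref{prop5.1} can be applied.

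First I would record the key structural fact, already noted in the general framework: since any two sites of $\Gamma_k$ are at lattice distance strictly greater than one, the Hamiltonian contains no direct interaction between distinct sites of $\Gamma_k$, and hence the conditional measure $\E^{\Gamma_k}$ factorizes as the product $\prod_{i \in \Gamma_k} \E^{i}$. Consequently the standard subadditivity property of entropy for product measures applies with $g = f^2$, giving
\[
\E^{\Gamma_k}\Big(f^2 \log \frac{f^2}{\E^{\Gamma_k} f^2}\Big)
\le \sum_{i \in \Gamma_k} \E^{\Gamma_k}\Big[\E^{i}\Big(f^2 \log \frac{f^2}{\E^{i} f^2}\Big)\Big].
\]
Integrating this over the Gibbs measure $\nu$ and using the DLR invariance $\nu \E^{\Gamma_k} = \nu$, the right-hand side collapses to $\sum_{i \in \Gamma_k} \nu\big[\E^{i}(f^2 \log (f^2/\E^{i} f^2))\big]$.

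Next I would invoke the weak single-site log-Sobolev inequality of Proposition \ref{prop5.1} for each summand, producing
\[
\nu\E^{\Gamma_k}\Big(f^2 \log \frac{f^2}{\E^{\Gamma_k} f^2}\Big)
\le \sum_{i \in \Gamma_k}\Big( c_1\, \nu\|\nabla_i f\|^2 + c_2 \sum_{j \sim i} \nu\|\nabla_j f\|^2\Big).
\]
It then only remains to reorganize the two sums according to the checkerboard structure of the partition. Working in the case $d=2$, every neighbor $j$ of a site $i \in \Gamma_k$ lies in $\Gamma_{1-k}$, so the first sum is exactly $c_1\, \nu\|\nabla_{\Gamma_k} f\|^2$; and since each $j \in \Gamma_{1-k}$ is a neighbor of precisely $2d = 4$ sites of $\Gamma_k$, the double sum equals $4 c_2\, \nu\|\nabla_{\Gamma_{1-k}} f\|^2$. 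Setting $\tilde C := \max\{c_1, 4 c_2\}$ then yields the asserted bound uniformly for $k=0$ and $k=1$.

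The argument is essentially bookkeeping once subadditivity is in place; the only points requiring genuine care are the verification that the factorization $\E^{\Gamma_k} = \prod_{i\in\Gamma_k}\E^i$ is legitimate (which rests precisely on the distance-$\geq 2$ property built into the definition of $\Gamma_k$, so that subadditivity for a \emph{product} measure is applicable rather than a general conditional decomposition), and the combinatorial multiplicity $2d$ in the final neighbor count. I do not expect the constant $c_2<1$ from Proposition \ref{prop5.1} to play a decisive role here, since the multiplicative factor $2d$ is absorbed harmlessly into $\tilde C$; the smallness of $c_2$ will instead matter later when these weak inequalities are iterated.
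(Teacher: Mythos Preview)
Your argument is correct and is in fact considerably simpler than the route taken in the paper. The paper does not invoke the tensorization (subadditivity) of entropy; instead it writes the entropy of $\E^{\Gamma_1}$ as a telescoping \emph{identity}
\[
\nu\,\ent_{\E^{\Gamma_1}}(f^2)=\sum_{k\ge 1}\nu\,\ent_{\E^{a_k}}\big(\E^{a_{k-1}}\cdots\E^{a_1}f^2\big),
\]
using a spiral enumeration $a_1,a_2,\dots$ of $\Gamma_1$, and then applies Proposition~\ref{prop5.1} with $(\E^{a_{k-1}}\cdots\E^{a_1}f^2)^{1/2}$ in place of $f$. This forces them to control the cross terms $\nu\|\nabla_j(\E^{a_{k-1}}\cdots\E^{a_1}f^2)^{1/2}\|^2$ for $j\sim a_k$, which they do via the sweeping-out estimates of Lemma~\ref{sweep2} packaged into auxiliary quantities $\Phi(i)$ and $\Theta(i)$. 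Your use of subadditivity short-circuits all of this: because $\E^{\Gamma_k}=\prod_{i\in\Gamma_k}\E^i$ genuinely factorizes, you land directly on $\sum_{i\in\Gamma_k}\nu\,\ent_{\E^i}(f^2)$ with the \emph{original} $f$, and Proposition~\ref{prop5.1} applies without any further gradient gymnastics. The paper's approach is not wrong, but it imports machinery (Lemma~\ref{sweep2}) that is not needed at this stage; your proof makes Proposition~\ref{LSGamma0} logically independent of Section~\ref{section6}. The only point worth flagging is that the tensorization inequality is usually stated for finite products, so for the infinite set $\Gamma_k$ one should either note that $f$ is cylindrical (depends on finitely many coordinates) or pass to the limit over finite subsets; this is routine.
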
    

\begin{proof}     Consider a node  $i\in \mathbb{Z}^2$ with  four neighbours denoted as  $\{ \sim i \}=i_1,i_2,i_3,i_4$ . We start by considering the following two quantities:  
\begin{align*}\Phi(i):=&\nu\|\nabla_{i}( \mathbb{E}^{\{i_{1} ,i_{2},i_3,i_4\}}f^{2})^{\frac{1}{2}}\|^{2} +\nu \|\nabla_{i}( \mathbb{E}^{\{ i_{2},i_3,i_4\}}f^{2})^{\frac{1}{2}}\|^{2}+\nu\|\nabla_{i}( \mathbb{E}^{\{ i_3,i_4\}}f^{2})^{\frac{1}{2}}\|^{2}+\\ &+\nu\|\nabla_{i}( \mathbb{E}^{\{ i_4\}}f^{2})^{\frac{1}{2}}\|^{2}\end{align*}
and
\begin{align*}\Theta(i):=    \nu\|\nabla_{i}  f\|^2+ \sum_{s \sim i}\   \nu\|\nabla_{s}  f\|^2 .\end{align*} From the estimates (\ref{E6.4}), (\ref{E6.6}) and (\ref{E6.7})  about the components of  the sum of $\Phi(i)$ in the proof of   Lemma  \ref{sweepGam2} together with   Lemma \ref{sweep2} we   surmise that there exists a constant  $R_3>0$   such that   
\begin{align}\label{E6.8} \Phi(i) \leq  R_{3}\Theta(i).
\end{align}  Starting from the neighbourhood of  $(0,0)$ we form a spiral  enumeration of all  nodes in $\Gamma_1$ as described below (see also depiction in figure \ref{fig2}).
\begin{figure}[h]
           \begin{center}
\epsfig{file=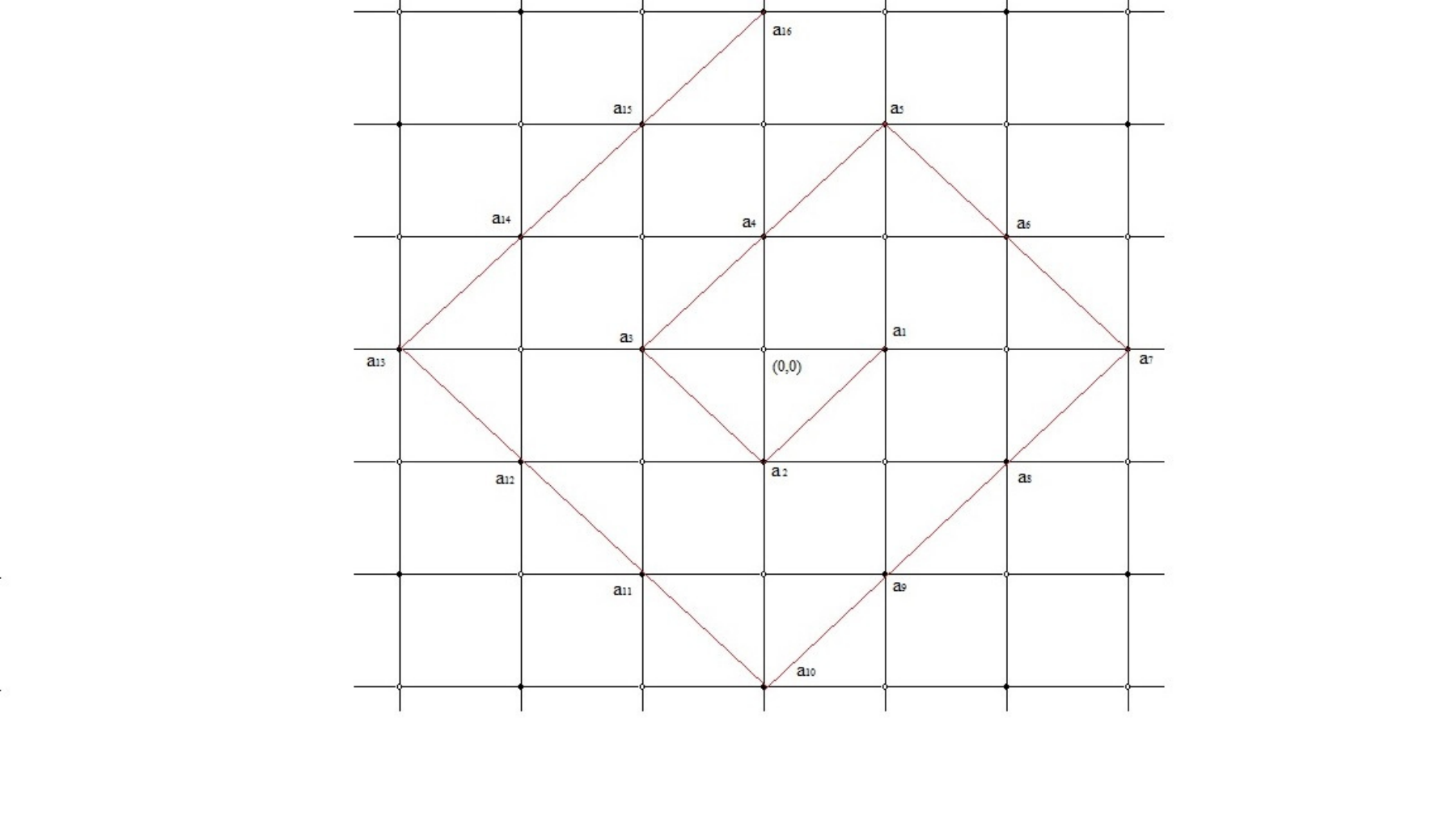, height=4.5cm}
\caption{$\circ = \Gamma_0$, $\bullet = \Gamma_1$}
\label{fig2}
           \end{center}
\end{figure}
 We start by denoting   $a_1,a_2,a_3,a_4$ the neighbours of $(0,0)$. Obviously, since $(0,0) \in \Gamma_0$, the nodes $a_i\in \Gamma_1$ for $i=1,...,4$. After choosing  $a_1$ from any of the four neighbours,  the rest are named clockwise. Then, we choose  $a_5$ to be any of the nodes in $\Gamma_1$ of distance two from $a_4$ and distance three from $(0,0)$.  We continue in the same manner   clockwise the enumeration of the rest of the nodes in $\Gamma_1$ that have distance  three from $(0,0)$, then distance four, and so on. In this way we   construct a spiral    comprising of the  nodes in $\Gamma_1$ always  moving clockwise while we move away  from $(0,0)$.    We can then write $\mathbb{E}^{ \Gamma_{1}}=\sqcap_{i=1}^{+\infty}\ \mathbb{E}^{ a_{i}}$.
Since we have obtain in Proposition \ref{prop5.1} a log-Sobolev inequality for the one node measure, for constants $c_1$ and $c_2$ uniformly on the node $i$ (recall that $c_2=(2c+1)kJ 2C_{0}(4+c_p)<1$ for $J<((2c_0+1)k2C_{0}(4+c_p))^{-1}$), we will express the entropy of the product measure $\E^{\Gamma_1}$ in terms of the individual entropies as seen  below
\begin{align}\nu\mathbb{E}^{ \Gamma_1}(f^2\log\frac{f^2}{\mathbb{E}^{ \Gamma_1}f^2}) = \label{E6.9}\sum_{k=1}^{+\infty}\nu\mathbb{E}^{a_k}(\mathbb{E}^{ a_{k-1}}...\mathbb{E}^{a_1}f^2\log\frac{\mathbb{E}^{a_{k-1}}...\mathbb{E}^{a_1}f^2}{\mathbb{E}^{a_k}...\mathbb{E}^{a_1}f^2})\end{align}
so that we can  upper bound the one site entropies from the log-Sobolev inequalities,   
 \begin{align}\label{H6.10} \nu\mathbb{E}^{  a_{k}}(\mathbb{E}^{ a_{k-1}}...\mathbb{E}^{ a_{1}}f^2\log\frac{\mathbb{E}^{a_{k-1}}...\mathbb{E}^{a_{1}}f^2}{\mathbb{E}^{a_{k}}...\mathbb{E}^{a_{1}}f^2}) \leq c_{1}\nu\|\nabla_{a_{k}}\ f\|^2 +c_{2}\sum_{ j\sim a_{k}}  \nu \|\nabla_j (\mathbb{E}^{ a_{k-1}}...\mathbb{E}^{ a_{1}}f^2)^\frac{1}{2}\|^2 \end{align}
 where above in the computation of the first term  we used that $a_i$'s have distance bigger than one from each other, and so $\nu\|\nabla_{a_k} (\mathbb{E}^{ a_{k-1}}...\mathbb{E}^{ a_{1}}f^2)^\frac{1}{2}\|^2 \leq \nu\|\nabla_{a_k} f\|^2$.  
 For the second summand in (\ref{H6.10}) notice that the neighbours of  $a_k$ can be distinguished into two categories. Those that have distance bigger than one from $a_{k-1},a_{k-2},...,a_1$ and those that neighbour with at least one of    $a_{k-1},a_{k-2},...,a_1$.  For $j\sim a_k$ that belong to the first category, since they do not neighbour any of the nodes  $a_{k-1},a_{k-2},...,a_1$ we clearly get
 \begin{align}\label{H6.11}\nu\|\nabla_{j} (\mathbb{E}^{ a_{k-1}}...\mathbb{E}^{ a_{1}}f^2)^\frac{1}{2}\|^2 \leq \nu\|\nabla_{j} f\|^2. \end{align}
 For those   neighbours of $a_k$, that neighbour  with at least one of the $a_{k-1},a_{k-2},...,a_1$ we can write 
\begin{align*}\nu\|\nabla_{j} (\mathbb{E}^{ a_{k-1}}...\mathbb{E}^{ a_{1}}f^2)^\frac{1}{2}\|^2 \leq\Phi(j).
\end{align*}
If we bound this by (\ref{E6.8}) 
\begin{align}\label{H6.12}\nu\|\nabla_{j} (\mathbb{E}^{ a_{k-1}}...\mathbb{E}^{ a_{1}}f^2)^\frac{1}{2}\|^2 \leq R_{3}\Theta(j).
\end{align}   
Gathering together    (\ref{H6.12}), (\ref{H6.11}) and  (\ref{H6.10}) we have
   \begin{align*} \nonumber\nu\mathbb{E}^{  a_{k}}(\mathbb{E}^{ a_{k-1}}...\mathbb{E}^{ a_{1}}f^2\log\frac{\mathbb{E}^{a_{k-1}}...\mathbb{E}^{a_{1}}f^2 }{\mathbb{E}^{a_{k}}...\mathbb{E}^{a_{1}}f^2}) \leq  & c_{1}\nu\| \nabla_{a_{k}} f\|^2+c_{2}\sum_{ j\sim a_{k}}  \nu\|\nabla_{j} f\|^2+\\ &  +c_{2}R_{3}\sum_{ j\sim a_{k}}  \Theta(j)  .\end{align*}
 Then, if we combine this bound  together with (\ref{E6.9}) we obtain 
  \begin{align*}\nu\mathbb{E}^{ \Gamma_1}(f^2\log\frac{f^2}{\mathbb{E}^{ \Gamma_1}f^2}) \leq  & c_{1}\sum_{k=1}^{+\infty}\nu\| \nabla_{a_{k}} f\|^2+c_{2}\sum_{k=1}^{+\infty}\sum_{ j\sim a_{k}}  \nu\|\nabla_{j} f\|^2+\\ &  +c_{2}R_{3}\sum_{k=1}^{+\infty}\sum_{ j\sim a_{k}}    \sum_{n=0}^{2}  \sum_{r:dist(s,j)=n}\   \nu\|\nabla_{s}  f\|^2.\end{align*}
If we notice that for every node there are four nodes at distance one and eight at distance two, after rearranging the sums above we finally obtain  
 \begin{align*}\nu\mathbb{E}^{ \Gamma_1}(f^2\log\frac{f^2}{\mathbb{E}^{ \Gamma_1}f^2}) \leq( c_{1}+13R_3c_{2})\nu\| \nabla_{\Gamma_1} f\|^2+( 4c_{2}+13R_3c_{2})\nu\| \nabla_{\Gamma_0} f\|^2.\end{align*}
  \end{proof}

\section{The log-Sobolev inequality for the Gibbs measure}\label{PROOF}

In this section we prove the main result stated in Theorem \ref{thmGENERAL}.
 We recall that  $\Q^n$ is defined as $\Q^0 f=f$ and $\Q^n := \E^{\Gamma_0} \Q^{n-1}$ when $n$ is odd and  $\Q^n := \E^{\Gamma_1} \Q^{n-1}$ when $n$ is even.\begin{proof}
If $\Lambda$ is a subset of $\Z^d$, we write
$\ent_{\E^\Lambda}$ for the entropy of the probability measure
$\P^{\Lambda,\omega}$ on $\Spin^\Lambda$, that is, 
$\ent_{\E^\Lambda}(g) =  \E^\Lambda \big[ g \log \frac{g}{\E^\Lambda g} \big]$.
From this, with $\lambda(x) := x \log x$, we have
\begin{equation}
\label{H7.1}
\E^\Lambda[\lambda(g)] = \ent_{\E^\Lambda}(g) + \lambda(\E^\Lambda g),
\end{equation}
where we used the fact that $\E^\Lambda g$  does
not depend on $x_{\Lambda}$. 

We claim that, for all $n \ge 1$,
\begin{align}\nonumber
\label{H7.2}
\Q^n[\lambda(g)] 
= &\sum_{m=0,\ m\ \text{odd} }^{n-1} \Q^{n-m-1} \E^{\Gamma_1}[\ent_{\E^{\Gamma_0}}(\Q^m g)]
+ \sum_{m=0,\  m\ \text{even}}^{n-1} \Q^{n-m-1} [\ent_{\E^{\Gamma_1}}( \Q^m g)]
+ \\   &+ \lambda(\Q^n g).
\end{align}
To see this, notice first that  the statement is trivial for $n=1$. Assuming it
true for some $n \ge 1$, we prove the same thing with $n+1$ in place of $n$.
Apply \eqref{H7.1} with $\Lambda = \Gamma_0$ and $\Q^n g$ for $n$  odd in place of $g$:
\[
\E^{\Gamma_0}[\lambda(\Q^n g)] = \ent_{\E^{\Gamma_0}}(\Q^n g) + \lambda(\E^{\Gamma_0} \Q^n g),
\]
and, again from \eqref{H7.1} with $\Lambda = \Gamma_1$ and
$ \Q^n g$ for $n$  even in place of $g$,
\[
\E^{\Gamma_1}[\lambda(\Q^n g)] = \ent_{\E^{\Gamma_1}}(\Q^n g) + \lambda(\E^{\Gamma_1} \Q^n g).
\]
From the last two displays, for odd $n$ we get  
\begin{align*}
\E^{\Gamma_1}[\lambda(\Q^n g) ] = \E^{\Gamma_1}[\ent_{\E^{\Gamma_0}}(\Q^n g)]
+ \ent_{\E^{\Gamma_1}}(\E^{\Gamma_0} \Q^n g) + \lambda(\Q^{n+1} g)
\end{align*}
while for $n$ even  
\begin{align*}
\E^{\Gamma_0}[\lambda(\Q^n g)] = \E^{\Gamma_0}[\ent_{\E^{\Gamma_1}}(\Q^n g)]
+ \ent_{\E^{\Gamma_0}}(\E^{\Gamma_1} \Q^n g) + \lambda(\Q^{n+1} g).
\end{align*}
Using these, and  applying $\E^{\Gamma_0}$ or $\E^{\Gamma_1}$ to \eqref{H7.2} when $n$ is even or odd respectively, we readily obtain
\eqref{H7.2} with $n+1$ in place of $n$. This shows the
veracity of \eqref{H7.2}.
Using Lemma \ref{conv}, we have $\Q^n[\lambda(g)] \to \nu[\lambda(g)]$
and $\lambda^n(\Q^n g) \to \nu[g]$, $\nu$-a.e. From this
and Fatou's lemma, \eqref{H7.2} gives
\begin{align}
\ent_\nu(g) &\le \liminf_{n \to \infty}
\bigg\{
\nu\bigg[
\sum_{m=0,\ m\ \text{odd}}^{n-1} \Q^{n-m-1}  [\ent_{\E^{\Gamma_0}}(\Q^m g)]
+ \sum_{m=0,\ m\ \text{even}}^{n-1} \Q^{n-m-1} [\ent_{\E^{\Gamma_1}}(\E^{\Gamma_0}\Q^m g)]
\bigg]
\bigg\},        \nonumber
\\
&= \liminf_{n \to \infty}\bigg\{
\sum_{^{m=0,\ m\ \text{odd}} }^{n-1} \nu[\ent_{\E^{\Gamma_0}}(\Q^m g)]
+ \sum_{m=0,\ m\ \text{even}}^{n-1} \nu[\ent_{\E^{\Gamma_1}}(\E^{\Gamma_0}\Q^m g)]
\bigg\}\label{H7.3}
\end{align}
where we used the fact that $\nu$ is a Gibbs  measure to obtain
the last equality.
Let $g=f^2$ and   apply Proposition \ref{LSGamma0} to bound the entropy
 \begin{align*}
\nu[\ent_{\E^{\Gamma_0}}(\Q^{m} f^2)]
&\le  \tilde C\nu \|\nabla_{\Gamma_0} \sqrt{\Q^m f^2}\|^2
\le  \tilde C [C_1 C_2^{m-1}  \nu \|\nabla_{\Gamma_1} f\|^2 
+ \tilde CC_2^{m} \nu \|\nabla_{\Gamma_0} f\|^2 ]
\\
\nu[\ent_{\E^{\Gamma_1}}(\E^{\Gamma_0}\Q^{m} f^2)]
&\le  \tilde C \nu \|\nabla_{\Gamma_1} \sqrt{\E^{\Gamma_0}\Q^m f^2}\|^2
\le \tilde C [C_1 C_2^{m-1}  \nu \|\nabla_{\Gamma_0} f\|^2 
+ \tilde CC_2^{m} \nu \|\nabla_{\Gamma_1} f\|^2 ],
\end{align*}
for $m$ odd and even respectively,  where,  for the last inequalities we used Lemma \ref{sweepGam2} and induction.
Substituting in \eqref{H7.3}, we obtain (recall that $0< C_2 < 1$)
\[
\ent_\nu(f^2) \le 
\frac{ \tilde C (C_{1}C_{2}^{-1}+C_2)}{1-C_2} \nu \|\nabla_{\Gamma_1} f\|^2 
+ \frac{ \tilde C (C_{1}C_{2}^{-1}+C_2)}{1-C_{2}} \nu \|\nabla_{\Gamma_0} f\|^2 
\le \overline{C} \, \nu \|\nabla f\|^2,
\]
where $\overline C$ is the largest of the two coefficients.
This  is the log-Sobolev inequality for $\nu$. 
\end{proof}
 
\section{Example}\label{sectExample1}
We consider the Hamiltonian for a measure on the Heisenberg group defined as in (\ref{HGexample}). Theorem \ref{thmExample} follows from the main result presented in Theorem \ref{thmGENERAL}. Thus, we need to verify that the conditions of Theorem \ref{thmGENERAL} are satisfied for a local specification with a Hamiltonian as in (\ref{HGexample}).

At first, we need to verify that the main hypothesis, that  the  single site measure without interactions  (consisting only of the phase) 
$\mu(dx)=\frac{e^{-\phi(x)}dx}{\int e^{-\phi(x)} dx}$  satisfies
the log-Sobolev inequality.  In our example where $\phi(x)=\dd^{p}(x_i) +c\dd^{p-1}(x_i)cos(\dd(x_i)), p>2$,
as explained in the introduction in section \ref{introHeis}, this is true, since the family of measures (\ref{introUb2}) satisfies the log-Sobolev inequality, a result that has been  proven in \cite{H-Z}. Furthermore, hypothesis (\ref{H2.3}) and (\ref{H2.4}) about  the  Carnot-Carath\'eodory distance on the Heisenberg group $\He_1$ are true (see \cite{Mo} and \cite{H-Z}).    

At first one notices, that   for   convenience    the interaction potential can be written in the  following  equivalent form: 
\begin{equation}V(x,\omega)=  \delta \dd^{r }(x) +\sum _{k=1}^{r-1}a_k\dd^{r-k}(x) \dd^{k}(\omega) \label{eqform}\end{equation}where $a_k= \begin{pmatrix}r \\
k \\
\end{pmatrix}$ the  binomial    coefficients.

 For conditions (\ref{lowerH}) - (\ref{Hgeo}),  the first one easily follows from $\dd^{r}(xz)\leq 2^{r-1}\dd(x)+2^{r-1}\dd( z)$ for every $r\in\mathbb{N}$ and the specific form of $\phi$ and $V$. The second and third follows  from   $\dd (x^{-1})= \dd(x) $  and $\dd (\gamma (s))\leq \dd (z)$ for any geodesic from $0$ to $z$, both by the definition of the Carnot-Carath\'eodory  distance.

  Finally, conditions (\ref{H2.1})-(\ref{H2.2}) and (\ref{H2.5})-(\ref{H2.9}) can easily be verified for any $s=2p-2$ and $r\leq \frac{p+2}{2}$, if one writes     the interaction potential in the   form (\ref{eqform}).

\bibliography{mybibfile} 

\begin{thebibliography}{64}%
\makeatletter
\providecommand \@ifxundefined [1]{%
 \@ifx{#1\undefined}
}%
\providecommand \@ifnum [1]{%
 \ifnum #1\expandafter \@firstoftwo
 \else \expandafter \@secondoftwo
 \fi
}%
\providecommand \@ifx [1]{%
 \ifx #1\expandafter \@firstoftwo
 \else \expandafter \@secondoftwo
 \fi
}%
\providecommand \natexlab [1]{#1}%
\providecommand \enquote  [1]{``#1''}%
\providecommand \bibnamefont  [1]{#1}%
\providecommand \bibfnamefont [1]{#1}%
\providecommand \citenamefont [1]{#1}%
\providecommand \href@noop [0]{\@secondoftwo}%
\providecommand \href [0]{\begingroup \@sanitize@url \@href}%
\providecommand \@href[1]{\@@startlink{#1}\@@href}%
\providecommand \@@href[1]{\endgroup#1\@@endlink}%
\providecommand \@sanitize@url [0]{\catcode `\\12\catcode `\$12\catcode
  `\&12\catcode `\#12\catcode `\^12\catcode `\_12\catcode `\%12\relax}%
\providecommand \@@startlink[1]{}%
\providecommand \@@endlink[0]{}%
\providecommand \url  [0]{\begingroup\@sanitize@url \@url }%
\providecommand \@url [1]{\endgroup\@href {#1}{\urlprefix }}%
\providecommand \urlprefix  [0]{URL }%
\providecommand \Eprint [0]{\href }%
\providecommand \doibase [0]{http://dx.doi.org/}%
\providecommand \selectlanguage [0]{\@gobble}%
\providecommand \bibinfo  [0]{\@secondoftwo}%
\providecommand \bibfield  [0]{\@secondoftwo}%
\providecommand \translation [1]{[#1]}%
\providecommand \BibitemOpen [0]{}%
\providecommand \bibitemStop [0]{}%
\providecommand \bibitemNoStop [0]{.\EOS\space}%
\providecommand \EOS [0]{\spacefactor3000\relax}%
\providecommand \BibitemShut  [1]{\csname bibitem#1\endcsname}%
\let\auto@bib@innerbib\@empty
\bibitem [{\citenamefont {A.Guionnet}\ and\ \citenamefont
  {B.Zegarlinski}(2003)}]{G-Z}%
  \BibitemOpen
  \bibfield  {author} {\bibinfo {author} {\bibnamefont {A.Guionnet}}\ and\
  \bibinfo {author} {\bibnamefont {B.Zegarlinski}},\ }\bibfield  {title}
  {\enquote {\bibinfo {title} {Lectures on logarithmic sobolev inequalities},}\
  \ }(\bibinfo  {publisher} {Springer},\ \bibinfo {year} {2003})\ pp.\ \bibinfo
  {pages} {1--134}\BibitemShut {NoStop}%
\bibitem [{\citenamefont {Zegarlinski}(1996)}]{Z2}%
  \BibitemOpen
  \bibfield  {author} {\bibinfo {author} {\bibfnamefont {B.}~\bibnamefont
  {Zegarlinski}},\ }\bibfield  {title} {\enquote {\bibinfo {title} {The strong
  decay to equilibrium for the stochastic dynamics of unbounded spin systems on
  a lattice},}\ }\href@noop {} {\bibfield  {journal} {\bibinfo  {journal}
  {Comm. Math. Phys.}\ }\textbf {\bibinfo {volume} {175}},\ \bibinfo {pages}
  {401--432} (\bibinfo {year} {1996})}\BibitemShut {NoStop}%
\bibitem [{\citenamefont {Bakry}\ and\ \citenamefont {Emery}(1985)}]{B-E}%
  \BibitemOpen
  \bibfield  {author} {\bibinfo {author} {\bibfnamefont {D.}~\bibnamefont
  {Bakry}}\ and\ \bibinfo {author} {\bibfnamefont {M.}~\bibnamefont {Emery}},\
  }\bibfield  {title} {\enquote {\bibinfo {title} {Difusions
  hypercontractives},}\ }in\ \href@noop {} {\emph {\bibinfo {booktitle}
  {Seminaire de Probabilites XIX}}},\ \bibinfo {series} {Lecture Notes in
  Math.}, Vol.\ \bibinfo {volume} {1123}\ (\bibinfo  {publisher} {Springer},\
  \bibinfo {year} {1985})\ pp.\ \bibinfo {pages} {177--206}\BibitemShut
  {NoStop}%
\bibitem [{\citenamefont {N.Yoshida}(1999)}]{Y}%
  \BibitemOpen
  \bibfield  {author} {\bibinfo {author} {\bibnamefont {N.Yoshida}},\
  }\bibfield  {title} {\enquote {\bibinfo {title} {The log-sobolev inequality
  for weakly coupled lattice field},}\ }\href@noop {} {\bibfield  {journal}
  {\bibinfo  {journal} {Probab.Theor. Relat. Fields}\ }\textbf {\bibinfo
  {volume} {115}},\ \bibinfo {pages} {1--40} (\bibinfo {year}
  {1999})}\BibitemShut {NoStop}%
\bibitem [{\citenamefont {An\'e}\ \emph {et~al.}(2000)\citenamefont {An\'e},
  \citenamefont {Blach\`ere}, \citenamefont {Chafa\"i}, \citenamefont
  {Foug\`eres}, \citenamefont {Gentil}, \citenamefont {Malrieu}, \citenamefont
  {Roberto},\ and\ \citenamefont {Scheffer}}]{A-B-C}%
  \BibitemOpen
  \bibfield  {author} {\bibinfo {author} {\bibfnamefont {C.}~\bibnamefont
  {An\'e}}, \bibinfo {author} {\bibfnamefont {S.}~\bibnamefont {Blach\`ere}},
  \bibinfo {author} {\bibfnamefont {D.}~\bibnamefont {Chafa\"i}}, \bibinfo
  {author} {\bibfnamefont {P.}~\bibnamefont {Foug\`eres}}, \bibinfo {author}
  {\bibfnamefont {I.}~\bibnamefont {Gentil}}, \bibinfo {author} {\bibfnamefont
  {F.}~\bibnamefont {Malrieu}}, \bibinfo {author} {\bibfnamefont
  {C.}~\bibnamefont {Roberto}}, \ and\ \bibinfo {author} {\bibfnamefont
  {G.}~\bibnamefont {Scheffer}},\ }\bibfield  {title} {\enquote {\bibinfo
  {title} {Sur les in\'egalit\'es de sobolev logarithmiques},}\ \ }(\bibinfo
  {publisher} {Panoramas et Synth\`eses. Soc. Math},\ \bibinfo {year}
  {2000})\BibitemShut {NoStop}%
\bibitem [{\citenamefont {Bodineau}\ and\ \citenamefont {Helffer}(1999)}]{B-H}%
  \BibitemOpen
  \bibfield  {author} {\bibinfo {author} {\bibfnamefont {T.}~\bibnamefont
  {Bodineau}}\ and\ \bibinfo {author} {\bibfnamefont {B.}~\bibnamefont
  {Helffer}},\ }\bibfield  {title} {\enquote {\bibinfo {title} {Log-sobolev
  inequality for unbounded spin systems},}\ }\href@noop {} {\bibfield
  {journal} {\bibinfo  {journal} {J of Funct Analysis}\ }\textbf {\bibinfo
  {volume} {166}},\ \bibinfo {pages} {168--178} (\bibinfo {year}
  {1999})}\BibitemShut {NoStop}%
\bibitem [{\citenamefont {Ledoux}(2001)}]{Led}%
  \BibitemOpen
  \bibfield  {author} {\bibinfo {author} {\bibfnamefont {M.}~\bibnamefont
  {Ledoux}},\ }\bibfield  {title} {\enquote {\bibinfo {title} {Logarithmic
  sobolev inequalities for unbounded spin systems revisited.}}\ }\href@noop {}
  {\bibfield  {journal} {\bibinfo  {journal} {Seminaire de Probabilites XXXV.
  Lecture}\ } (\bibinfo {year} {2001})}\BibitemShut {NoStop}%
\bibitem [{\citenamefont {Helffer}(2002)}]{H}%
  \BibitemOpen
  \bibfield  {author} {\bibinfo {author} {\bibfnamefont {B.}~\bibnamefont
  {Helffer}},\ }\href@noop {} {\emph {\bibinfo {title} {Semiclassical Analysis,
  Witten Laplacians and Statistical Mechanics}}},\ Partial Differential
  Equations and Applications\ (\bibinfo  {publisher} {World Scientific,
  Singapore},\ \bibinfo {year} {2002})\BibitemShut {NoStop}%
\bibitem [{\citenamefont {Baudoin}\ and\ \citenamefont
  {Bonnefont}(2012)}]{BAUDOIN20122646}%
  \BibitemOpen
  \bibfield  {author} {\bibinfo {author} {\bibfnamefont {F.}~\bibnamefont
  {Baudoin}}\ and\ \bibinfo {author} {\bibfnamefont {M.}~\bibnamefont
  {Bonnefont}},\ }\bibfield  {title} {\enquote {\bibinfo {title} {Log-sobolev
  inequalities for subelliptic operators satisfying a generalized curvature
  dimension inequality},}\ }\href {\doibase
  https://doi.org/10.1016/j.jfa.2011.12.020} {\bibfield  {journal} {\bibinfo
  {journal} {Journal of Functional Analysis}\ }\textbf {\bibinfo {volume}
  {262}},\ \bibinfo {pages} {2646--2676} (\bibinfo {year} {2012})}\BibitemShut
  {NoStop}%
\bibitem [{\citenamefont {Gentil}\ and\ \citenamefont {Roberto}(2001)}]{G-R}%
  \BibitemOpen
  \bibfield  {author} {\bibinfo {author} {\bibfnamefont {I.}~\bibnamefont
  {Gentil}}\ and\ \bibinfo {author} {\bibfnamefont {C.}~\bibnamefont
  {Roberto}},\ }\bibfield  {title} {\enquote {\bibinfo {title} {Spectral gaps
  for spin systems: Some non-convex phase examples},}\ }\href@noop {}
  {\bibfield  {journal} {\bibinfo  {journal} {J. Func.Anal.}\ }\textbf
  {\bibinfo {volume} {180}},\ \bibinfo {pages} {66--84} (\bibinfo {year}
  {2001})}\BibitemShut {NoStop}%
\bibitem [{\citenamefont {Gentil}, \citenamefont {Guillin},\ and\ \citenamefont
  {Miclo}(2005)}]{Ge-Gu-M-05}%
  \BibitemOpen
  \bibfield  {author} {\bibinfo {author} {\bibfnamefont {I.}~\bibnamefont
  {Gentil}}, \bibinfo {author} {\bibfnamefont {A.}~\bibnamefont {Guillin}}, \
  and\ \bibinfo {author} {\bibfnamefont {L.}~\bibnamefont {Miclo}},\ }\bibfield
   {title} {\enquote {\bibinfo {title} {Modified logarithmic sobolev
  inequalities and transportation inequalities},}\ }\href@noop {} {\bibfield
  {journal} {\bibinfo  {journal} {Probab. Theory Relat. Fields}\ }\textbf
  {\bibinfo {volume} {133}},\ \bibinfo {pages} {409--436} (\bibinfo {year}
  {2005})}\BibitemShut {NoStop}%
\bibitem [{\citenamefont {Gentil}, \citenamefont {Guillin},\ and\ \citenamefont
  {Miclo}(2007)}]{Ge-Gu-M-07}%
  \BibitemOpen
  \bibfield  {author} {\bibinfo {author} {\bibfnamefont {I.}~\bibnamefont
  {Gentil}}, \bibinfo {author} {\bibfnamefont {A.}~\bibnamefont {Guillin}}, \
  and\ \bibinfo {author} {\bibfnamefont {L.}~\bibnamefont {Miclo}},\ }\bibfield
   {title} {\enquote {\bibinfo {title} {Modified logarithmic sobolev
  inequalities in null curvature},}\ }\href@noop {} {\bibfield  {journal}
  {\bibinfo  {journal} {Rev. Mat. Iberoamericana}\ }\textbf {\bibinfo {volume}
  {23}},\ \bibinfo {pages} {235--258} (\bibinfo {year} {2007})}\BibitemShut
  {NoStop}%
\bibitem [{\citenamefont {Gozlan}, \citenamefont {Roberto},\ and\ \citenamefont
  {Samson}(2013)}]{Go-Ro-Sa13}%
  \BibitemOpen
  \bibfield  {author} {\bibinfo {author} {\bibfnamefont {N.}~\bibnamefont
  {Gozlan}}, \bibinfo {author} {\bibfnamefont {C.}~\bibnamefont {Roberto}}, \
  and\ \bibinfo {author} {\bibfnamefont {P.-M.}\ \bibnamefont {Samson}},\
  }\bibfield  {title} {\enquote {\bibinfo {title} {Characterization of
  talagrand's transport-enropy inequalities im metric spaces},}\ }\href
  {http://www.jstor.org/stable/42919798} {\bibfield  {journal} {\bibinfo
  {journal} {The Annals of Probability}\ }\textbf {\bibinfo {volume} {41}},\
  \bibinfo {pages} {3112--3139} (\bibinfo {year} {2013})}\BibitemShut {NoStop}%
\bibitem [{\citenamefont {Barki}\ \emph {et~al.}(2023)\citenamefont {Barki},
  \citenamefont {G.Bobkov}, \citenamefont {Dagher},\ and\ \citenamefont
  {Roberto}}]{B-B-D-R24}%
  \BibitemOpen
  \bibfield  {author} {\bibinfo {author} {\bibfnamefont {A.}~\bibnamefont
  {Barki}}, \bibinfo {author} {\bibfnamefont {S.}~\bibnamefont {G.Bobkov}},
  \bibinfo {author} {\bibfnamefont {E.~B.}\ \bibnamefont {Dagher}}, \ and\
  \bibinfo {author} {\bibfnamefont {C.}~\bibnamefont {Roberto}},\ }\bibfield
  {title} {\enquote {\bibinfo {title} {Exponential inequalities in probability
  spaces revisited},}\ }\href@noop {} {\bibfield  {journal} {\bibinfo
  {journal} {arXiv:2306.00209}\ } (\bibinfo {year} {2023})}\BibitemShut
  {NoStop}%
\bibitem [{\citenamefont {Papageorgiou}(2011{\natexlab{a}})}]{Pa5}%
  \BibitemOpen
  \bibfield  {author} {\bibinfo {author} {\bibfnamefont {I.}~\bibnamefont
  {Papageorgiou}},\ }\bibfield  {title} {\enquote {\bibinfo {title} {A note on
  the modified log-sobolev inequality.}}\ }\href@noop {} {\bibfield  {journal}
  {\bibinfo  {journal} {J. Potential Anal.}\ }\textbf {\bibinfo {volume}
  {35}},\ \bibinfo {pages} {275--286} (\bibinfo {year}
  {2011}{\natexlab{a}})}\BibitemShut {NoStop}%
\bibitem [{\citenamefont {Balogh}\ \emph {et~al.}(2012)\citenamefont {Balogh},
  \citenamefont {Engulatov}, \citenamefont {Hunziker},\ and\ \citenamefont
  {Maasalo}}]{Ba-En-Hu-Ma12}%
  \BibitemOpen
  \bibfield  {author} {\bibinfo {author} {\bibfnamefont {Z.}~\bibnamefont
  {Balogh}}, \bibinfo {author} {\bibfnamefont {A.}~\bibnamefont {Engulatov}},
  \bibinfo {author} {\bibfnamefont {L.}~\bibnamefont {Hunziker}}, \ and\
  \bibinfo {author} {\bibfnamefont {O.~E.}\ \bibnamefont {Maasalo}},\
  }\bibfield  {title} {\enquote {\bibinfo {title} {Functional inequalities and
  hamilton–jacobi equations in geodesic spaces},}\ }\href@noop {} {\bibfield
  {journal} {\bibinfo  {journal} {Potential Anal}\ }\textbf {\bibinfo {volume}
  {36}},\ \bibinfo {pages} {401--432} (\bibinfo {year} {2012})}\BibitemShut
  {NoStop}%
\bibitem [{\citenamefont {Balogh}, \citenamefont {Don},\ and\ \citenamefont
  {Krist\'aly}(2024)}]{Ba-Do-Kr24}%
  \BibitemOpen
  \bibfield  {author} {\bibinfo {author} {\bibfnamefont {Z.~M.}\ \bibnamefont
  {Balogh}}, \bibinfo {author} {\bibfnamefont {S.}~\bibnamefont {Don}}, \ and\
  \bibinfo {author} {\bibfnamefont {A.}~\bibnamefont {Krist\'aly}},\ }\bibfield
   {title} {\enquote {\bibinfo {title} {Sharp weighted log-sobolev
  inequalities: Characterization of equality cases and applications},}\
  }\href@noop {} {\bibfield  {journal} {\bibinfo  {journal} {Trans. Amer. Math.
  Soc.}\ }\textbf {\bibinfo {volume} {377}},\ \bibinfo {pages} {5129--5163}
  (\bibinfo {year} {2024})}\BibitemShut {NoStop}%
\bibitem [{\citenamefont {Balogh}, \citenamefont {Krist\'aly},\ and\
  \citenamefont {Tripaldi}(2024)}]{BALOGH2024110217}%
  \BibitemOpen
  \bibfield  {author} {\bibinfo {author} {\bibfnamefont {Z.~M.}\ \bibnamefont
  {Balogh}}, \bibinfo {author} {\bibfnamefont {A.}~\bibnamefont {Krist\'aly}},
  \ and\ \bibinfo {author} {\bibfnamefont {F.}~\bibnamefont {Tripaldi}},\
  }\bibfield  {title} {\enquote {\bibinfo {title} {Sharp log-sobolev
  inequalities in cd(0,n) spaces with applications},}\ }\href {\doibase
  https://doi.org/10.1016/j.jfa.2023.110217} {\bibfield  {journal} {\bibinfo
  {journal} {Journal of Functional Analysis}\ }\textbf {\bibinfo {volume}
  {286}},\ \bibinfo {pages} {110217} (\bibinfo {year} {2024})}\BibitemShut
  {NoStop}%
\bibitem [{\citenamefont {Feng}\ and\ \citenamefont
  {Li}(2023{\natexlab{a}})}]{Feng-Li23a}%
  \BibitemOpen
  \bibfield  {author} {\bibinfo {author} {\bibfnamefont {Q.}~\bibnamefont
  {Feng}}\ and\ \bibinfo {author} {\bibfnamefont {W.}~\bibnamefont {Li}},\
  }\bibfield  {title} {\enquote {\bibinfo {title} {Hypoelliptic entropy
  dissipation for stochastic differential equations},}\ }\href@noop {}
  {\bibfield  {journal} {\bibinfo  {journal} {arXiv:2102.00544}\ } (\bibinfo
  {year} {2023}{\natexlab{a}})}\BibitemShut {NoStop}%
\bibitem [{\citenamefont {Feng}\ and\ \citenamefont
  {Li}(2023{\natexlab{b}})}]{Feng-Li23b}%
  \BibitemOpen
  \bibfield  {author} {\bibinfo {author} {\bibfnamefont {Q.}~\bibnamefont
  {Feng}}\ and\ \bibinfo {author} {\bibfnamefont {W.}~\bibnamefont {Li}},\
  }\bibfield  {title} {\enquote {\bibinfo {title} {Entropy dissipation for
  degenerate stochastic differential equations via sub-riemannian density
  manifold},}\ }\href@noop {} {\bibfield  {journal} {\bibinfo  {journal}
  {arXiv:1910.07480}\ } (\bibinfo {year} {2023}{\natexlab{b}})}\BibitemShut
  {NoStop}%
\bibitem [{\citenamefont {Read}, \citenamefont {Zegarlinski},\ and\
  \citenamefont {Zhang}(2022)}]{Re-Ze-Zh22}%
  \BibitemOpen
  \bibfield  {author} {\bibinfo {author} {\bibfnamefont {L.}~\bibnamefont
  {Read}}, \bibinfo {author} {\bibfnamefont {B.}~\bibnamefont {Zegarlinski}}, \
  and\ \bibinfo {author} {\bibfnamefont {M.}~\bibnamefont {Zhang}},\ }\bibfield
   {title} {\enquote {\bibinfo {title} {Logarithmic schrödinger equations in
  infinite dimensions},}\ }\href@noop {} {\bibfield  {journal} {\bibinfo
  {journal} {J. Math. Phys.}\ }\textbf {\bibinfo {volume} {63}} (\bibinfo
  {year} {2022})}\BibitemShut {NoStop}%
\bibitem [{\citenamefont {Bobkov}\ and\ \citenamefont {Gotze}(1999)}]{B-G}%
  \BibitemOpen
  \bibfield  {author} {\bibinfo {author} {\bibfnamefont {S.}~\bibnamefont
  {Bobkov}}\ and\ \bibinfo {author} {\bibfnamefont {F.}~\bibnamefont {Gotze}},\
  }\bibfield  {title} {\enquote {\bibinfo {title} {Exponential integrability
  and transportation cost related to logarithmic sobolev inequalities},}\
  }\href@noop {} {\bibfield  {journal} {\bibinfo  {journal} {J of Funct
  Analysis}\ }\textbf {\bibinfo {volume} {163}},\ \bibinfo {pages} {1--28}
  (\bibinfo {year} {1999})}\BibitemShut {NoStop}%
\bibitem [{\citenamefont {Bobkov}\ and\ \citenamefont
  {Zegarlinski}(2005)}]{B-Z}%
  \BibitemOpen
  \bibfield  {author} {\bibinfo {author} {\bibfnamefont {S.}~\bibnamefont
  {Bobkov}}\ and\ \bibinfo {author} {\bibfnamefont {B.}~\bibnamefont
  {Zegarlinski}},\ }\bibfield  {title} {\enquote {\bibinfo {title} {Entropy
  bounds and isoperimetry.}}\ }\href@noop {} {\bibfield  {journal} {\bibinfo
  {journal} {Memoirs of the American Mathematical Society}\ }\textbf {\bibinfo
  {volume} {176}},\ \bibinfo {pages} {1--69} (\bibinfo {year}
  {2005})}\BibitemShut {NoStop}%
\bibitem [{\citenamefont {Roberto}\ and\ \citenamefont
  {Zegarlinski}(2007)}]{R-Z}%
  \BibitemOpen
  \bibfield  {author} {\bibinfo {author} {\bibfnamefont {C.}~\bibnamefont
  {Roberto}}\ and\ \bibinfo {author} {\bibfnamefont {B.}~\bibnamefont
  {Zegarlinski}},\ }\bibfield  {title} {\enquote {\bibinfo {title}
  {Orlicz-sobolev inequalities for sub-gaussian measures and ergodicity of
  markov semi-groups},}\ }\href@noop {} {\bibfield  {journal} {\bibinfo
  {journal} {J. Func.Anal.}\ }\textbf {\bibinfo {volume} {243}},\ \bibinfo
  {pages} {28--66} (\bibinfo {year} {2007})}\BibitemShut {NoStop}%
\bibitem [{\citenamefont {Zegarlinski}(1990{\natexlab{a}})}]{Z1}%
  \BibitemOpen
  \bibfield  {author} {\bibinfo {author} {\bibfnamefont {B.}~\bibnamefont
  {Zegarlinski}},\ }\bibfield  {title} {\enquote {\bibinfo {title} {On
  log-sobolev inequalities for infinite lattice systems},}\ }\href@noop {}
  {\bibfield  {journal} {\bibinfo  {journal} {Lett. Math. Phys.}\ }\textbf
  {\bibinfo {volume} {20}},\ \bibinfo {pages} {173--182} (\bibinfo {year}
  {1990}{\natexlab{a}})}\BibitemShut {NoStop}%
\bibitem [{\citenamefont {Marton}(2013)}]{M1}%
  \BibitemOpen
  \bibfield  {author} {\bibinfo {author} {\bibfnamefont {K.}~\bibnamefont
  {Marton}},\ }\bibfield  {title} {\enquote {\bibinfo {title} {An inequality
  for relative entropy and logarithmic sobolev inequalities in eucledian
  spaces.}}\ }\href@noop {} {\bibfield  {journal} {\bibinfo  {journal} {J.
  Func.Anal.}\ }\textbf {\bibinfo {volume} {264}},\ \bibinfo {pages} {34--61}
  (\bibinfo {year} {2013})}\BibitemShut {NoStop}%
\bibitem [{\citenamefont {Inglis}\ and\ \citenamefont
  {Papageorgiou}(2009)}]{I-P}%
  \BibitemOpen
  \bibfield  {author} {\bibinfo {author} {\bibfnamefont {J.}~\bibnamefont
  {Inglis}}\ and\ \bibinfo {author} {\bibfnamefont {I.}~\bibnamefont
  {Papageorgiou}},\ }\bibfield  {title} {\enquote {\bibinfo {title}
  {Logarithmic sobolev inequalities for infinite dimensional h\"ormander type
  generators on the heisenberg group},}\ }\href@noop {} {\bibfield  {journal}
  {\bibinfo  {journal} {Potential Anal.}\ }\textbf {\bibinfo {volume} {31}},\
  \bibinfo {pages} {79--102} (\bibinfo {year} {2009})}\BibitemShut {NoStop}%
\bibitem [{\citenamefont {Otto}\ and\ \citenamefont {Reznikoff}(2007)}]{O-R}%
  \BibitemOpen
  \bibfield  {author} {\bibinfo {author} {\bibfnamefont {F.}~\bibnamefont
  {Otto}}\ and\ \bibinfo {author} {\bibfnamefont {M.}~\bibnamefont
  {Reznikoff}},\ }\bibfield  {title} {\enquote {\bibinfo {title} {A new
  criterion for the logarithmic sobolev inequality and two applications},}\
  }\href@noop {} {\bibfield  {journal} {\bibinfo  {journal} {J. Func. Anal.}\
  }\textbf {\bibinfo {volume} {243}},\ \bibinfo {pages} {121--157} (\bibinfo
  {year} {2007})}\BibitemShut {NoStop}%
\bibitem [{\citenamefont {Papageorgiou}(2018)}]{Pa3}%
  \BibitemOpen
  \bibfield  {author} {\bibinfo {author} {\bibfnamefont {I.}~\bibnamefont
  {Papageorgiou}},\ }\bibfield  {title} {\enquote {\bibinfo {title} {The
  log-sobolev inequality with quadratic interactions.}}\ }\href@noop {}
  {\bibfield  {journal} {\bibinfo  {journal} {J. Math. Phys.}\ }\textbf
  {\bibinfo {volume} {59}} (\bibinfo {year} {2018})}\BibitemShut {NoStop}%
\bibitem [{\citenamefont {Papageorgiou}(2014)}]{Pa2}%
  \BibitemOpen
  \bibfield  {author} {\bibinfo {author} {\bibfnamefont {I.}~\bibnamefont
  {Papageorgiou}},\ }\bibfield  {title} {\enquote {\bibinfo {title} {The
  logarithmic sobolev inequality for gibbs measures on infinite product of
  heisenberg groups.}}\ }\href@noop {} {\bibfield  {journal} {\bibinfo
  {journal} {Markov Proc. Related Fields}\ }\textbf {\bibinfo {volume} {20}},\
  \bibinfo {pages} {705--749} (\bibinfo {year} {2014})}\BibitemShut {NoStop}%
\bibitem [{\citenamefont {Inglis}\ and\ \citenamefont
  {Papageorgiou}(2019)}]{I-P1}%
  \BibitemOpen
  \bibfield  {author} {\bibinfo {author} {\bibfnamefont {J.}~\bibnamefont
  {Inglis}}\ and\ \bibinfo {author} {\bibfnamefont {I.}~\bibnamefont
  {Papageorgiou}},\ }\bibfield  {title} {\enquote {\bibinfo {title}
  {Log-sobolev inequalities for infinite-dimensional gibbs measures with
  non-quadratic interactions.}}\ }\href@noop {} {\bibfield  {journal} {\bibinfo
   {journal} {Markov Proc. Related Fields.}\ }\textbf {\bibinfo {volume}
  {25}},\ \bibinfo {pages} {879--898} (\bibinfo {year} {2019})}\BibitemShut
  {NoStop}%
\bibitem [{\citenamefont {Papageorgiou}(2011{\natexlab{b}})}]{Pa4}%
  \BibitemOpen
  \bibfield  {author} {\bibinfo {author} {\bibfnamefont {I.}~\bibnamefont
  {Papageorgiou}},\ }\bibfield  {title} {\enquote {\bibinfo {title}
  {Concentration inequalities for gibbs measures.}}\ }\href@noop {} {\bibfield
  {journal} {\bibinfo  {journal} {Infin. Dimens. Anal. Quantum Probab. Relat.}\
  }\textbf {\bibinfo {volume} {14}},\ \bibinfo {pages} {79--104} (\bibinfo
  {year} {2011}{\natexlab{b}})}\BibitemShut {NoStop}%
\bibitem [{\citenamefont {Limmer}\ \emph {et~al.}(2024)\citenamefont {Limmer},
  \citenamefont {Kratsios}, \citenamefont {Yang}, \citenamefont {Saqur},\ and\
  \citenamefont {Horvath}}]{LKYSH24}%
  \BibitemOpen
  \bibfield  {author} {\bibinfo {author} {\bibfnamefont {Y.}~\bibnamefont
  {Limmer}}, \bibinfo {author} {\bibfnamefont {A.}~\bibnamefont {Kratsios}},
  \bibinfo {author} {\bibfnamefont {X.}~\bibnamefont {Yang}}, \bibinfo {author}
  {\bibfnamefont {R.}~\bibnamefont {Saqur}}, \ and\ \bibinfo {author}
  {\bibfnamefont {B.}~\bibnamefont {Horvath}},\ }\bibfield  {title} {\enquote
  {\bibinfo {title} {Reality only happens once: Single-path generalization
  bounds for transformers},}\ }\href@noop {} {\bibfield  {journal} {\bibinfo
  {journal} {arXiv:2405.16563}\ } (\bibinfo {year} {2024})}\BibitemShut
  {NoStop}%
\bibitem [{\citenamefont {C.J.Preston}(1976)}]{Pr}%
  \BibitemOpen
  \bibfield  {author} {\bibinfo {author} {\bibnamefont {C.J.Preston}},\
  }\bibfield  {title} {\enquote {\bibinfo {title} {Random fields},}\ \
  }(\bibinfo  {publisher} {Springer},\ \bibinfo {year} {1976})\BibitemShut
  {NoStop}%
\bibitem [{\citenamefont {Dobrushin}(1968)}]{D}%
  \BibitemOpen
  \bibfield  {author} {\bibinfo {author} {\bibfnamefont {R.~L.}\ \bibnamefont
  {Dobrushin}},\ }\bibfield  {title} {\enquote {\bibinfo {title} {The problem
  of uniqueness of a gibbs random field and the problem of phase transition},}\
  }\href@noop {} {\bibfield  {journal} {\bibinfo  {journal} {Funct. Anal.
  Apl.}\ }\textbf {\bibinfo {volume} {2}},\ \bibinfo {pages} {302--312}
  (\bibinfo {year} {1968})}\BibitemShut {NoStop}%
\bibitem [{\citenamefont {Bellisard}\ and\ \citenamefont
  {Hoegn-Krohn}(1982)}]{B-HK}%
  \BibitemOpen
  \bibfield  {author} {\bibinfo {author} {\bibfnamefont {J.}~\bibnamefont
  {Bellisard}}\ and\ \bibinfo {author} {\bibfnamefont {R.}~\bibnamefont
  {Hoegn-Krohn}},\ }\bibfield  {title} {\enquote {\bibinfo {title} {Compactness
  and the maximal gibbs state for random fields on the lattice},}\ }\href@noop
  {} {\bibfield  {journal} {\bibinfo  {journal} {Commun. Math. Phys.}\ }\textbf
  {\bibinfo {volume} {84}},\ \bibinfo {pages} {297--327} (\bibinfo {year}
  {1982})}\BibitemShut {NoStop}%
\bibitem [{\citenamefont {Kontis}, \citenamefont {Ottobre},\ and\ \citenamefont
  {Zegarlinski}(2017)}]{KONTIS1750015}%
  \BibitemOpen
  \bibfield  {author} {\bibinfo {author} {\bibfnamefont {V.}~\bibnamefont
  {Kontis}}, \bibinfo {author} {\bibfnamefont {M.}~\bibnamefont {Ottobre}}, \
  and\ \bibinfo {author} {\bibfnamefont {B.}~\bibnamefont {Zegarlinski}},\
  }\bibfield  {title} {\enquote {\bibinfo {title} {Long- and short-time
  behaviour of hypocoercive-type operators in infinite dimensions: An analytic
  approach},}\ }\href@noop {} {\bibfield  {journal} {\bibinfo  {journal}
  {Infinite Dimensional Analysis, Quantum Probability and Related}\ }\textbf
  {\bibinfo {volume} {20}},\ \bibinfo {pages} {3173--3223} (\bibinfo {year}
  {2017})}\BibitemShut {NoStop}%
\bibitem [{\citenamefont {Kontis}, \citenamefont {Ottobre},\ and\ \citenamefont
  {Zegarlinski}(2016)}]{KONTIS20163173}%
  \BibitemOpen
  \bibfield  {author} {\bibinfo {author} {\bibfnamefont {V.}~\bibnamefont
  {Kontis}}, \bibinfo {author} {\bibfnamefont {M.}~\bibnamefont {Ottobre}}, \
  and\ \bibinfo {author} {\bibfnamefont {B.}~\bibnamefont {Zegarlinski}},\
  }\bibfield  {title} {\enquote {\bibinfo {title} {Markov semigroups with
  hypocoercive-type generator in infinite dimensions: Ergodicity and
  smoothing},}\ }\href {\doibase https://doi.org/10.1016/j.jfa.2016.02.005}
  {\bibfield  {journal} {\bibinfo  {journal} {Journal of Functional Analysis}\
  }\textbf {\bibinfo {volume} {270}},\ \bibinfo {pages} {3173--3223} (\bibinfo
  {year} {2016})}\BibitemShut {NoStop}%
\bibitem [{\citenamefont {Ługiewicz}\ and\ \citenamefont
  {Zegarlinski}(2007)}]{Lugiewicz2007CoerciveIF}%
  \BibitemOpen
  \bibfield  {author} {\bibinfo {author} {\bibfnamefont {P.}~\bibnamefont
  {Ługiewicz}}\ and\ \bibinfo {author} {\bibfnamefont {B.}~\bibnamefont
  {Zegarlinski}},\ }\bibfield  {title} {\enquote {\bibinfo {title} {Coercive
  inequalities for h{\"o}rmander type generators in infinite dimensions},}\
  }\href {https://api.semanticscholar.org/CorpusID:119729855} {\bibfield
  {journal} {\bibinfo  {journal} {Journal of Functional Analysis}\ }\textbf
  {\bibinfo {volume} {247}},\ \bibinfo {pages} {438--476} (\bibinfo {year}
  {2007})}\BibitemShut {NoStop}%
\bibitem [{\citenamefont {Gross}(1976)}]{G}%
  \BibitemOpen
  \bibfield  {author} {\bibinfo {author} {\bibfnamefont {L.}~\bibnamefont
  {Gross}},\ }\bibfield  {title} {\enquote {\bibinfo {title} {Logarithmic
  sobolev inequalities},}\ }\href@noop {} {\bibfield  {journal} {\bibinfo
  {journal} {Am. J. Math.}\ }\textbf {\bibinfo {volume} {97}},\ \bibinfo
  {pages} {1061--1083} (\bibinfo {year} {1976})}\BibitemShut {NoStop}%
\bibitem [{\citenamefont {A.~Bonfiglioni}\ and\ \citenamefont
  {Uguzzoni}(2007)}]{B-L-U}%
  \BibitemOpen
  \bibfield  {author} {\bibinfo {author} {\bibfnamefont {E.~L.}\ \bibnamefont
  {A.~Bonfiglioni}}\ and\ \bibinfo {author} {\bibfnamefont {F.}~\bibnamefont
  {Uguzzoni}},\ }\href@noop {} {\emph {\bibinfo {title} {Stratified Lie groups
  and Potential Theory for their Sub-Laplacians}}},\ Springer Monographs in
  Mathematics\ (\bibinfo  {publisher} {Springer, New York},\ \bibinfo {year}
  {2007})\BibitemShut {NoStop}%
\bibitem [{\citenamefont {W.Hebisch}\ and\ \citenamefont
  {B.Zegarlinski}(2010)}]{H-Z}%
  \BibitemOpen
  \bibfield  {author} {\bibinfo {author} {\bibnamefont {W.Hebisch}}\ and\
  \bibinfo {author} {\bibnamefont {B.Zegarlinski}},\ }\bibfield  {title}
  {\enquote {\bibinfo {title} {Coercive inequalities on metric measure
  spaces},}\ }\href@noop {} {\bibfield  {journal} {\bibinfo  {journal} {J.
  Func. Anal.}\ }\textbf {\bibinfo {volume} {258}},\ \bibinfo {pages}
  {814--851} (\bibinfo {year} {2010})}\BibitemShut {NoStop}%
\bibitem [{\citenamefont {Feng}\ and\ \citenamefont
  {Li}(2023{\natexlab{c}})}]{Fe-Li1}%
  \BibitemOpen
  \bibfield  {author} {\bibinfo {author} {\bibfnamefont {Q.}~\bibnamefont
  {Feng}}\ and\ \bibinfo {author} {\bibfnamefont {W.}~\bibnamefont {Li}},\
  }\bibfield  {title} {\enquote {\bibinfo {title} {Sub-riemannian ricci
  curvature via generalized gamma z calculus},}\ }\href@noop {} {\bibfield
  {journal} {\bibinfo  {journal} {arXiv:2004.01863}\ } (\bibinfo {year}
  {2023}{\natexlab{c}})}\BibitemShut {NoStop}%
\bibitem [{\citenamefont {Gordina}\ and\ \citenamefont
  {Luo}(2022)}]{Gor-Luo2022}%
  \BibitemOpen
  \bibfield  {author} {\bibinfo {author} {\bibfnamefont {M.}~\bibnamefont
  {Gordina}}\ and\ \bibinfo {author} {\bibfnamefont {L.}~\bibnamefont {Luo}},\
  }\bibfield  {title} {\enquote {\bibinfo {title} {Logarithmic sobolev
  inequalities on non-isotropic heisenberg groups},}\ }\href@noop {} {\bibfield
   {journal} {\bibinfo  {journal} {Journal of Functional Analysis}\ }\textbf
  {\bibinfo {volume} {283}} (\bibinfo {year} {2022})}\BibitemShut {NoStop}%
\bibitem [{\citenamefont {Chatzakou}, \citenamefont {Kassymov},\ and\
  \citenamefont {Ruzhansky}(2021)}]{Ch-Ka-Ru15652}%
  \BibitemOpen
  \bibfield  {author} {\bibinfo {author} {\bibfnamefont {M.}~\bibnamefont
  {Chatzakou}}, \bibinfo {author} {\bibfnamefont {A.}~\bibnamefont {Kassymov}},
  \ and\ \bibinfo {author} {\bibfnamefont {M.}~\bibnamefont {Ruzhansky}},\
  }\bibfield  {title} {\enquote {\bibinfo {title} {Logarithmic sobolev
  inequalities on lie groups.}}\ }\href@noop {} {\bibfield  {journal} {\bibinfo
   {journal} {arXiv:2106.15652}\ } (\bibinfo {year} {2021})}\BibitemShut
  {NoStop}%
\bibitem [{\citenamefont {Suguro}(2024)}]{Sug24}%
  \BibitemOpen
  \bibfield  {author} {\bibinfo {author} {\bibfnamefont {T.}~\bibnamefont
  {Suguro}},\ }\bibfield  {title} {\enquote {\bibinfo {title} {The logarithmic
  sobolev inequality on the heisenberg group and applications to the
  uncertainty inequality and heat equation.}}\ }\href@noop {} {\bibfield
  {journal} {\bibinfo  {journal} {J Inequal Appl}\ }\textbf {\bibinfo {volume}
  {99}} (\bibinfo {year} {2024})}\BibitemShut {NoStop}%
\bibitem [{\citenamefont {Bonnefont}, \citenamefont {Chafaï},\ and\
  \citenamefont {Herry}(2020)}]{Bo-Ch-He2020}%
  \BibitemOpen
  \bibfield  {author} {\bibinfo {author} {\bibfnamefont {M.}~\bibnamefont
  {Bonnefont}}, \bibinfo {author} {\bibfnamefont {D.}~\bibnamefont {Chafaï}},
  \ and\ \bibinfo {author} {\bibfnamefont {R.}~\bibnamefont {Herry}},\
  }\bibfield  {title} {\enquote {\bibinfo {title} {On logarithmic sobolev
  inequalities for the heat kernel on the heisenberg group},}\ }\href@noop {}
  {\bibfield  {journal} {\bibinfo  {journal} {Annales de la faculté des
  sciences de Toulouse Mathématiques}\ }\textbf {\bibinfo {volume} {29}},\
  \bibinfo {pages} {335--355} (\bibinfo {year} {2020})}\BibitemShut {NoStop}%
\bibitem [{\citenamefont {Feng}\ and\ \citenamefont
  {Li}(2023{\natexlab{d}})}]{Fe-Li}%
  \BibitemOpen
  \bibfield  {author} {\bibinfo {author} {\bibfnamefont {Q.}~\bibnamefont
  {Feng}}\ and\ \bibinfo {author} {\bibfnamefont {W.}~\bibnamefont {Li}},\
  }\bibfield  {title} {\enquote {\bibinfo {title} {Entropy dissipation for
  degenerate stochastic differential equations via sub-riemannian density
  manifold},}\ }\href@noop {} {\bibfield  {journal} {\bibinfo  {journal}
  {Entropy}\ }\textbf {\bibinfo {volume} {25}} (\bibinfo {year}
  {2023}{\natexlab{d}})}\BibitemShut {NoStop}%
\bibitem [{\citenamefont {Monto}(2007)}]{Mo}%
  \BibitemOpen
  \bibfield  {author} {\bibinfo {author} {\bibfnamefont {R.}~\bibnamefont
  {Monto}},\ }\bibfield  {title} {\enquote {\bibinfo {title} {Some properties
  of carnot-carath\'eodory balls in the heisenberg group.}}\ }\href@noop {}
  {\bibfield  {journal} {\bibinfo  {journal} {Rend. Mat. Acc. Lincei}\ }\textbf
  {\bibinfo {volume} {11}},\ \bibinfo {pages} {155--167} (\bibinfo {year}
  {2007})}\BibitemShut {NoStop}%
\bibitem [{\citenamefont {Papageorgiou}(2010)}]{Pa1}%
  \BibitemOpen
  \bibfield  {author} {\bibinfo {author} {\bibfnamefont {I.}~\bibnamefont
  {Papageorgiou}},\ }\bibfield  {title} {\enquote {\bibinfo {title} {The
  logarithmic sobolev inequality in infinite dimensions for unbounded spin
  systems on the lattice with non quadratic interactions},}\ }\href@noop {}
  {\bibfield  {journal} {\bibinfo  {journal} {Markov Proc. Related Fields}\
  }\textbf {\bibinfo {volume} {16}},\ \bibinfo {pages} {447--484} (\bibinfo
  {year} {2010})}\BibitemShut {NoStop}%
\bibitem [{\citenamefont {Won}(2021)}]{Won2021inf}%
  \BibitemOpen
  \bibfield  {author} {\bibinfo {author} {\bibfnamefont {Y.}~\bibnamefont
  {Won}},\ }\bibfield  {title} {\enquote {\bibinfo {title} {An l2-approximation
  method for construction andsmoothing estimates of markov semigroups
  forinteracting diffusion processes on a lattice},}\ }\href@noop {} {\bibfield
   {journal} {\bibinfo  {journal} {Infinite Dimensional Analysis, Quantum
  Probability and Related}\ }\textbf {\bibinfo {volume} {24}} (\bibinfo {year}
  {2021})}\BibitemShut {NoStop}%
\bibitem [{\citenamefont {Inglis}, \citenamefont {Kontis},\ and\ \citenamefont
  {Zegarliński}(2011)}]{INGLIS201176}%
  \BibitemOpen
  \bibfield  {author} {\bibinfo {author} {\bibfnamefont {J.}~\bibnamefont
  {Inglis}}, \bibinfo {author} {\bibfnamefont {V.}~\bibnamefont {Kontis}}, \
  and\ \bibinfo {author} {\bibfnamefont {B.}~\bibnamefont {Zegarliński}},\
  }\bibfield  {title} {\enquote {\bibinfo {title} {From u-bounds to
  isoperimetry with applications to h-type groups},}\ }\href {\doibase
  https://doi.org/10.1016/j.jfa.2010.08.003} {\bibfield  {journal} {\bibinfo
  {journal} {Journal of Functional Analysis}\ }\textbf {\bibinfo {volume}
  {260}},\ \bibinfo {pages} {76--116} (\bibinfo {year} {2011})}\BibitemShut
  {NoStop}%
\bibitem [{\citenamefont {Dagher}(2022)}]{Dag2022}%
  \BibitemOpen
  \bibfield  {author} {\bibinfo {author} {\bibfnamefont {E.}~\bibnamefont
  {Dagher}},\ }\bibfield  {title} {\enquote {\bibinfo {title} {Note on the
  q-logarithmic sobolev and p-talagrand inequalities on carnot groups.}}\
  }\href@noop {} {\bibfield  {journal} {\bibinfo  {journal} {Communications in
  Contemporary Mathematics}\ }\textbf {\bibinfo {volume} {6}} (\bibinfo {year}
  {2022})}\BibitemShut {NoStop}%
\bibitem [{\citenamefont {Dagher}\ and\ \citenamefont
  {Zegarlinski}(2021)}]{Da-Ze2021}%
  \BibitemOpen
  \bibfield  {author} {\bibinfo {author} {\bibfnamefont {E.}~\bibnamefont
  {Dagher}}\ and\ \bibinfo {author} {\bibfnamefont {B.}~\bibnamefont
  {Zegarlinski}},\ }\bibfield  {title} {\enquote {\bibinfo {title} {Coercive
  inequalities and u-bounds on step-two carnot groups.}}\ }\href@noop {}
  {\bibfield  {journal} {\bibinfo  {journal} {Potential Anal.}\ } (\bibinfo
  {year} {2021})}\BibitemShut {NoStop}%
\bibitem [{\citenamefont {Dagher}\ \emph {et~al.}(2022)\citenamefont {Dagher},
  \citenamefont {Qiu}, \citenamefont {Zegarlinski},\ and\ \citenamefont
  {Zhang}}]{Da-Qi-Ze22}%
  \BibitemOpen
  \bibfield  {author} {\bibinfo {author} {\bibfnamefont {E.}~\bibnamefont
  {Dagher}}, \bibinfo {author} {\bibfnamefont {Y.}~\bibnamefont {Qiu}},
  \bibinfo {author} {\bibfnamefont {B.}~\bibnamefont {Zegarlinski}}, \ and\
  \bibinfo {author} {\bibfnamefont {M.}~\bibnamefont {Zhang}},\ }\bibfield
  {title} {\enquote {\bibinfo {title} {Spectral gap inequalities on nilpotent
  lie groups in infinite dimensions.}}\ }\href@noop {} {\bibfield  {journal}
  {\bibinfo  {journal} {arXiv:2211.14188}\ } (\bibinfo {year}
  {2022})}\BibitemShut {NoStop}%
\bibitem [{\citenamefont {Qiu}(2024)}]{Qiu24}%
  \BibitemOpen
  \bibfield  {author} {\bibinfo {author} {\bibfnamefont {Y.~W.}\ \bibnamefont
  {Qiu}},\ }\bibfield  {title} {\enquote {\bibinfo {title} {Some
  super-poincaré inequalities for gaussian-like measures on stratified lie
  groups .}}\ }\href@noop {} {\bibfield  {journal} {\bibinfo  {journal}
  {arXiv:2404.00293}\ } (\bibinfo {year} {2024})}\BibitemShut {NoStop}%
\bibitem [{\citenamefont {Dagher}(2024)}]{Dag24}%
  \BibitemOpen
  \bibfield  {author} {\bibinfo {author} {\bibfnamefont {E.~B.}\ \bibnamefont
  {Dagher}},\ }\bibfield  {title} {\enquote {\bibinfo {title} {Coercive
  inequalities on step-two carnot groups},}\ }in\ \href@noop {} {\emph
  {\bibinfo {booktitle} {Women in Analysis and PDE}}},\ \bibinfo {editor}
  {edited by\ \bibinfo {editor} {\bibfnamefont {M.}~\bibnamefont {Chatzakou}},
  \bibinfo {editor} {\bibfnamefont {M.}~\bibnamefont {Ruzhansky}}, \ and\
  \bibinfo {editor} {\bibfnamefont {D.}~\bibnamefont {Stoeva}}}\ (\bibinfo
  {publisher} {Springer Nature Switzerland},\ \bibinfo {address} {Cham},\
  \bibinfo {year} {2024})\ pp.\ \bibinfo {pages} {69--76}\BibitemShut {NoStop}%
\bibitem [{\citenamefont {Chatzakou}, \citenamefont {Federico},\ and\
  \citenamefont {Zegarlinski}(2022)}]{Ch-Fe-Ze1}%
  \BibitemOpen
  \bibfield  {author} {\bibinfo {author} {\bibfnamefont {M.}~\bibnamefont
  {Chatzakou}}, \bibinfo {author} {\bibfnamefont {S.}~\bibnamefont {Federico}},
  \ and\ \bibinfo {author} {\bibfnamefont {B.}~\bibnamefont {Zegarlinski}},\
  }\bibfield  {title} {\enquote {\bibinfo {title} {Poincaré inequalities on
  carnot groups and spectral gap of schrödinger operators},}\ }\href@noop {}
  {\bibfield  {journal} {\bibinfo  {journal} {arXiv:2211.14188}\ } (\bibinfo
  {year} {2022})}\BibitemShut {NoStop}%
\bibitem [{\citenamefont {Chatzakou}, \citenamefont {Federico},\ and\
  \citenamefont {Zegarlinski}(2023)}]{Ch-Fe-Ze2}%
  \BibitemOpen
  \bibfield  {author} {\bibinfo {author} {\bibfnamefont {M.}~\bibnamefont
  {Chatzakou}}, \bibinfo {author} {\bibfnamefont {S.}~\bibnamefont {Federico}},
  \ and\ \bibinfo {author} {\bibfnamefont {B.}~\bibnamefont {Zegarlinski}},\
  }\bibfield  {title} {\enquote {\bibinfo {title} {q-poincare inequalities on
  carnot groups with filiform type lie algebra},}\ }\href@noop {} {\bibfield
  {journal} {\bibinfo  {journal} {arXiv:2007.04689}\ } (\bibinfo {year}
  {2023})}\BibitemShut {NoStop}%
\bibitem [{\citenamefont {Dragoni}, \citenamefont {Kontis},\ and\ \citenamefont
  {Zegarliński}(2012)}]{Dr-Ko-Ze12}%
  \BibitemOpen
  \bibfield  {author} {\bibinfo {author} {\bibfnamefont {F.}~\bibnamefont
  {Dragoni}}, \bibinfo {author} {\bibfnamefont {V.}~\bibnamefont {Kontis}}, \
  and\ \bibinfo {author} {\bibfnamefont {B.}~\bibnamefont {Zegarliński}},\
  }\bibfield  {title} {\enquote {\bibinfo {title} {Ergodicity of markov
  semigroups with hörmander type generators in infinite dimensions},}\
  }\href@noop {} {\bibfield  {journal} {\bibinfo  {journal} {Potential Anal}\
  }\textbf {\bibinfo {volume} {37}},\ \bibinfo {pages} {199–227} (\bibinfo
  {year} {2012})}\BibitemShut {NoStop}%
\bibitem [{\citenamefont {L.Saloff-Coste}(2002)}]{SC}%
  \BibitemOpen
  \bibfield  {author} {\bibinfo {author} {\bibnamefont {L.Saloff-Coste}},\
  }\bibfield  {title} {\enquote {\bibinfo {title} {Aspects of sobolev-type
  inequalities},}\ \ }(\bibinfo  {publisher} {Cambridge University Press,
  Cambridge},\ \bibinfo {year} {2002})\BibitemShut {NoStop}%
\bibitem [{\citenamefont {Varopoulos}, \citenamefont {Saloff-Coste},\ and\
  \citenamefont {Coulhon}(1992)}]{V-SC-C}%
  \BibitemOpen
  \bibfield  {author} {\bibinfo {author} {\bibfnamefont {N.~T.}\ \bibnamefont
  {Varopoulos}}, \bibinfo {author} {\bibfnamefont {L.}~\bibnamefont
  {Saloff-Coste}}, \ and\ \bibinfo {author} {\bibfnamefont {T.}~\bibnamefont
  {Coulhon}},\ }\href@noop {} {\emph {\bibinfo {title} {Analysis and Geometry
  on Groups}}},\ \bibinfo {series} {Tracts in Mathematics}, Vol.\ \bibinfo
  {volume} {100}\ (\bibinfo  {publisher} {Cambridge University Press,
  Cambridge},\ \bibinfo {year} {1992})\BibitemShut {NoStop}%
\bibitem [{\citenamefont {Zegarlinski}(1990{\natexlab{b}})}]{Zega1990}%
  \BibitemOpen
  \bibfield  {author} {\bibinfo {author} {\bibfnamefont {B.}~\bibnamefont
  {Zegarlinski}},\ }\bibfield  {title} {\enquote {\bibinfo {title} {Log-sobolev
  inequalities for infinite one dimensional lattice systems},}\ }\href@noop {}
  {\bibfield  {journal} {\bibinfo  {journal} {Communications in Mathematical
  Physics}\ }\textbf {\bibinfo {volume} {133}} (\bibinfo {year}
  {1990}{\natexlab{b}})}\BibitemShut {NoStop}%
\bibitem [{\citenamefont {Rothaus}(1985)}]{R}%
  \BibitemOpen
  \bibfield  {author} {\bibinfo {author} {\bibfnamefont {C.}~\bibnamefont
  {Rothaus}},\ }\bibfield  {title} {\enquote {\bibinfo {title} {Analytic
  inequalities, isoperimetric inequalities and logarithmic sobolev
  inequalities},}\ }\href@noop {} {\bibfield  {journal} {\bibinfo  {journal}
  {J. Func. Anal.}\ }\textbf {\bibinfo {volume} {64}},\ \bibinfo {pages}
  {296--313} (\bibinfo {year} {1985})}\BibitemShut {NoStop}%
\end{thebibliography}%

\end{document}